\DeclareMathOperator{\tr}{tr}
\DeclareMathOperator{\GC}{GC}
\DeclareMathOperator{\first}{I}
\DeclareMathOperator{\second}{II}
\DeclareMathOperator{\con}{conv}
\renewcommand{\epsilon}{\varepsilon}
\theoremstyle{definition}
\newtheorem{definition}{Definition}[section]
\newtheorem{re}[definition]{Remark}
\theoremstyle{plain}
\newtheorem{thm}[definition]{Theorem}
\newtheorem{lem}[definition]{Lemma}
\newtheorem{prop}[definition]{Proposition}
\newcommand{\R}{{\mathbb R}}
\newcommand{\N}{{\mathbb N}}
\newcommand{\E}{{\mathbb R}}
\newcommand{\MV}{{\mathcal V}}
\newcommand{\ME}{{\mathcal E}}
\newcommand{\MF}{{\mathcal F}}
\newcommand{\MC}{{\mathcal C}}
\newcommand{\MM}{{\mathcal M}}
\newcommand{\MS}{{\mathcal S}}
\renewcommand{\v}[1]{{\bm {#1}}}
\renewcommand{\emph}[1]{{\itshape{#1}}}
\title[Construction of continuum from a discrete surface]{Construction of continuum from a discrete surface \\
  by its iterated subdivisions}
\author{Motoko Kotani, Hisashi Naito and Chen Tao}
\address{
  M.~Kotani:
  Mathematical Institute,
  Tohoku University,
  Aoba, Sendai 980-8578, Japan
  and 
  AIMR
  Tohoku University,
  Aoba, Sendai 980-8577, Japan
}
\email{motoko.kotani.d3@tohoku.ac.jp}
\address{
  H.~Naito:
  Graduate School of Mathematics, 
  Nagoya University, 
  Chikusa, Nagoya 464-8602, Japan
}
\email{naito@math.nagoya-u.ac.jp}
\address{
  C.~Tao:
  School of Mathematics and Statistics, 
  Northeast Normal University, China
}
\email{taoc387@nenu.edu.cn}
\keywords{Discrete geometry, discrete curvature, convergence theory}
\subjclass[2010]{Primary~52C99, Secondary~53A05, 53C23, 65D17}
\thanks{Authors (KM and HN) were partially supported by JSPS KAKENHI Grants Numbers
JP17H06465, JP17H06466, and JP19K03488.
KM was partially supported by JST, CRESTW Grand Number JPMJCR17J4, Japan.
}
\begin{document}
\begin{abstract}
 Given a trivalent graph in the 3-dimensional Euclidean space, 
 we call it a discrete surface because it has a tangent space at each vertex determined by its neighbor vertices.
 To abstract a continuum object hidden in the discrete surface,
 we introduce a subdivision method by applying the Goldberg-Coxeter subdivision 
 and discuss the convergence of a sequence of discrete surfaces defined inductively by the subdivision.
 We also study the limit set as the continuum geometric object associated with the given discrete surface.
\end{abstract}
\maketitle
\section{Introduction}
\label{sec:introduction}
One of the important problems for discrete geometry in general, 
is to find a continuum associated with a given discrete object 
and compare their geometries. 
For a triangulation of a continuous surface, 
the continuum is the continuous surface itself (for example, see \cite{Hildebrandt-Polthier-Wardetzky}). 
A typical question is 
how geometric data of the triangulation converges to 
the corresponding geometric data 
of the continuous surface when meshes get finer.
What do we do with discrete objects with no obvious underlying continuum?
To address the issue, in the present paper, we study a discrete surface,
defined in \cite{Kotani-Naito-Omori} as a trivalent graph in $\R^3$.
We introduce a method to subdivide a given discrete surface $M$, 
discuss convergence of the sequence $\{ M_{i} \}$ of the iteratively subdivided discrete surfaces,
and find a continuous object as its limit  when there is no obvious underlying surface for $M$.
\par
Let us state this more precisely. 
Let $X=(V, E, F)$ be a trivalent topological surface graph, 
where $V$ denotes the set of vertices, 
and $E$ the set of edges.
We often identify a graph $X$ with the set $V$ of its vertices. 
Although $X$ is a one-dimensional object, 
it is convenient to consider a circuit, 
a closed simple curve without self-intersections, 
as a ``face'' of $X$.
Since we assume that $X$ is a surface graph, 
the notion of faces is well-defined, 
and $F$ denotes the set of faces.
An $n$-gonal face is 
$f=\{ v_0, \ldots, v_{n-1} \}$ with the ordered vertices $v_i \in V$ in the circuit of the length $n$. 
Let us denote by $F$ the set of faces in $X$. 
Two faces are said to be neighbored when they share a common edge.
For later use, we also introduce the notion of ``leaf''.
The set  of a face $f$ and its neighboring faces is called a leaf with a core face $f$ and is denoted by $L(f)$.
\par
Given a discrete surface $\Phi \colon X \to M = \Phi(X) \subset \R^3$,
where $X$ is a trivalent graph and  $\Phi$ is a piecewise linear map, 
we let $\MV$, $\ME$, $\MF$ be the image sets of $V$, $E$, $F$.
Note that throughout the paper we write $X$ for a topological graph 
and $M$ for a graph (discrete surface) realized in $\R^3$. 
\par
Let $\{X_{i} \}$ be a sequence of the Goldberg-Coxeter construction (GC-construction, for details see Section \ref{sec:3}) iteratively constructed from $X_{0} = X$. 
For a given $M_{i} = \Phi_{i}(X_{i})$, 
its subdivision $M_{i+1}$ is constructed iteratively by the following two steps:
\begin{enumerate}
\item
  Solving the Dirichlet energy minimizing equation for $X_{i+1}$ 
  with the boundary condition $\Phi_{i}(X_i)$, 
\item
  Replace $\Phi_{i}(X_{i})$ by the barycenter of its nearest neighbors, and rename it as $M_{i}$.
\end{enumerate}
More precisely, we do the process leafwise.
We call $\{ M_i \}$ \emph{a sequence of subdivisions of a discrete surface} $M$
and prove $\{ M_i \}$ forms a Cauchy sequence in the Hausdorff topology (Theorem \ref{cauchy})
and show the energy monotonicity formula (Theorem \ref{monotonicity}).
Note that the above subdivision method is a modification of what was introduced in \cite{Tao},
and we call this procedure the Goldberg-Coxeter subdivision (GC-subdivision).
\par
The limit of this Cauchy sequence 
$\mathcal {M}_{\infty} = \overline{\bigcup M_i}$ 
is divided into three kinds of sets:
\begin{equation*}
  \mathcal {M}_{\infty} = \MM_{\MF} \cup \MM_{\MV}  \cup \MM_{\MS}.
\end{equation*} 
The first two come from accumulating points of leafwise convergence and the third one appears from global accumulation.
Given a leaf with its center $\v{f}^{(i)}$, which is an $n$-gon in $M_{i}$, 
its GC-subdivision is an $n$-gon $\v{f}^{(i+1)}$ in $M_{i+1}$ and its neighboring $n$ hexagons (see Figure~\ref{fig-b}).
The first one is the set $\MM_{\MF}$ of accumulating points associated with each face in $M_i$.
We prove in Lemma \ref{lem-d}, 
for a fixed face $\v{f}^{(i)}$ in $M_i$,  
$\{ \v{f}^{(i)}_{k}  \in M_{i+k} \} $ with $\v{f}^{(i)}_0 = \v{f}^{(i)}$ form a converging sequence
and all vertices of $\{ \v{f}^{(i)}_k \}$ 
converge to the barycenter $\v{f}^{(i)}_{\infty}$ 
of the original face $\v{f}^{(i)}$ and also of all $\v{f}^{(i)}_k$. 
We call it an accumulating point associated with the face and put 
\begin{equation*}
  \MM_\MF \coloneqq \bigcup_i  \{ \v{f}^{(i)}_{\infty} \mid \text{ the barycenter of all faces } \v{f}^{(i)} \in M_{i} \}.
\end{equation*}
\par
The second one is the set of all vertices, replaced as in the above step,  i.e., 
\begin{math}
  \mathcal{M}_{\MV} = \bigcup_{i} M_i.
\end{math}
\par
The regularity of the limit set is not trivial at all, 
although we have the energy monotonicity formula (Theorem \ref{monotonicity}).
It seems a balancing condition plays an important role.
For example, when we take a \ce{C60}, a polygonal graph on the sphere, which does not satisfy the balancing condition, 
we obtain a pathological shape as the limit of its subdivisions (Section \ref{sec:numerical}).
\par
We also prove the convergence to a point in $\MM_{\MF}$ is of $C^1$ class 
in the sense that the corresponding normal vectors 
converge to a unique unit vector independent of the choice of converging sequence in $\MM_{\MF}$.
\par
The third one is the set $\MM_{\MS}$ consisting of the rest of the accumulating points.
We know little about $\MM_{\MS}$ in general, 
however, we prove an unbranched discrete surface does not have such $\MM_{\MS}$. 
\begin{thm}[Theorem \ref{cauchy} and Theorem \ref{thm-f}]
  A sequence $\{M_i\}$ of iteratively subdivided discrete surface constructed 
  from a discrete surface $M$ forms a Cauchy sequence in the Hausdorff topology. 
  The limit set $\MM_{\infty}$ consists of $\MM_{\MV}$, $\MM_{\MF}$ and $\MM_{\MS}$.
  When $M$ is unbranched, $\MM_{\MS} $ is empty.
\end{thm}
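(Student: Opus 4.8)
The plan is to establish the three assertions in turn, the first two being structural consequences of the leafwise analysis and the third carrying the genuine content. For the Cauchy property I would deduce everything from a geometric decay estimate on consecutive subdivisions. Writing $\delta_i := \max\{\operatorname{diam}(\v{f}) : \v{f} \in \MF_i\}$ for the largest face diameter of $M_i$, the leafwise convergence of Lemma~\ref{lem-d}, combined with the fact that one GC-subdivision replaces each face by a strictly smaller core face surrounded by hexagons while the two averaging steps (Dirichlet minimization and barycentric replacement) are non-expanding, should give a uniform bound $\delta_i \le C\rho^i$ with $\rho \in (0,1)$. Since every vertex created in passing from $M_i$ to $M_{i+1}$ lies in the convex hull of a single leaf of $M_i$, and every retained vertex is displaced by the barycentric step by at most a fixed multiple of $\delta_i$, both one-sided Hausdorff distances are controlled by $\delta_i$, so $d_H(M_i, M_{i+1}) \le C'\rho^i$. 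The triangle inequality for the Hausdorff metric then gives, for $j > i$,
\begin{equation*}
  d_H(M_i, M_j) \le \sum_{k=i}^{j-1} d_H(M_k, M_{k+1}) \le \frac{C'\rho^i}{1-\rho},
\end{equation*}
whence $\{M_i\}$ is Cauchy and $\MM_\infty = \overline{\cup_i M_i}$ is a well-defined compact set.

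Once $\MM_\infty$ exists, the decomposition $\MM_\infty = \MM_\MV \cup \MM_\MF \cup \MM_\MS$ is a matter of bookkeeping. By definition $\MM_\MV = \cup_i M_i \subseteq \MM_\infty$, and each barycenter $\v{f}^{(i)}_\infty$ is by Lemma~\ref{lem-d} a limit of vertices of the shrinking core faces, hence $\MM_\MF \subseteq \MM_\infty$ as well. Declaring $\MM_\MS := \MM_\infty \setminus (\MM_\MV \cup \MM_\MF)$ makes the decomposition hold tautologically, so the only substantive statement is that this remainder is empty when $M$ is un-branched.

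For that last assertion I would argue by contradiction. Suppose $p \in \MM_\MS$, so $p = \lim_k v_k$ with $v_k \in M_{i_k}$ and $i_k \to \infty$, while $p$ is neither a vertex of any $M_i$ nor one of the barycenters in $\MM_\MF$. Trace the subdivision ancestry of the face $\v{f}_k \ni v_k$ back toward level $0$. If some fixed level-$j$ face were an ancestor of infinitely many of the $\v{f}_k$, then applying the leafwise convergence of Lemma~\ref{lem-d} inside that face would force the corresponding subsequence, and hence $p$, to coincide with a barycenter of $\MM_\MF$ — contradicting $p \in \MM_\MS$. Therefore the ancestors of the $\v{f}_k$ must diverge in the combinatorial subdivision tree: the points $v_k$ originate from combinatorially arbitrarily distant regions of $M$ yet converge to the single point $p \in \R^3$. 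This is exactly the self-approach configuration excluded by the un-branched hypothesis, which I expect to supply a uniform geometric separation between combinatorially distant parts of the surface (equivalently, such a $p$ would be a branch point of the limit). The contradiction forces $\MM_\MS = \emptyset$.

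The main obstacle is this third step. Two points demand care. First, the decay $\delta_i \le C\rho^i$ underlying the Cauchy estimate must be made uniform over all faces and all levels rather than along a single tracked core face; the technical heart is controlling how the hexagonal children of adjacent leaves interact where the leafwise procedures overlap. Second, and more seriously, the qualitative un-branched hypothesis must be turned into a quantitative one: because face diameters shrink to zero, excluding accumulation of combinatorially distant faces requires a lower bound on their geometric separation that does not degenerate as $i_k \to \infty$. Proving such a uniform non-self-approach estimate — that an un-branched discrete surface together with all of its subdivisions stays uniformly embedded — is where the real difficulty lies.
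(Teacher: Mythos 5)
Your treatment of the first two assertions is essentially the paper's own: the paper likewise derives a geometric decay for $d_H(M_i,M_{i+1})$ and concludes with the same telescoping sum, and it defines $\MM_\MS$ as the residual set of accumulation points, so the decomposition $\MM_\infty = \MM_\MV \cup \MM_\MF \cup \MM_\MS$ is tautological there too. One technical remark: your diameter decay $\delta_i \le C\rho^i$ does not follow from Lemma \ref{lem-d}, which gives only the nesting $\v{f}^{(i+1)} \subset \con(\v{f}^{(i)})$ with no rate; the rate is exactly what Proposition \ref{prop-m} supplies ($E_D$ contracts by $\lambda_1(n) = 1/(1+4\sin^2(\pi/n)) < 1$ on each face, uniformly since $n$ is bounded), and the paper converts that energy decay into a Hausdorff bound via total edge length. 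So the Cauchy part of your argument is repairable by quoting Proposition \ref{prop-m} instead.

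The genuine gap is exactly where you located it, and it is not fillable as you stated it: un-branchedness does \emph{not} supply a uniform geometric separation between combinatorially distant parts of the surface, and the paper never proves such a statement. Un-branchedness is purely combinatorial (each edge lies on at most two faces), whereas the needed non-self-approach property is metric: nothing prevents an un-branched graph realized in $\R^3$ from having two combinatorially far-apart leaves arbitrarily close to each other, which is precisely how points of $\MM_\MS$ would arise. The paper's Theorem \ref{thm-f} resolves this by \emph{hypothesis}, not by proof: in addition to un-branchedness it assumes that any two faces meet in at most one edge and, crucially, that the convex hulls $\con(L(\v{f}_1))$ and $\con(L(\v{f}_2))$ of two leaves intersect only when $\v{f}_1 \sim \v{f}_2$ or the faces are joined by a connecting face. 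Under these assumptions it proves the nesting lemma $\bigcup_{\v{F} \in \MF(M_{i+1})} \con(L(\v{F})) \subset \bigcup_{\v{f} \in \MF(M_i)} \con(L(\v{f}))$ (using that $A(n)$ is a stochastic matrix and that the barycentric projection takes convex combinations), which propagates the separation hypothesis to every level; after that, an ancestry argument very close to yours goes through: a sequence $\v{x}_i \to \v{x}_\infty \notin \MM_\MV$ is forced to descend through core faces, $\v{x}_{i+1} \in \mathcal{A}\v{f}_i$, so $\v{x}_\infty$ is the barycenter limit and lies in $\MM_\MF$. In short, your step-three outline matches the paper's mechanism, but the ``uniform embeddedness'' estimate you deferred is not a consequence of the un-branched hypothesis — it must be (and in the paper is) added as an assumption; the introduction's summary ``when $M$ is un-branched, $\MM_\MS$ is empty'' is a loose paraphrase of the actual Theorem \ref{thm-f}.
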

The first statement of the above theorem was proved by the last author with a slightly different subdividing method in \cite{Tao}.
In the present paper, we propose a modified method
to further discuss
regularity/singularity in the limit of the sequence. 
We explain how we improve the original method for that purpose.
The original subdividing method defined in \cite{Tao} 
is given by solving the Dirichlet energy minimizing equation. 
It works well with a network all of whose vertices satisfy the balancing conditions but not with a network otherwise.  
This is because in the original subdividing method,
 an original vertex not satisfying the balancing condition 
and the newly generated nearest vertices are not co-planar and are forming singular points 
in the repeated process of subdivisions (see Figure~\ref{fig:planarity}).
In the modified subdividing method proposed in the present paper,
we add a smoothing step after solving the Dirichlet equation. 
Namely, we replace such a vertex by the barycenter of its nearest neighbor 
so that they are co-planar and release tensions caused by the procedure.
We also numerically calculate subdivisions of the Mackay crystal of type P
(satisfying the balancing condition) and \ce{C60} (does not satisfy it)
by both methods (see Figures~\ref{fig:planarity}, \ref{fig:c60:curvature}, and \ref{fig:mackay:curvature}).
\begin{thm}[Theorem \ref{monotonicity}]
  The total Dirichlet energy $E_D(M_i)$ is bounded when it is subjected to a finite domain at the initial stage $M_0$.
  Moreover it monotonically decreases if $M_0$ contains no $n$-gonal faces with $n > 6$ 
  and contains an $n$-gonal face with $n < 6$.
\end{thm}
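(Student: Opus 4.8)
The plan is to track the total Dirichlet energy $E_D(M)=\sum_{(u,v)\in E}|\Phi(u)-\Phi(v)|^2$ (up to the chosen edge weights) \emph{leafwise}, exploiting that the GC-subdivision is carried out leaf by leaf. Writing the passage $M_i \mapsto M_{i+1}$ as the composition of the two operations in the construction --- the harmonic extension (Step 1) and the barycentric averaging (Step 2) --- I would first record that each of these operations is energy non-increasing, and then estimate the remaining discrepancy, which is purely combinatorial and is localized at the non-hexagonal core faces.

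First I would establish monotonicity for the two sub-steps separately. For Step 1, the configuration on $X_{i+1}$ is by definition the minimizer of $E_D$ among all maps that agree with $\Phi_i$ on the coarse vertices $X_i \subset X_{i+1}$; hence its energy is bounded above by that of any competitor with the same boundary data, in particular by the energy of the piecewise-linear interpolant of $M_i$ on $X_{i+1}$. For Step 2, I would use that the barycentric replacement is the averaging operator $A = I - L$ associated with the normalized graph Laplacian $L$; since the spectrum of $L$ lies in $[0,2]$ one has $(1-\lambda)^2 \le 1$ there, and therefore $E_D(A\Phi) = \langle A\Phi, L A\Phi\rangle \le \langle \Phi, L\Phi\rangle = E_D(\Phi)$, with equality only on the harmonic part. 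Thus both sub-steps can only decrease energy, and any net increase must come from the refinement itself.

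For the boundedness statement I would compare each leaf to the flat hexagonal model. A hexagonal leaf ($n=6$) is fixed by the balancing condition, and because the unit-weight Dirichlet energy of a flat trivalent net is invariant under GC-refinement (the edge count scales by $k^2$ while each squared edge vector scales by $k^{-2}$), such leaves contribute no change. The only possible sources of an increase are the finitely many non-hexagonal core faces present in the finite domain $M_0$. For these I would bound the per-step increment of the leaf energy in terms of its boundary data and show, using the geometric contraction of the cores from Lemma~\ref{lem-d} (each core $\v f^{(i)}_k$ shrinks to its barycenter $\v f^{(i)}_\infty$), that these increments are summable. Combined with the fact that $\bigcup_i M_i$ stays in a bounded region (Theorem~\ref{cauchy}), this yields a uniform bound $E_D(M_i) \le E_D(M_0) + C$ with $C$ depending only on $M_0$.

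The monotonicity statement is the refinement of this analysis in which all increments acquire a definite sign. The key quantity is the leafwise energy change $\Delta_f = E_D^{L(f)}(M_{i+1}) - E_D^{L(f)}(M_i)$ at a core $n$-gon $f$: I would solve the discrete Dirichlet problem on the subdivided leaf (the $n$-gon surrounded by its $n$ hexagons) using its $n$-fold symmetry and compare with the flat hexagonal baseline, aiming to show that $\operatorname{sign}\Delta_f = \operatorname{sign}(n-6)$. Under the hypothesis that $M_0$ has no face with $n>6$, every $\Delta_f \le 0$, so $E_D(M_{i+1}) \le E_D(M_i)$; and the presence of at least one face with $n<6$ forces a strictly negative $\Delta_f$, giving strict decrease. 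The main obstacle --- which also controls the summability invoked for boundedness --- is precisely this local computation: solving the leaf Dirichlet problem in closed enough form to read off the sign of $\Delta_f$ as a function of $n$. I expect the cleanest route is to diagonalize the leaf Laplacian over the $\Z/n$ representation, identify $n=6$ as the neutral (scale-invariant) case, and use convexity in $n$ to fix the sign on either side.
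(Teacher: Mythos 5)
Your proposal has the right skeleton (localize at faces, identify $n=6$ as the neutral case, treat the finitely many non-hexagonal faces as decaying sources and sum a geometric series), but its two load-bearing claims are respectively false and deferred. The fatal one is the assertion that ``both sub-steps can only decrease energy.'' This cannot be true: the paper's Mackay-crystal example (Section \ref{sec:numerical}) has monotonically \emph{increasing} energy, which is exactly why the theorem asserts only boundedness in general. Your justification for Step 1 is where it breaks. Minimality of $\widetilde{\Phi}_{i+1}$ bounds $E_D(\widetilde{M}_{i+1})$ by the energy of a competitor evaluated on the \emph{refined} graph $X_{i+1}$, which has roughly four times as many edges as $X_i$; no competitor has energy comparable to $E_D(M_i)$. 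For instance, the natural ``interpolant'' that puts each inner vertex at its corresponding boundary vertex has energy $\sum_{\v{f}} E_D(\v{f}) = 2E_D(M_i)$, not $E_D(M_i)$. The entire difficulty of the theorem is this factor-of-two mismatch: in $M_i$ every edge is shared by two faces, so $E_D(M_i)=\tfrac12\sum_{\v{f}}E_D(\v{f})$, whereas every edge of $\widetilde{M}_{i+1}$ lies in exactly one subdivided face $\tilde{\v{f}}$, so $E_D(\widetilde{M}_{i+1})=\sum_{\v{f}}E_D(\tilde{\v{f}})$. The paper closes the gap with the sharp facewise estimate of Lemma \ref{lem-e}, $E_D(\tilde{\v{f}})\le\lambda_1(n)E_D(\v{f})$ with $\lambda_1(n)=1/(1+4\sin^2(\pi/n))$, so that the hexagons' factor $\lambda_1(6)=\tfrac12$ exactly cancels the double counting and each non-hexagonal face contributes at most $(\lambda_1(n)-\tfrac12)E_D(\v{f})$, which is negative precisely when $n<6$. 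Your flat-lattice scaling heuristic for hexagons is the informal shadow of $\lambda_1(6)=\tfrac12$, but it is not a proof for non-flat faces subdivided by the Dirichlet problem.

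Second, that spectral estimate is exactly what you postpone as ``the main obstacle,'' so the core of the proof is missing rather than done; and the form you aim for, $\operatorname{sign}\Delta_f=\operatorname{sign}(n-6)$, is both stronger than needed and not what one can get facewise --- the spectral method yields one-sided upper bounds, and for $n>6$ the actual change can have either sign depending on the geometry of $\v{f}$. Smaller points: leaves overlap (each face lies in several leaves), so your leafwise bookkeeping $E_D^{L(f)}$ needs a multiplicity correction that the paper's facewise split avoids; the summability needed for boundedness does not follow from Lemma \ref{lem-d} (convex-hull contraction) or from Theorem \ref{cauchy} (which controls positions, not energy), but from iterating the facewise bound to get $E_D(\v{f}^{(i)})\le\lambda_1(n)^{2i}E_D(\v{f}^{(0)})$ together with the fact that GC-subdivision preserves the number of non-hexagonal faces; and your Step 2 operator is not $I-L$ on the whole graph but a partial averaging at the old vertices only --- it is energy non-increasing because those vertices form an independent set and the barycenter minimizes each local quadratic, which is the argument the paper implicitly uses for $E_D(M_{i+1})\le E_D(\widetilde{M}_{i+1})$.
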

\par
A trivalent graph is said to be ``branched'' when an edge is shared by more than two faces.
We note that the condition ``unbranched'' is necessary for a graph to be considered as a ``surface'' is shown in Section \ref{section:k4}. 
The $K_{4}$-lattice is the triply periodic trivalent graph in $\R^3$ discovered by T.~Sunada \cite{Sunada}, 
which is the one of the two structures that satisfy the strong-isotropic property.  
The $K_{4}$-lattice is branched. Actually each edge is shared by 10 faces. 
The numerical computation shows each leaf of the $K_{4}$-lattice converges to a smooth leaf, 
but does not converge to the same leaf (see Figure~\ref{leaf}).


\section{Preliminaries}
\label{sec:2}
There are many approaches to formulate ``Discrete Surface Theory'' based on different motivations. 
In \cite{Kotani-Naito-Omori}, a discrete surface is defined as a trivalent graph in $\R^3$ 
so that the tangent space is assigned at each vertex as the unique plane 
determined by the three nearest neighbor vertices. 
We briefly review their discussions and results.
\subsection{Discrete surface in $\R^3$ and their curvatures}
Let $X=(V,E)$ be a trivalent topological graph, 
where $V$ denotes the set of vertices, $E$ denotes the set of edges. 
The origin and the terminus of an edge $e$ are denoted by $o(e)$ and $t(e)$, respectively. 
For any $v\in V$, $E_{v}$ refers to the set of edges that emerge from $v$. 
\par
It is convenient to introduce a notion of a ``face'' although $X$ is a discrete object. 
For a circuit, a closed simple curve without self-intersections, 
we define a \emph{face} $f$ as an ordered set 
$\{ v_0, \ldots, v_{n-1} \}$ of vertices in the circuit 
and the set $F$ of \emph{faces}.  
\par
Given a trivalent topological graph $X$, 
we define a \emph{discrete surface} $M$ in $\R^3$ 
by a piecewise linear map 
$\Phi\colon X\to\R^3$ with $M = \Phi(X)$. 
Here by ``piecewise linear'', 
we mean the image of each edge $e = (v_{0}, v_{1})$ 
is given by the line segment connecting two vertices  
$\Phi (v_{0})$ and $\Phi(v_{1})$. 
\begin{definition}[Discrete Surface]
  An injective piecewise linear realization 
  $\Phi\colon X \to \R^3$ 
  of a trivalent graph $X = (V,E)$ 
  is said to be a \textnormal{discrete surface} in $\R^3$, if
  \begin{enumerate}
  \item 
    for all $v\in V$ at least two elements of $\{\Phi(e) \mid e\in E_v\}$ 
    are linearly independent in $\R^3$,
  \item 
    $\Phi(X)$ is locally oriented, that is, the order of the three edges 
    is assigned to each vertex of $X$.
  \end{enumerate}
\end{definition}
Let 
$\Phi \colon X \to M=(\MV,\ME)\subseteq \R^3$ 
be a discrete surface, in which  
$\MV = \Phi(V)$ denotes the set of vertices of $M$,
$\ME = \Phi(E)$ denotes the set of edges of $M$ 
and 
$\MF$ denotes the set of polygonal faces of $M$. 
In particular, 
we do not assume the image of a face lies on a plane or a continuous surface.
\par
As will be seen, we consider $X$ and $M$ as discrete sets 
and often identify them with the sets of vertices $V$ and $\MV$, respectively.
\par
Let 
$\v{v}=\Phi(v)$ 
and 
$\v{e}=\Phi(e)$ 
be the corresponding vertex and edge in $M$ for $v\in V$ and $e \in E$. Let 
$E_{v}=\{e_1,e_2,e_3\}$ be the oriented edges at $v$ 
and let 
$v_i$ be the tail vertex of each $e_i$. 
The \emph{tangent plane} $T_{\v{v}}M$ 
is defined as the plane with $n(\v{v})$ as its \emph{unit normal vector} $n(\v{v})$ at $\v{v}\in M$ is given by
\begin{equation*}
  n(\v{v})=
  \frac{\v{e}_{1} \times \v{e}_{2} + \v{e}_{2} \times \v{e}_{3} + \v{e}_{3} \times \v{e}_{1}}
  {|\v{e}_{1} \times \v{e}_{2} + \v{e}_{2} \times \v{e}_{3} + \v{e}_{3} \times \v{e}_{1}|}, 
  \quad (\v{e}_i = \Phi(e_i) ).
\end{equation*}
It is perpendicular to the triangle $\triangle(\v{v}_{1},\v{v}_{2},\v{v}_{3})$ with $\v{v}_i = \Phi(v_i)$.
\par
The \emph{first} and \emph{second fundamental forms} of 
$\v{v}\in M$ are given by, 
respectively
\begin{equation*} 
  \begin{aligned}
    \first(\v{v})=
    &\begin{pmatrix}
      \langle \v{e}_{2} - \v{e}_{1}, \v{e}_{2} - \v{e}_{1} \rangle, 
      &\langle \v{e}_{2} - \v{e}_{1}, \v{e}_{3} - \v{e}_{1} \rangle\\
      \langle \v{e}_{3} - \v{e}_{1}, \v{e}_{2} - \v{e}_{1} \rangle, 
      &\langle \v{e}_{3} - \v{e}_{1}, \v{e}_{3} - \v{e}_{1} \rangle
    \end{pmatrix},\\   
    \second(\v{v})=
    &\begin{pmatrix}
      - \langle \v{e}_{2} - \v{e}_{1}, \v{n}_{2} - \v{n}_{1} \rangle, 
      & - \langle \v{e}_{2} - \v{e}_{1}, \v{n}_{3} - \v{n}_{1} \rangle \\
      - \langle \v{e}_{3} - \v{e}_{1}, \v{n}_{2} - \v{n}_{1} \rangle, 
      & - \langle \v{e}_{3} - \v{e}_{1}, \v{n}_{3} - \v{n}_{1} \rangle 
    \end{pmatrix}, 
  \end{aligned}
\end{equation*}
where $\v{n}_{i}=n(\v{v}_{i})$, $i=1,\,2,\,3$. 
Note that $\second(\v{v})$ is not necessarily symmetric.
\par
\begin{definition}[Curvatures]
  Let 
  $\Phi\colon X \to M$ be a discrete surface. 
  Then for each vertex $\v{v}\in M$, 
  the 
  \textnormal{Gauss curvature $K(\v{v})$} 
  and 
  \textnormal{mean curvature $H(\v{v})$} 
  are represented as follows, respectively
  \begin{equation}
    \begin{aligned}
      &K(\v{v}) = \det[\first(\v{v})^{-1}\second(\v{v})],\\
      &H(\v{v}) = \frac{1}{2} {\tr}[\first(\v{v})^{-1}\second(\v{v})].
    \end{aligned}
  \end{equation}
\end{definition}
\begin{definition}[Discrete Minimal Surface]
  A discrete surface $\Phi\colon X \to M$ 
  is called a \textnormal{discrete minimal surface} 
  if its mean curvature vanishes at each vertex.
\end{definition}
\subsection{Discrete harmonic and minimal surfaces}
Consider a trivalent graph $X$ with weight 
$m\colon E \to \R_{+}$, satisfying 
$m(e) = m(\bar{e})$, 
where $\bar{e}$ is the reverse edge of $e$. 
\par
Let $\Phi\colon X \to M$ be a discrete surface in $\E^3$. 
For a finite subgraph $X' = (V', E') \subset X$, 
we define 
the \emph{Dirichlet energy} 
$E_{D}(\Phi |_{X'})$ as the sum of square norm of all edges, 
i.e.,
\begin{equation*}
  E_{D}(\Phi |_{X'}) = \sum_{e\in E'} m(e)|\Phi(e)|^2.
\end{equation*} 
A realization of a graph $X$ 
that minimizes the Dirichlet energy defined above for arbitrary finite subgraphs 
is called a \emph{harmonic realization} \cite{Kotani-Sunada}
or an \emph{equilibrium placement} \cite{Delgado-Friedrichs}.
\begin{prop}[Harmonic Discrete Surface (\!\!{\cite[Definition 3.15]{Kotani-Naito-Omori}})]
  A discrete surface $\Phi\colon X  \to \R^3$ 
  is harmonic with respect to the weight $m$, when it satisfies 
  \begin{equation}\label{balancing}
    m(e_{v,1})\Phi(e_{v,1}) + m(e_{v,2})\Phi(e_{v,2}) + m(e_{v,3})\Phi(e_{v,3}) = 0,
  \end{equation}
  for any $v\in V$,  and $E_{v}=\{e_{v,1},e_{v,2},e_{v,3}\}$.
\end{prop}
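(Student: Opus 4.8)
The plan is to recognize this proposition as the Euler--Lagrange characterization of the Dirichlet energy minimizer and to prove it by a first-variation computation together with convexity. First I would unwind the definition: by the discussion preceding the statement, a harmonic realization minimizes $E_D(\Phi|_{X'})$ among all realizations that agree with $\Phi$ on the boundary of every finite subgraph $X' = (V', E')$. Since displacing a single vertex while fixing all others is itself such a finitely supported, admissible perturbation, it suffices to examine how the energy responds to moving one vertex $\v{v} = \Phi(v)$.

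Concretely, I would fix $v \in V$ with incident edges $E_v = \{e_{v,1}, e_{v,2}, e_{v,3}\}$ and terminal vertices $v_1, v_2, v_3$, and consider the variation $\Phi_t(v) = \Phi(v) + t\v{\xi}$ for $\v{\xi} \in \R^3$, leaving $\Phi$ unchanged elsewhere. Only the three edges incident to $v$ carry $t$-dependence, so expanding each summand $m(e_{v,i})\,|\Phi(v_i) - \Phi(v) - t\v{\xi}|^2$ and collecting the linear term gives
\begin{equation*}
  \frac{d}{dt}\bigg|_{t=0} E_D(\Phi_t|_{X'})
  = -2 \Big\langle \sum_{i=1}^{3} m(e_{v,i})\big(\Phi(v_i) - \Phi(v)\big),\ \v{\xi} \Big\rangle .
\end{equation*}
Because $\Phi(e_{v,i}) = \Phi(v_i) - \Phi(v)$ is exactly the edge vector, requiring this first variation to vanish for all $\v{\xi} \in \R^3$ yields precisely the balancing condition \eqref{balancing} at $v$. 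As $v$ is arbitrary, a harmonic realization must satisfy \eqref{balancing} at every vertex.

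For the converse I would invoke convexity: $E_D$ is a nonnegative quadratic form in the vertex coordinates, hence convex, so any critical point is a global minimizer of the restricted problem. Thus a realization satisfying \eqref{balancing} everywhere annihilates the first variation for every compactly supported perturbation and therefore minimizes $E_D(\Phi|_{X'})$ over each finite subgraph, which closes the equivalence. The one delicate point I anticipate is bookkeeping rather than substance: one must fix the orientation convention so that each unoriented edge is counted once (using $m(e) = m(\bar{e})$) and verify that the resulting sign and numerical factor are consistent, and---when $X$ is infinite---phrase the minimization as being among all finitely supported competitors with fixed boundary data, so that the single-vertex variation is genuinely admissible. Once these conventions are pinned down I expect no essential obstacle beyond this careful setup.
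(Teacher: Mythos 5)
Your proof is correct. One point of comparison is worth making: the paper itself gives no proof of this Proposition --- it is imported from \cite{Kotani-Naito-Omori}, and the citation to a \emph{Definition} (3.15) there signals that in the original source the balancing condition \eqref{balancing} is simply taken as the definition of a harmonic discrete surface. What your argument establishes is precisely the bridge that this paper leaves implicit: the equivalence between that pointwise condition and the variational notion stated just before the Proposition (a harmonic realization minimizes the Dirichlet energy on every finite subgraph with fixed boundary data). Your route --- vanishing of the single-vertex first variation forces \eqref{balancing} at each vertex, and convexity of the quadratic energy upgrades the critical-point property back to global minimality on each finite subgraph --- is the standard Euler--Lagrange argument, and your care with the orientation convention $m(e)=m(\bar{e})$ and with admissibility of finitely supported competitors addresses the only genuine bookkeeping subtleties. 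So the proposal is a complete and correct justification of a statement the paper asserts by citation alone.
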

The equation (\ref{balancing}) is called the \emph{balancing condition}, and plays an important role later on.
\begin{prop}[\!\!{\cite[Proposition 3.16]{Kotani-Naito-Omori}}]
  \label{prop-a}
  Let 
  $F\colon X \to \R^3$ be a discrete harmonic surface with respect to the weight $m$,  
  for $v\in V$ and $E_{v}=\{e_1,e_2,e_3\}$, 
  the Gauss curvature $K(\v{v})$  
  and 
  the mean curvature $H(\v{v})$ 
  are respectively given by
  \begin{equation*}
    \begin{aligned}
      K(\v{v})
      & = -\frac{m_1 + m_2 + m_3}{2A(x)^2} 
      \sum_{i,j,k} \frac{\langle \v{e}_{i},\v{n}_{j} \rangle \langle \v{e}_{j},\v{n}_{i}\rangle}{m_j},\\ 
      H(\v{v})
      & = \frac{m_1 + m_2 + m_3}{2A(x)^2}
      \sum_{i,j,k} \frac{\langle \v{e}_{i},\v{e}_{j}\rangle(\langle \v{e}_{i},\v{n}_{j}\rangle 
        + \langle \v{e}_{j},\v{n}_{i} \rangle)}{m_j},
    \end{aligned}
  \end{equation*}
  where $m_i=m(e_i)$ and $(i,\,j,\,k)$ are the permutations of $(1,\,2,\,3)$.
\end{prop}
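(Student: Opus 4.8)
The plan is to derive both formulas by a direct computation from the definitions $K(\v v)=\det[\first(\v v)^{-1}\second(\v v)]=\det\second/\det\first$ and $H(\v v)=\tfrac12\tr[\first(\v v)^{-1}\second(\v v)]$, deferring all use of harmonicity to a single symmetrization step at the end. First I would set $\v a=\v e_2-\v e_1$ and $\v b=\v e_3-\v e_1$, so that $\first(\v v)=\begin{pmatrix}|\v a|^2 & \langle\v a,\v b\rangle\\ \langle\v a,\v b\rangle & |\v b|^2\end{pmatrix}$ and, by the cofactor formula for the inverse of a $2\times2$ matrix, $\det\first=|\v a|^2|\v b|^2-\langle\v a,\v b\rangle^2=|\v a\times\v b|^2$. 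Since $\v a\times\v b=\v e_1\times\v e_2+\v e_2\times\v e_3+\v e_3\times\v e_1$, this is exactly the squared length of the vector defining $n(\v v)$, hence a fixed numerical multiple of $A(x)^2$; this pins down the denominators in both target formulas up to a constant.

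Next I would expand $\det[\first^{-1}\second]$ and $\tr[\first^{-1}\second]$ entrywise. Writing the entries of $\second(\v v)$ as $L=-\langle\v a,\v n_2-\v n_1\rangle$, $M=-\langle\v a,\v n_3-\v n_1\rangle$, $N=-\langle\v b,\v n_2-\v n_1\rangle$, $P=-\langle\v b,\v n_3-\v n_1\rangle$, and introducing the shorthands $a_{ij}=\langle\v e_i,\v n_j\rangle$ and $b_{ij}=\langle\v e_i,\v e_j\rangle$, the numerators $\det\second=LP-MN$ and $|\v b|^2 L+|\v a|^2 P-\langle\v a,\v b\rangle(M+N)$ become polynomials in the $a_{ij}$ and $b_{ij}$ in which the index $1$ is distinguished, since the fundamental forms are built from differences taken against $\v e_1$ and $\v n_1$.

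The decisive step is to eliminate this artificial asymmetry by means of the balancing condition (\ref{balancing}). Dotting $m_1\v e_1+m_2\v e_2+m_3\v e_3=0$ with $\v n_j$ and with $\v e_j$ yields, for each $j$, the linear relations $\sum_i m_i a_{ij}=0$ and $\sum_i m_i b_{ij}=0$. Substituting $a_{1j}=-(m_2a_{2j}+m_3a_{3j})/m_1$, and similarly for $b_{1j}$, into the two expanded numerators should collapse them onto the cyclic sums $\sum_{i,j,k}a_{ij}a_{ji}/m_j$ and $\sum_{i,j,k}b_{ij}(a_{ij}+a_{ji})/m_j$, while simultaneously generating the common prefactor $m_1+m_2+m_3$ and the sign appearing in the statement. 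I expect this symmetrization to be the main obstacle: the raw expansions are long, and one must verify that every cross term reorganizes into the stated cyclic pattern and that the constant emerging from $\det\first=|\v a\times\v b|^2$ combines with the factor produced by the balancing substitution to give exactly the $2A(x)^2$ in the denominators. Everything preceding this step is formal; the only genuine bookkeeping lives here, and it is mechanical once the relations $\sum_i m_i a_{ij}=0$ and $\sum_i m_i b_{ij}=0$ are available.
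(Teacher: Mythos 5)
You should note first that the paper itself contains no proof of Proposition \ref{prop-a}: it is quoted verbatim from \cite{Kotani-Naito-Omori}, so your attempt can only be judged on its own merits. Your framework has the right shape (computing $\det\first=|\v{a}\times\v{b}|^2$ and relating it to $A(x)^2$ is correct, and the expansion of $\det\second$ and $\tr[\first^{-1}\second]$ in the quantities $a_{ij}=\langle\v{e}_i,\v{n}_j\rangle$, $b_{ij}=\langle\v{e}_i,\v{e}_j\rangle$ is the right start), but there is a genuine gap at precisely the step you call mechanical. You use harmonicity \emph{only} through the balancing condition at the central vertex $\v{v}$, i.e.\ through the relations $\sum_i m_ia_{ij}=0$ and $\sum_i m_ib_{ij}=0$. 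These relations place no constraint at all on how the normals $\v{n}_j$ sit relative to the edges (they hold for \emph{any} choice of unit vectors $\v{n}_1,\v{n}_2,\v{n}_3$ once the edges are balanced), and under them the claimed collapse onto the cyclic sums is false as an algebraic identity. Concretely, take $m\equiv1$, $\v{e}_1=(1,0,0)$, $\v{e}_2=(-\tfrac12,\tfrac{\sqrt3}2,0)$, $\v{e}_3=(-\tfrac12,-\tfrac{\sqrt3}2,0)$, and choose $\v{n}_1=\v{n}_2=\v{n}=(1,0,1)/\sqrt2$, $\v{n}_3=(0,0,1)$. All of your linear relations hold, but the first column of $\second(\v{v})$ is built from $\v{n}_2-\v{n}_1=0$, so $\det\second=0$ and hence $K(\v{v})=0$, whereas the right-hand side of the stated formula equals $-\frac{3}{2A(x)^2}\langle\v{e}_1,\v{n}\rangle\langle\v{e}_2,\v{n}\rangle\neq0$. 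So no amount of bookkeeping with the relations you have listed can produce the formula.

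The missing idea is that harmonicity is a hypothesis on the whole surface, and it must also be invoked at the three neighboring vertices $\v{v}_j$, in combination with the definition of the discrete unit normal. If $\v{g},\v{g}'$ denote the other two edges at $\v{v}_j$ with weights $m',m''$, balancing at $\v{v}_j$ gives $m'\v{g}+m''\v{g}'=m_j\v{e}_j$, and since $\v{n}_j$ is by definition perpendicular to the triangle spanned by the three neighbors of $\v{v}_j$, one computes
\begin{equation*}
  (\v{g}+\v{e}_j)\times(\v{g}'+\v{e}_j)
  =\frac{m_j+m'+m''}{m''}\,\v{g}\times\v{e}_j ,
\end{equation*}
so that $\langle\v{e}_j,\v{n}_j\rangle=0$ for $j=1,2,3$. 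These three extra relations (which visibly fail in the counterexample above, explaining why it does not contradict the proposition itself) are exactly what eliminate the diagonal terms $a_{jj}$ from your expansion; only with them do the numerators reorganize into the stated cyclic sums, and the computation then goes through essentially as you outline. A final caution: $A(x)$ is never defined in the present paper, so the overall constant in the denominators can only be pinned down against the conventions of \cite{Kotani-Naito-Omori}; your remark that $\det\first$ is a fixed multiple of $A(x)^2$ is an assumption, not something derivable from the text at hand.
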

By Proposition \ref{prop-a}, we notice that
a discrete harmonic surface may not be minimal. 
The following theorem provides a sufficient condition 
for a discrete harmonic surface 
which has vanishing mean curvature at each vertex.
\begin{thm}[\!\!{\cite[Theorem 3.17]{Kotani-Naito-Omori}}]
  A discrete harmonic surface 
  $\Phi\colon X  \to \R^3$ is minimal 
  if for any $v\in V$ 
  and 
  $E_{v}=\{e_1,e_2,e_3\}$
  \begin{equation*}
    \langle \Phi(e_1), \Phi(e_2) \rangle 
    + \langle \Phi(e_2), \Phi(e_3)\rangle 
    + \langle \Phi(e_3), \Phi(e_1)\rangle = 0.
  \end{equation*}
  In particular, 
  if $m\colon E \to \R_{+}$ is constant, 
  the equation above is equivalent to
  \begin{equation*} 
    |\Phi(e_1)| = |\Phi(e_2)| = |\Phi(e_3)|.
  \end{equation*}
\end{thm}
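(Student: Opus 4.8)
The plan is to prove minimality directly from the definition: it suffices to verify that the mean curvature $H(\v{v})$ vanishes at \emph{every} vertex $\v{v}\in M$. Because $\Phi$ is assumed harmonic, I would feed the local vertex data into the closed formula for $H$ furnished by Proposition \ref{prop-a} rather than recompute $\first^{-1}\second$ by hand. The prefactor $\tfrac{m_1+m_2+m_3}{2A(x)^2}$ is nonzero: the weights are positive and the area $A(x)$ of the neighbor triangle $\triangle(\v{v}_1,\v{v}_2,\v{v}_3)$ is positive because the discrete-surface axiom guarantees that at least two of the $\v{e}_i$ are linearly independent. Hence $H(\v{v})=0$ is equivalent to the vanishing of the cyclic sum
\begin{equation*}
  \Sigma(\v{v}) \coloneqq \sum_{(i,j,k)} \frac{\langle \v{e}_i,\v{e}_j\rangle\,\bigl(\langle \v{e}_i,\v{n}_j\rangle + \langle \v{e}_j,\v{n}_i\rangle\bigr)}{m_j},
\end{equation*}
so the whole problem reduces to showing that the stated hypothesis forces $\Sigma(\v{v})=0$.

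The heart of the argument is therefore the identity $\Sigma(\v{v})=0$. The hypothesis $\langle \v{e}_1,\v{e}_2\rangle + \langle \v{e}_2,\v{e}_3\rangle + \langle \v{e}_3,\v{e}_1\rangle = 0$ controls only the edge inner products, whereas $\Sigma$ weights each such inner product by a normal-pairing coefficient $c_{ij} \coloneqq \bigl(\langle \v{e}_i,\v{n}_j\rangle + \langle \v{e}_j,\v{n}_i\rangle\bigr)/m_j$. I would first try to show that harmonicity degenerates these coefficients enough that $\Sigma$ collapses onto the hypothesis. Concretely, I would use the balancing relation $m_1\v{e}_1 + m_2\v{e}_2 + m_3\v{e}_3 = 0$ to eliminate one edge (say $\v{e}_3 = -(m_1\v{e}_1+m_2\v{e}_2)/m_3$) and rewrite $c_{12}, c_{23}, c_{31}$ in a common frame, together with the defining property that $n(\v{v})$ is balanced against the three edges, $\langle \v{e}_1,n(\v{v})\rangle = \langle \v{e}_2,n(\v{v})\rangle = \langle \v{e}_3,n(\v{v})\rangle$ (since $n(\v{v})\perp\triangle(\v{v}_1,\v{v}_2,\v{v}_3)$). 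The aim is to reach $\Sigma(\v{v}) = \lambda\,\bigl(\langle \v{e}_1,\v{e}_2\rangle + \langle \v{e}_2,\v{e}_3\rangle + \langle \v{e}_3,\v{e}_1\rangle\bigr)$ for a single common factor $\lambda$, at which point the hypothesis annihilates it. As a fallback I would return to $H=\tfrac12\tr(\first^{-1}\second)$ and instead show that the hypothesis makes $\first^{-1}\second$ traceless, i.e. that the combination $\first_{22}\second_{11} - \first_{12}(\second_{12}+\second_{21}) + \first_{11}\second_{22}$ is proportional to the hypothesis after expanding $\first,\second$ in the $\v{e}_i,\v{n}_i$.

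For the ``in particular'' clause I specialize to constant weight, so that balancing reads $\v{e}_1+\v{e}_2+\v{e}_3=0$. Polarizing this relation yields $\langle \v{e}_i,\v{e}_j\rangle = \tfrac12\bigl(|\v{e}_k|^2 - |\v{e}_i|^2 - |\v{e}_j|^2\bigr)$ for each cyclic $(i,j,k)$, so the three edge inner products are completely determined by the squared lengths $|\v{e}_1|^2,|\v{e}_2|^2,|\v{e}_3|^2$. I would substitute these expressions into the inner-product hypothesis to convert it into a relation purely among the squared lengths, and then read off the equivalent symmetric condition, which I expect to be $|\v{e}_1|=|\v{e}_2|=|\v{e}_3|$; any residual sign or constant discrepancy would be reconciled at this substitution step.

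The step I expect to be the main obstacle is the cancellation establishing $\Sigma(\v{v})=0$. The difficulty is a locality mismatch: the hypothesis is entirely local at $\v{v}$, involving only the edges $\v{e}_i$, whereas $\Sigma$ involves the neighbor normals $\v{n}_j = n(\v{v}_j)$, which encode second-neighbor data not constrained by balancing at $\v{v}$ alone. The delicate point is thus to understand \emph{why} this apparently non-local information drops out — I expect it to, precisely because the formula of Proposition \ref{prop-a} is already the harmonic reduction of $\tfrac12\tr(\first^{-1}\second)$, but verifying that the coefficients $c_{ij}$ recombine so that only the hypothesis survives is where the genuine work lies; if that recombination proves unwieldy, the traceless-shape-operator route sketched above is the cleaner alternative.
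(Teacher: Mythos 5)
The paper itself never proves this statement---it is imported verbatim from \cite[Theorem 3.17]{Kotani-Naito-Omori}---so your proposal can only be judged against the mathematics, and there it stalls exactly at the step you flagged as the main obstacle; moreover, the target you set for that step is provably unreachable. Your reduction via Proposition \ref{prop-a} to showing $\Sigma(\v{v})=0$ is sound, but the tools you allow yourself---balancing at $\v{v}$ and perpendicularity of $n(\v{v})$ to the neighbor triangle---never touch the pairings $\langle\v{e}_i,\v{n}_j\rangle$ appearing in $\Sigma$, because those involve the normals at the \emph{neighbors} $\v{v}_j$. The missing idea is to use harmonicity at each neighbor: balancing at $\v{v}_j$ places $\v{v}_j$ in the plane of its own three neighbors, so $\v{n}_j$ is perpendicular to every edge at $\v{v}_j$, in particular $\langle\v{e}_j,\v{n}_j\rangle=0$; pairing the balancing relation at $\v{v}$ with each $\v{n}_j$ then gives (for the paper's standing weight $m\equiv1$) $\langle\v{e}_2,\v{n}_1\rangle=-\langle\v{e}_3,\v{n}_1\rangle$, $\langle\v{e}_3,\v{n}_2\rangle=-\langle\v{e}_1,\v{n}_2\rangle$, $\langle\v{e}_1,\v{n}_3\rangle=-\langle\v{e}_2,\v{n}_3\rangle$. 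Substituting these into the cyclic sum yields
\begin{equation*}
  \begin{aligned}
    \Sigma(\v{v})
    ={}& \langle\v{e}_1,\v{n}_2\rangle\bigl(\langle\v{e}_1,\v{e}_2\rangle-\langle\v{e}_2,\v{e}_3\rangle\bigr)
    + \langle\v{e}_2,\v{n}_3\rangle\bigl(\langle\v{e}_2,\v{e}_3\rangle-\langle\v{e}_3,\v{e}_1\rangle\bigr)\\
    &+ \langle\v{e}_3,\v{n}_1\rangle\bigl(\langle\v{e}_3,\v{e}_1\rangle-\langle\v{e}_1,\v{e}_2\rangle\bigr),
  \end{aligned}
\end{equation*}
with an analogous weighted identity in general. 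The non-local data thus multiplies \emph{differences} of the edge inner products, not their sum; since $\langle\v{e}_1,\v{n}_2\rangle$, $\langle\v{e}_2,\v{n}_3\rangle$, $\langle\v{e}_3,\v{n}_1\rangle$ encode second-neighbor information independent of the data at $\v{v}$, no factorization $\Sigma=\lambda\bigl(\langle\v{e}_1,\v{e}_2\rangle+\langle\v{e}_2,\v{e}_3\rangle+\langle\v{e}_3,\v{e}_1\rangle\bigr)$ with a common factor $\lambda$ can exist. What annihilates $\Sigma$ is the equality $\langle\v{e}_1,\v{e}_2\rangle=\langle\v{e}_2,\v{e}_3\rangle=\langle\v{e}_3,\v{e}_1\rangle$, not the vanishing of the sum.

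This also shows that your treatment of the ``in particular'' clause cannot be saved by reconciling ``a residual sign or constant.'' Your own polarization step, applied to $\v{e}_1+\v{e}_2+\v{e}_3=0$, gives $\langle\v{e}_1,\v{e}_2\rangle+\langle\v{e}_2,\v{e}_3\rangle+\langle\v{e}_3,\v{e}_1\rangle=-\tfrac12\bigl(|\v{e}_1|^2+|\v{e}_2|^2+|\v{e}_3|^2\bigr)<0$, so under constant weight the displayed hypothesis is never satisfied by any nondegenerate harmonic surface, whereas equal edge lengths force each inner product to equal $-\tfrac12|\v{e}_1|^2$---they do not make the sum vanish. In other words, the statement as quoted in this paper garbles the source: internal consistency (and the identity above) shows the intended hypothesis is that the three inner products are \emph{equal}, which your polarization argument correctly proves is equivalent, under constant weight, to $|\v{e}_1|=|\v{e}_2|=|\v{e}_3|$. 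With that corrected hypothesis, your overall route---nonzero prefactor, reduction to $\Sigma=0$, polarization for the constant-weight clause---is exactly the right one; the single ingredient it lacks is the neighbor-harmonicity lemma $\langle\v{e}_j,\v{n}_j\rangle=0$, and the single error is aiming the cancellation at the sum rather than at the pairwise differences.
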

\par
In the present paper, we always use $m \equiv 1$ from now on.


\section{Construction of subdivisions}
\label{sec:3}
The process of subdivision consists of two steps. 
The first step is a topological subdivision $X_i$ of $X$ by using the GC-construction, 
and the second step is to construct subdivisions $M_i$ of $M= \Phi(X)$ in $\R^3$ with $M_i= \Phi_i(X_i)$ 
so that energy of $\Phi_i$ monotonically decreases and converges to a natural continuum object.
\subsection{Goldberg-Coxeter construction of trivalent topological graphs}
The Goldberg-Coxeter construction (GC-construction) is a way 
to subdivide a trivalent surface graph 
defined by M.~Deza and M.~Detour Sikiri{\'c} \cite{Deza-Dutour} 
(see also Omori-Naito-Tate \cite{Omori-Naito-Tate}).
\begin{definition}[Goldberg-Coxeter construction]
  Let $X = (V,E,F)$ be a trivalent surface graph. 
  The graph $\GC(X)$ is built in the following steps (see Figure~\ref{fig-a}).
  \begin{enumerate}
  \item 
    \label{enum:gc:1}
    Take the dual graph $X^{*}$ of $X$. 
    Since $X$ is trivalent, $X^*$ is a triangulation, 
    namely, a surface graph whose faces are all triangles.
  \item 
    \label{enum:gc:2}
    Every triangle in $X^*$ is subdivided into another set of faces.  
    If we obtain a face which is not a triangle, 
    then it can be glued with other neighboring non-triangle faces to form triangles.
  \item 
    \label{enum:gc:3}
    By duality, 
    the triangulation of (\ref{enum:gc:2}) is transformed into $\GC(X)$.
  \end{enumerate}
\end{definition}
\begin{figure}[htp]
  \centering
  \subfigure[]{
    \label{fig-a:a} 
    \includegraphics[width=1.5in]{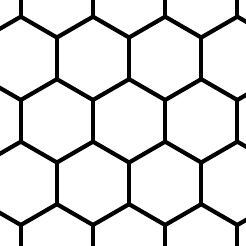}}
  \hspace{0.1in}
  \subfigure[]{
    \label{fig-a:b} 
    \includegraphics[width=1.5in]{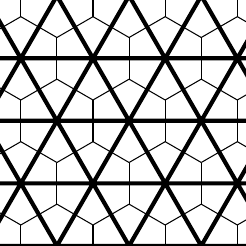}}
  \hspace{0.1in}
  \subfigure[]{
    \label{fig-a:c} 
    \includegraphics[width=1.5in]{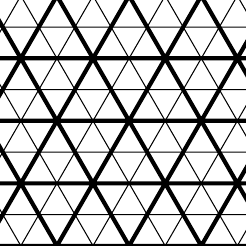}}
  \hspace{0.1in}
  \subfigure[]{
    \label{fig-a:d} 
    \includegraphics[width=1.5in]{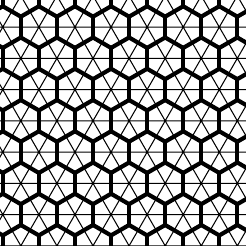}}
  \hspace{0.1in}
  \subfigure[]{
    \label{fig-a:e} 
    \includegraphics[width=1.5in]{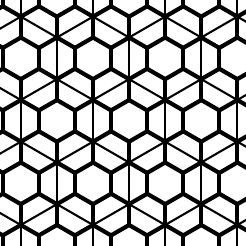}}
  \hspace{0.1in}
  \subfigure[]{
    \label{fig-a:f} 
    \includegraphics[width=1.5in]{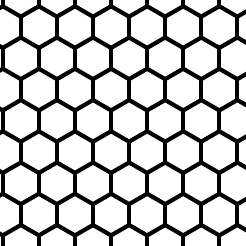}}
  \caption{the $\GC$ subdivision of the hexagonal lattice:
    For a given trivalent graph \ref{sub@fig-a:a},  
    its dual graph \ref{sub@fig-a:c} is constructed as shown in \ref{sub@fig-a:b}.
    The subdivision of \ref{sub@fig-a:c} is obtained as in \ref{sub@fig-a:d} and its dual graph, 
    \ref{sub@fig-a:f} is constructed as shown in \ref{sub@fig-a:e}.
  }
  \label{fig-a} 
\end{figure}
To apply the GC-construction for a surface graph to our case, 
we need a notion of ``leaves''. 
A \emph{leaf} with an $n$-gonal face $f$ as its core is the set
\begin{displaymath}
  L(f) = \{ f, f_1, \ldots, f_n \}
\end{displaymath}
of $f$ and all its neighboring faces $f_1, \ldots, f_n$ in $X$ (see Figure~\ref{fig-b}).
A leaf can be considered as a surface graph 
and thus be subdivided {\it topologically} by using the GC-construction (see \cite{Deza-Dutour}).
It should be noted, for a given leaf $L$ embedded in the surface,
the limit set $\overline{\cup L_i}$ of iterated subdivision $L_i$ of $L$ forms a domain in the surface in the Hausdorff topology. 
Thus we have a sequence of topological subdivisions $X_{i}$ of $X$ leafwise.
We denote the system of leafwise $\GC$-constructions by $\GC(X)$.
\begin{re}
  \mbox{}
  \begin{enumerate}
  \item
    In the present paper we only use $\GC_{2,0}$ (GC-construction of type $(2, 0)$) to subdivide the surface graph and denote it $GC$ for simplicity.
    For more general cases, see \cite{Deza-Dutour}.
  \item
    The construction of $\GC$-subdivision increases the number of hexagons of the surface graph only. 
    It does not change the number of other types of polygons.
    More precisely, on a leaf $L$
    with an $n$-gonal face $f$ at the center we obtain an $n$-gonal face $f'$ in it surrounded by $n$ hexagonal faces in $\GC(L)$ 
    (see Figure~\ref{fig-b}). 
  \item
    The limit metric on the domain is not the Euclidean metric but a similar metric studied as the tangent cone at infinity in \cite{kotani-sunada-tangentcone}.
    We do not study it in the present paper because the metric concerned in our problem is the induced metric through the realization in $\R^3$.
  \end{enumerate}
\end{re}

\begin{figure}[htp]
  \centering
  \subfigure[]{
    \label{fig-b:a} 
    \includegraphics[width=1.5in]{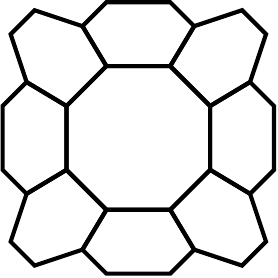}
  }
  \hspace{0.1in}
  \subfigure[]{
    \label{fig-b:b} 
    \includegraphics[width=1.5in]{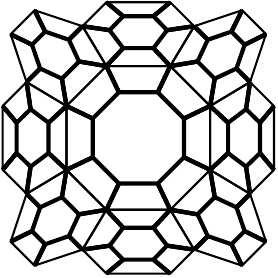}
  }
  \hspace{0.1in}
  \subfigure[]{
    \label{fig-b:c} 
    \includegraphics[width=1.5in]{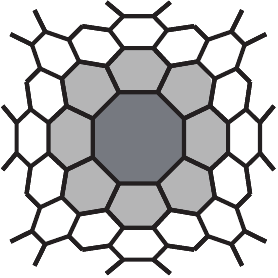}
  }
  \caption{
    Application of the GC construction to a leaf with an octagon at the center \ref{sub@fig-b:a}.
    \ref{sub@fig-b:b} shows its dual graph.
    The result as is shown at \ref{sub@fig-b:c} is a smaller octagon (gray region)
    and 8 hexagons (light-gray region) around it.
  }
  \label{fig-b}
\end{figure}
Next, we explain how to determine their configurations in $\R^3$ as a geometric subdivision of a given discrete surface.
\subsection{GC-subdivision of discrete surfaces}
For a discrete surface $\Phi \colon X \to M \subset \R^3$, 
we first introduce the method of its subdivision and 
then discuss the convergence of the sequence 
$\{ M_i \}_{i = 0}^{\infty}$  inductively constructed with $M_0 = M$ and $M_{i+1}$ as the subdivision of $M_i$.
\par
Let $X_{0} = (V_{0}, E_{0}, F_{0})$ be a trivalent topological surface graph 
and 
$X_{i+1}$ be the GC-construction of $X_{i}$, 
i.e.,
$X_{i+1}\coloneqq \GC(X_{i})$, for any $i \in \N$.
\par
Assume we have already obtained
\begin{equation*}
  \Phi_{i}\colon X_{i} \to M_{i+1},
\end{equation*}
and define 
\begin{equation*}
  \widetilde{\Phi}_{i+1} \colon X_{i+1} \to \widetilde{M}_{i+1} \subseteq \R^3
\end{equation*} 
as a minimizing map of the Dirichlet energy 
from $X_{i+1}$ with $M_i = \Phi_i (X_i)$ as the boundary condition, 
namely it satisfies
\begin{itemize}
\item[1.] 
  $\widetilde{\Phi}_{i+1}(V_i) = \Phi_{i}(V_i)$,
\item[2.]
  $\widetilde{\Phi}_{i+1}$ takes the minimum of the Dirichlet energy locally, 
  i.e.,
  on any fixed face $f^{(i)}\in F_{i}$,
  \begin{equation*}
    E_{D}(\widetilde{\Phi}_{i+1}(X_{i+1}|_{f^{(i)}})) 
    = \min \{ E_{D}(\widetilde{\Phi} \colon X_{i+1}|_{f^{(i)}} \to \E^3)\}.
  \end{equation*}
\end{itemize}
The vertex set of $\widetilde{M}_{i+1}$ is 
$\widetilde{\MV}_{i+1} = \MV_{i} \cup \MV_{i+1}^{i}$,
where $\MV_{i} = \Phi_{i}(V_{i})$, 
$\MV_{i+1}^{i}$ is the set of solution vertices of the boundary problem.
\par 
Define a projection  
\begin{equation*}
  \pi_{i+1} \colon \widetilde{M}_{i+1} \to M_{i+1}
\end{equation*}
with $M_{i+1}$ as the image. For any $\v{v} \in \widetilde{M}_{i+1}$ 
\begin{equation*}
  \pi_{i+1}(\v{v})=
  \begin{cases}
    \v{v}& \v{v} \in \MV_{i+1}^{i}\\
    \text{barycenter of its neighbors}& \v{v} \in \MV_{i}
  \end{cases}.
\end{equation*}
Finally, let 
\begin{equation*}
  \Phi_{i+1} = \pi_{i+1} \circ \widetilde{\Phi}_{i+1} \colon X_{i+1} \to M_{i+1}.
\end{equation*}
Then we define a sequence of $\{ M _{i} \}_{i}$ step by step 
as in the following diagram:
\begin{center}
  \includegraphics[bb=0 0 129 112]{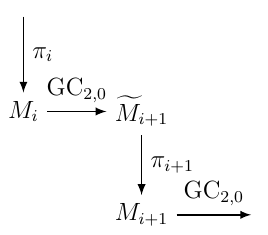}
\end{center}
It is clear there exists such $\Phi_{i+1}$ and it is unique. 
We call the Goldberg-Coxeter subdivision (GC-subdivision) $\Phi_{i+1}(X_{i+1})$.
\par
In \cite{Tao}, $\widetilde{M}_{i}$ was used as the subdivision but here we have found that the modified subdivision composing with the projection $\pi$ works better.
\subsection{Estimate of distance}
To discuss its convergence in the Hausdorff topology, 
we use the following energy estimate on a face:
\begin{prop}[\!\cite{Tao}]
  \label{prop-m}
  For any fixed $n$-gonal face $\v{f}^{(i)}\in F_{i}$, 
  there exists a constant number $\lambda(n)<1$ 
  such that
  \begin{equation}\label{energy-1}
    E_{D}(\widetilde{\Phi}_{i+1}(X_{i+1}|_\v{f^{(i)}}))
    \le \lambda(n) E_D(\Phi_{i}(X_{i}|_{f^{(i)}})).
  \end{equation}
\end{prop}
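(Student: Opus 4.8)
The plan is to reduce the estimate to a finite-dimensional, purely combinatorial generalized eigenvalue problem that depends only on $n$, and then to bound that eigenvalue strictly below $1$ by a test placement. First I would record that, with the $n$ boundary vertices $\v{v}_0,\dots,\v{v}_{n-1}$ of $\v{f}^{(i)}$ held fixed, the minimizer $\widetilde{\Phi}_{i+1}$ places each new interior vertex of $X_{i+1}|_{\v{f}^{(i)}}$ at the solution of the discrete Laplace equation, hence at a position that is a linear function of the boundary data (uniqueness is already noted in the text). Consequently the left-hand side is a quadratic form $Q_{\mathrm{sub}}$ in $(\v{v}_0,\dots,\v{v}_{n-1})\in(\R^3)^n$, while the right-hand side is the quadratic form $Q_{\mathrm{cyc}}(\v{v})=\sum_{j}|\v{v}_{j+1}-\v{v}_j|^2$ of the boundary $n$-cycle. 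Both forms are invariant under a common translation of all vertices and split over the three Euclidean coordinates, so it suffices to treat scalar boundary data $b\in\R^n$ modulo the constant vector. On the $(n-1)$-dimensional quotient $Q_{\mathrm{cyc}}$ is positive definite (the cycle is connected), so the ratio $Q_{\mathrm{sub}}(b)/Q_{\mathrm{cyc}}(b)$ attains its maximum $\lambda(n)$, which is the largest generalized eigenvalue of the pair $(Q_{\mathrm{sub}},Q_{\mathrm{cyc}})$ and depends only on the combinatorics of the $\GC$ refinement of an $n$-gon, i.e.\ only on $n$.

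It then remains to show $\lambda(n)<1$. Here I would use the variational characterization of $\widetilde{\Phi}_{i+1}$: for any placement $\Psi$ of the interior vertices agreeing with the fixed boundary one has $E_{D}(\widetilde{\Phi}_{i+1}(X_{i+1}|_{\v{f}^{(i)}}))\le E_{D}(\Psi)$. I would take $\Psi$ to be the explicit \emph{affine} test placement in which each refined vertex is assigned the barycentric coordinates it carries in the planar, regular model of the $\GC$-subdivided $n$-gon, and then mapped into $\R^3$ by the same affine combination of the actual vertices $\v{v}_0,\dots,\v{v}_{n-1}$. This produces an explicit quadratic form $Q_{\mathrm{test}}$ with $Q_{\mathrm{sub}}\le Q_{\mathrm{test}}$, and reduces the claim to checking $Q_{\mathrm{test}}\le \lambda(n)Q_{\mathrm{cyc}}$ with $\lambda(n)<1$, again a finite computation in $n$.

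The heart of the matter, and the step I expect to be the main obstacle, is this strict and configuration-uniform inequality $\lambda(n)<1$. The naive scaling heuristic is inconclusive: the refinement shortens edges by roughly a factor $1/2$ (contributing $1/4$ to squared lengths) while multiplying their number, so the two effects are a priori in balance. The strict gain must therefore be extracted from the interior freedom created by the subdivision, exactly as a single free midpoint halves the energy of one edge ($2\cdot(\ell/2)^2=\ell^2/2<\ell^2$); the task is to show that this local gain survives the assembly of the inner $n$-gon and its $n$ surrounding hexagons and dominates the added interior edges, uniformly over all (possibly non-planar) boundary $n$-gons. Once $Q_{\mathrm{test}}$ is written down explicitly, this becomes the verification that every generalized eigenvalue of $(Q_{\mathrm{test}},Q_{\mathrm{cyc}})$ lies strictly below $1$, which I would carry out by simultaneously block-diagonalizing both forms in the discrete Fourier basis of the cyclic symmetry $\Z/n\Z$ and treating the resulting low-dimensional blocks one at a time; the worst block then yields the explicit constant $\lambda(n)$.
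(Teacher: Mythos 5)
Your first reduction is sound, and it is in fact the skeleton of the paper's own argument: the Dirichlet minimizer depends linearly on the boundary data, both sides are translation-invariant quadratic forms that split over the three coordinates, and the cyclic symmetry makes all the forms circulant, hence simultaneously diagonalized by the discrete Fourier basis on $\Z/n\Z$ after quotienting out the constant vector. One simplification you could have used: the restricted graph $X_{i+1}|_{f^{(i)}}$ consists only of the inner $n$-cycle together with the $n$ spokes joining each boundary vertex $\v{v}_j$ to its new vertex (the surrounding hexagons belong to the restrictions of neighboring faces), so the unknown is a single vector $\v{f}^{(i+1)}\in(\R^3)^n$ and there is no ``assembly of hexagons'' to control.

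The genuine gap is in your second step: the inequality $\lambda(n)<1$ is never established, and the specific test placement you propose would not establish it. Any placement that puts the inner $n$-gon at fixed barycentric coordinates taken from a geometric model of the subdivision has the form $\Psi_j=\rho\,\v{v}_j+(1-\rho)\bar{\v{v}}$ with a scale $\rho$ independent of the boundary configuration (for the half-scale picture of the $\GC_{2,0}$ figure, $\rho=1/2$), and this fails for large $n$: taking the boundary to be a planar regular $n$-gon of unit circumradius, the spoke energy is $n(1-\rho)^2$ while the boundary cycle energy is $4n\sin^2(\pi/n)$, so for $\rho=1/2$ the ratio is $\tfrac14+\tfrac{1}{16\sin^2(\pi/n)}$, which exceeds $1$ for every $n\ge 11$. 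The culprit is exactly the lowest Fourier mode: relative to the cycle energy, the spokes are weighted by $1/(4\sin^2(\pi/n))\sim n^2$, so an admissible placement must keep the inner polygon within $O(\sin^2(\pi/n))$ of the boundary, which no fixed-scale model does. The paper sidesteps this by solving the Euler--Lagrange system exactly: $\v{f}^{(i+1)}=A(n)\v{f}^{(i)}$ with the circulant matrix $A(n)=(3I_n-T-T^t)^{-1}$, whose eigenvalues are $\lambda_k(n)=1/(1+4\sin^2(k\pi/n))$; since the edge-difference vectors are orthogonal to the constant eigenvector $\phi_0$, the estimate follows with the sharp constant $\lambda(n)=\lambda_1(n)=1/(1+4\sin^2(\pi/n))<1$ (note $\lambda_1(n)\to1$, so no $n$-independent contraction could ever work). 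Your framework does admit this repair---Fourier-diagonalize $Q_{\mathrm{sub}}$ itself rather than a test form, or equivalently take the contraction factor $\rho=\lambda_1(n)$, which reproduces the sharp bound mode by mode---but as written the decisive inequality rests on a placement that does not satisfy it.
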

\par
We make a quick review of the proof in our setting for the reader's convenience.
\begin{proof}
  It is sufficient to prove the assertion for the case $i=0$.
  \par
  For fixed $\v{f}^{(0)}$, 
  the vertices of $\Phi_{0}(X_{0}|_{f^{(0)}})$ are denoted by 
  $\{\v{v}_{0_1},\, \v{v}_{0_2},\ldots,\v{v}_{0_{n-1}}\}$, 
  and the inner vertices of $\widetilde{\Phi}_{1}(X_{1}|_{f^{(0)}})\in M_{1}$ are denoted by 
  $\{\v{v}_{1_1},\, \v{v}_{1_2},\ldots,\v{v}_{1_{n-1}}\}$,
  where 
  $\v{v}_{0_i}$ 
  and
  $\v{v}_{1_i}$
  are connected by a single edge. 
  \par
  Let 
  $\v{f}^{(0)} = (\v{v}_{0_1},\, \v{v}_{0_2},\ldots, \v{v}_{0_{n-1}})^t$, 
  $\v{f}^{(1)} = (\v{v}_{1_1},\, \v{v}_{1_2},\ldots, \v{v}_{1_{n-1}})^t$,
  \begin{equation*}
    T \coloneqq 
    \begin{pmatrix}
      O &I_{n-1}\\
      1& O^T
    \end{pmatrix} 
    \in \mathcal{M}(n),
  \end{equation*}
  where $O = (0, 0, \ldots,0)^{t} \in \mathbb{R}^{n-1}$.
  Then
  \begin{equation*}
    \begin{aligned}
      E_{D}(\widetilde{\Phi}_{1}(X_{1}|_{f^{(0)}})) 
      = \sum_{i=0}^{n-1} |\v{v}_{0_i} - \v{v}_{1_i}|^2 
      + \sum_{i=0}^{n-1} |\v{v}_{1_i} - \v{v}_{1_{i+1}}|^2 
      = \| \v{f}^{(0)} - \v{f}^{(1)}\|^2 + \| \v{f}^{(1)} - T \v{f}^{(1)} \|^2,
    \end{aligned}
  \end{equation*}
  \begin{equation*}
    E_D(\Phi_{0}(X_{0}|_{f^{(0)}}))
    = \sum_{i=0}^{n-1} |\v{v}_{0_i} - \v{v}_{0_{i+1}} |^2
    = \| \v{f}^{(0)} - T\v{f}^{(0)}\|^2,
  \end{equation*}
  where $|\cdot|$ is the vector norm and $\| \cdot \|$ is the Hilbert-Schmidt norm of the square matrix.
  \par
  The minimizer of Dirichlet energy infers that 
  \begin{equation*}
    \frac{\partial E_{D}(\widetilde{\Phi}_{1}(X_{1}|_{f^{(0)}}))}{\partial \v{v}_{1_i}} = 0,
    \quad
    \textnormal{ for } i = 0,\,1, \ldots, n-1,
  \end{equation*}
  which is 
  \begin{equation*}
    - \v{v}_{1_{i-1}} + 3\v{v}_{1_i} - \v{v}_{1_{i+1}} 
    = \v{v}_{0_i},
    \quad
    \textnormal{ for } i = 0,\,1, \ldots, n-1,
  \end{equation*}
  Then we obtain 
  \begin{equation}
    \label{3.2}
    \v{f}^{(1)} = A(n) \v{f}^{(0)},
  \end{equation} 
  where
  \begin{equation}
    A(n)\coloneqq
    (3I_{n}-T-T^{t})^{-1} 
    =
    \begin{pmatrix}
      3&-1&0&\ldots&0&-1\\
      -1&3&-1&\ldots&0&0\\
      0&-1&3&\ldots&0&0\\
      0&0&-1&\ldots&0&0\\
      &&&\ldots&&\\
      0&0&0&\ldots&3&-1\\
      -1&0&0&\ldots&-1&3
    \end{pmatrix}^{-1}
    \in \mathcal{M}(n),
  \end{equation}
  and $I_{n}$ is the identity matrix of size $n$.
  \par
  Direct computation shows 
  the eigenvalues of $A(n)$ to be
  \begin{equation}
    \lambda_{k}(n) = \frac{1}{1+4\sin^2(k\pi/n)},
    \quad k=0,1,\ldots,n-1.
  \end{equation}
  On the other hand, 
  since $A(n)$ is symmetric, we have
  \begin{equation*}
    A(n)(\v{f}^{(0)} - T \v{f}^{(0)}) 
    = A(n)\v{f}^{(0)} - A(n)T\v{f}^{(0)}
    = A(n)\v{f}^{(0)} -TA(n)\v{f}^{(0)}
    = \v{f}^{(1)} - T\v{f}^{(1)}.
  \end{equation*} 
  Here we claim that
  \begin{equation}
    \v{f}^{(0)} - T \v{f}^{(0)}\perp \phi_0,
    \quad
    \v{f}^{(1)} - T\v{f}^{(1)}\perp \phi_0,
    \label{eq-a}
  \end{equation}
  where $\phi_{0}=(1,\ldots,1)^T$ 
  is the eigenvector of $\lambda_{0}=1$.\\
  In fact, if we let 
  $\v{f}^{(0)} = c^0\phi_0 + \v{f}^{(0)}_{\perp}$, 
  where $\v{f}^{(0)}_{\perp} \perp \phi_0$,  
  then by noticing that $T\phi_{0} = \phi_{0}$, 
  we have  
  \begin{equation*}
    \begin{aligned}
      \langle \v{f}^{(0)} - T\v{f}^{(0)}, \phi_0\rangle
      & = \langle \v{f}^{(0)}_{\perp} - T\v{f}^{(0)}_{\perp}, \phi_0\rangle
      = \langle \v{f}^{(0)}_{\perp}, \phi_0\rangle 
      -\langle T\v{f}^{(0)}_{\perp}, \phi_0 \rangle\\
      & = -\langle T\v{f}^{(0)}_{\perp}, T\phi_0\rangle
      = - T\langle \v{f}^{(0)}_{\perp}, \phi_0\rangle \\
      & = 0.
    \end{aligned}
  \end{equation*}
  Similarly, 
  we can also prove 
  $\v{f}^{(1)} - T\v{f}^{(1)}\perp \phi_0$ 
  by noticing the fact that 
  \begin{equation*}
    A(n)\phi_0 = \lambda_0(n)\phi_0 = \phi_0.
  \end{equation*}
  \par
  Letting $\tilde{\sigma}(A(n))$ be the second largest eigenvalue of $A(n)$,
  by (\ref{eq-a}) we have
  \begin{equation}
    \label{eq:localenergy0}
    \|\v{f}^{(0)} - T\v{f}^{(1)}\|^2 
    \le \tilde{\sigma}(A(n))^2 \| \v{f}^{(0)} - T\v{f}^{(0)} \|^2
    = \lambda_1^2(n)\| \v{f}^{(0)}-T\v{f}^{(0)} \|^2.
  \end{equation}
  Similarly, 
  \begin{equation*}
    \| \v{f}^{(0)} - \v{f}^{(1)} \|^2
    \le\lambda_{1}(n)(1-\lambda_{1}(n))\|\v{f}^{(0)} - T\v{f}^{(0)}\|^2.
  \end{equation*}
  Therefore
  \begin{equation}
    \label{eq:localenergy1}
    \begin{aligned}
      &\| \v{f}^{(1)} - T\v{f}^{(1)}\|^2 + \| \v{f}^{(0)} - \v{f}^{(1)}\|^2
      \\
      & \le \lambda_1^2(n)\| \v{f}^{(0)} - T\v{f}^{(0)}\|^2 
      + \lambda_{1}(n)(1-\lambda_{1}(n))\| \v{f}^{(0)} - T\v{f}^{(0)}\|^2\\
      & =\lambda_1(n)\| \v{f}^{(0)} - T\v{f}^{(0)}\|^2,
    \end{aligned}
  \end{equation}
  where $\lambda_{1}(n) = 1/(1+4\sin^2(\pi/n))<1$ as desired.
\end{proof}


\section{Convergence of subdivided discrete surfaces}
\label{sec:4}
\subsection{Cauchy sequence}
Firstly we prove the sequence of subdivided discrete surfaces forms a Cauchy sequence in the Hausdorff topology. 
\begin{thm}\label{cauchy}
  The sequence of discrete surfaces 
  $\{M_{i}\}^{\infty}_{i=0}$ 
  that are constructed by the GC-subdivisions  as in Section \ref{sec:3} forms a Cauchy sequence 
  in the Hausdorff topology.
  \label{thm-e}
\end{thm}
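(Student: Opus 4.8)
The plan is to upgrade the single-step, single-face energy contraction of Proposition~\ref{prop-m} into a geometric decay of edge lengths that is uniform over all faces and all stages, and then to sum the resulting consecutive Hausdorff distances as a geometric series. Throughout I work on a fixed finite region $R$ (the territory of one original face, or one leaf), which is enough since the Hausdorff topology is controlled leafwise.

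First I would fix a uniform contraction constant. By the remark on the $\GC$-construction, every face of $M_i$ with $i\ge 1$ is either a hexagon or one of the finitely many ``central'' $n$-gons inherited from $M_0$, and no face larger than $N\coloneqq\max_{f\in F_0}n(f)$ is ever created. Since $\lambda_1(n)=1/(1+4\sin^2(\pi/n))$ is strictly increasing in $n$ and $<1$ for every $n\ge 3$, the number $\lambda\coloneqq\max_{3\le n\le N}\lambda_1(n)$ lies in $(0,1)$ and, by Proposition~\ref{prop-m}, governs the Dirichlet-minimization step on every face at every stage: $E_{D}(\widetilde\Phi_{i+1}(X_{i+1}|_f))\le \lambda\,E_{D}(\Phi_i(X_i|_f))$.

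Next I would dispose of the projection step $\pi_{i+1}$. Replacing an old vertex by the barycenter of its neighbours is exactly the move that minimizes the sum of squared lengths of its incident edges with all other vertices held fixed; hence, carried out leafwise, $\pi_{i+1}$ does not increase the Dirichlet energy, and $E_{D}(\Phi_{i+1}(X_{i+1}|_R))\le E_{D}(\widetilde\Phi_{i+1}(X_{i+1}|_R))\le \lambda\,E_{D}(\Phi_i(X_i|_R))$. Iterating gives $E_{D}(\Phi_i(X_i|_R))\le \lambda^{i}E_{D}(\Phi_0(X_0|_R))$. Because $m\equiv 1$, the energy on $R$ is the sum of squared edge lengths there, so the maximal edge length satisfies $\ell_i(R)\le\sqrt{E_{D}(\Phi_i(X_i|_R))}\le\lambda^{i/2}\sqrt{E_{D}(\Phi_0(X_0|_R))}$. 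Finally this is converted into a Hausdorff estimate: each vertex created at stage $i+1$ is a discrete-harmonic combination of the vertices of a single $n$-gonal face of $M_i$ ($n\le N$), hence by the maximum principle lies in their convex hull and so within $N\ell_i$ of an old vertex; conversely each old vertex is retained before projection and displaced by at most $N\ell_i$ under $\pi_{i+1}$. Thus $d_{H}(M_i,M_{i+1})\le C\lambda^{i/2}$ with $C$ depending only on $N$ and the geometry of $M_0$, and for $j>i$ the triangle inequality yields $d_{H}(M_i,M_j)\le\sum_{k=i}^{j-1}d_{H}(M_k,M_{k+1})\le C\lambda^{i/2}/(1-\sqrt\lambda)\to 0$, which is the Cauchy property.

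The main obstacle I anticipate is the projection step: one must verify that leafwise barycentering is genuinely energy non-increasing even though it moves the shared boundary vertices, and that it displaces each vertex by at most $O(\ell_i)$; this is precisely where the present method differs from the one in \cite{Tao}. A secondary point requiring care is that the estimate above is local, valid on a fixed finite region $R$, so for an infinite $M_0$ one phrases Cauchyness leafwise (or along an exhausting family of finite domains), the genuinely global accumulation being exactly the set $\MM_{\MS}$ treated separately.
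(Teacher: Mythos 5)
Your proposal follows the same skeleton as the paper's proof---iterate the contraction of Proposition~\ref{prop-m}, deduce geometric decay of $d_H(M_i,M_{i+1})$, and sum a geometric series---but the step where you aggregate the contraction contains a genuine gap. Proposition~\ref{prop-m} is a per-face statement: the right-hand side $E_D(\Phi_i(X_i|_{f}))$ is the energy of the boundary cycle of the single face $f$. When you sum over all faces inside a region $R$ to get your claimed inequality $E_D(\widetilde\Phi_{i+1}(X_{i+1}|_R))\le \lambda\,E_D(\Phi_i(X_i|_R))$, every interior edge of $M_i$ is counted twice on the right (each edge lies on two faces), so what actually follows is only $E_D(\widetilde\Phi_{i+1}(X_{i+1}|_R))\le 2\lambda\,E_D(\Phi_i(X_i|_R))$. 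Since $\lambda_1(6)=1/2$ and $\lambda_1(n)>1/2$ for $n>6$, one has $2\lambda\ge 1$ as soon as hexagons are present---which is always the case after one subdivision---so the total energy on $R$ need not contract at all. This is not a technicality: the paper's Theorem~\ref{monotonicity} asserts only boundedness of the total energy in general; for the Mackay crystal (which has octagons) the energy actually increases, and for the flat hexagonal lattice it is exactly conserved (four times as many edges, each of half the length). Hence your bound $\ell_i(R)\le\lambda^{i/2}\sqrt{E_D(\Phi_0(X_0|_R))}$ on the maximal edge length, and with it the estimate $d_H(M_i,M_{i+1})\le C\lambda^{i/2}$, is unjustified as derived.

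The conclusion you want is true, but it must be extracted face-wise rather than region-wise, and this is precisely how the paper proceeds. The central $n$-gon $f^{(i+1)}$ produced inside $f^{(i)}$ consists entirely of new vertices, which the projection $\pi_{i+1}$ does not move; therefore Proposition~\ref{prop-m} can be iterated along the nested chain $f^{(0)}, f^{(1)}, f^{(2)},\ldots$ with no edge-sharing factor and no interference from the projection, giving $E_D(\widetilde\Phi_{i+1}(X_{i+1}|_{f^{(i)}}))\le\lambda_1(n)^{i+1}E_D(\Phi_0(\partial f^{(0)}))$. The paper then converts this per-face decay into a Hausdorff bound by Cauchy--Schwarz, namely $\sum_e|\widetilde\Phi_{i+1}(e)|\le\sqrt{2n\,E_D(\widetilde\Phi_{i+1}(X_{i+1}|_{f^{(i)}}))}$, bounds the displacement of the old vertices under $\pi_{i+1}$ by the same quantity, takes the supremum over faces using $\lambda_1=\max_{n\le N}\lambda_1(n)<1$, and sums the geometric series exactly as you do. Your remaining ingredients---the convex-hull (maximum principle) location of new vertices, the $O(\ell_i)$ displacement of old vertices under barycentering, and the leafwise phrasing for infinite $M_0$---are sound and survive unchanged once the energy decay is established per face rather than per region.
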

\begin{proof} 
  Let $f^{(i)}$ be a fixed $n$-gonal face in $X_{i}$, 
  and $f^{(i+1)}$ be the face defined by the inner vertices of 
  $X_{i+1}|_{f^{(i)}}$. 
  Consider the Hausdorff distance 
  \begin{equation}
    \begin{aligned}
      d_H(\Phi_{i}(\partial f^{(i)}), \widetilde{\Phi}_{i+1}(\partial f^{(i+1)}))
      &\le\sum_{e\in X_{i+1}|_{f^{(i)}}} | \widetilde{\Phi}_{i+1}(e)|\\
      &\le\sqrt{2nE_{D}(\widetilde{\Phi}_{i+1}(X_{i+1}|_{f^{(i)}}))}
      \le E_{0}(n)\sqrt{\lambda_{1}^{i+1}(n)},
    \end{aligned}
  \end{equation}
  where 
  $E_{0}(n) \coloneqq \sqrt{2nE_{D}(\Phi_{0}(\partial f^{(0)}))}$ 
  is constant and is determined by 
  $\Phi_{0}\colon X_{0} \to M_{0}$ 
  and 
  $\lambda_{1}(n)=1/(1+4\sin^2(\pi/n))$.
  Since each face of a fixed 3-valent graph has finitely many edges, 
  the number of vertices of each face ($n$ of $n$-gonal face) in $X_{0}$ is bounded from above.
  Letting
  $\lambda_{1} = \max\{ \lambda_{1}(n) \}$, 
  $E = \max\{ E_{0}(n) \}$,
  we have
  \begin{equation}
    \begin{aligned}
      d_{H}(M_{i}, \widetilde{M}_{i+1}) = 
      \sup_{\v{f}^{(i)} \in \MF_{i}} \{d_H(\Phi_{i}(\partial f^{(i)}), \widetilde{\Phi}_{i+1}(\partial f^{(i+1)}))\} 
      \le E\sqrt{\lambda_{1}^{i+1}}.
    \end{aligned}
  \end{equation}
  On the other hand, taking $\v{v} \in \v{f}^{(i)}$, also we have $\v{v} \in \MV_{i}$. 
  Since $\pi_{i+1}(\v{v})$ is the barycenter of its nearest neighbors, it is easy to see
  \begin{equation}
    \begin{aligned}
      d_{H}(\v{v}, \pi_{i+1}(\v{v})) < \sup_{\v{f}^{(i)} \in \MF_{i}} \{d_H(\Phi_{i}(\partial f^{(i)}), \widetilde{\Phi}_{i+1}(\partial f^{(i+1)}))\}
      \le E\sqrt{\lambda_{1}^{i+1}}.
    \end{aligned}
  \end{equation}
  That is,
  \begin{equation}
    \begin{aligned}
      d_{H}(M_{i}, \pi_{i+1}(\widetilde{M}_{i+1})) = d_{H}(M_{i}, M_{i+1}) 
      = \sup \{d_{H}(\v{v}, \pi_{i+1}(\v{v})) \}
      \le E\sqrt{\lambda_{1}^{i+1}}.
    \end{aligned}
  \end{equation}
  Thus 
  for any $\epsilon>0$,  
  let $N=\lceil2\log_{1/\lambda_{1}}(\Lambda/\epsilon)\rceil$. 
  Then for any $i$, $j>N$ $(j>i)$, 
  we have 
  \begin{equation*}
    \begin{aligned}
      d_{H}(M_{i}, M_{j})
      &\le d_{H}(M_{i}, M_{i+1}) 
      + d_{H}(M_{i+1}, M_{i+2}) 
      + \cdots + d_{H}(M_{j-1}, M_{j})\\
      &\le E\Big(\sqrt{\lambda^{i+1}_{1}} + \sqrt{\lambda^{i+2}_{1}} 
      + \cdots + \sqrt{\lambda^{j}_{1}}\Big)\\
      &<\Lambda E \sqrt{\lambda^{i+1}_{1}}\\
      &<\epsilon,
    \end{aligned}
  \end{equation*}
  where 
  $\Lambda = (1+\sqrt{\lambda_{1}})/(1-\lambda_{1})$ 
  is a constant determined by 
  $\Phi_{0}\colon X_{0}\to M_{0}$ as well. 
\end{proof}
\subsection{Monotonicity of the Dirichlet energy}
Let $\{ M_{i} \}_{i}^{\infty}$ be the sequence of discrete surfaces, 
and $\Phi_{i} \colon X_{i} \to M_{i} = (\MV_{i}, \ME_{i}, \MF_{i})$ be a discrete surface at the $i$-th step constructed from an unbranched bounded domain in a discrete surface.
In this subsection, we show the monotonicity of the Dirichlet energy.
It is sufficient to prove it on the  energy of the subdivision sequence constructed from a leaf in $M=M_0$.
The core idea is simple.
When we take a subdivision,
the size of each face gets smaller and smaller, but the number of faces increases.
Fortunately, however, the number of $n$-gonal faces with $n \neq 6$ does not change in the subdivision process, 
but the number of hexagonal faces only increases.
We study the sum of the energy of hexagonal faces to find it well balanced.
\par
For any fixed $i$, $\MF_{i}$ consists of two parts as
\begin{equation}
  \begin{aligned}
    \MF_{i} = \MF_{i}^{n<6} \cup \MF_{i}^{=6} \cup \MF_{i}^{n>6},
  \end{aligned}
\end{equation}
where $\MF_{i}^{n=6}$ is the set of hexagonal faces in  $M_i$  and
$\MF_{i}^{n<6}$ and $\MF_{i}^{n>6}$ are the sets of  $n$-gonal faces in  $M_i$ with $ n < 6$ and $n >6$, respectively.  
Note firstly there is the largest $n$, which we denote $N$, because we are working with a bounded domain, and secondly  we have
\begin{equation}
  \begin{aligned}
    \sharp \MF_{i}^{n<6} = \sharp \MF_{0}^{n<6}, \quad  \sharp \MF_{i}^{n>6} = \sharp \MF_{0}^{n>6}, 
  \end{aligned}
\end{equation}
since the GC-subdivision increases the number of hexagonal faces only.
\par
Let
\begin{displaymath}
  E_{D}(\v{f}) = \sum_{\v{v} \sim \v{w} \in \v{f}} | \v{v}- \v{w} |^2.
\end{displaymath}
\begin{lem}
  \label{lem-e}
  Let $\v{f}$ be an $n$-gonal face in $M_i$, 
  and $\v{f}'$ be a face in $M_{i+1}$ as a solution of the Dirichlet problem with the boundary $\v{f}$, 
  and $\tilde{\v{f}}$ 
  be the solution of the Dirichlet problem with the boundary $\v{f}$ and edges connecting the corresponding vertices (see Figure~\ref{fig:face}).
  Then we obtain 
  \begin{align}
    \label{energy-of-solution0}
    &E_{D}(\v{f}') \le \lambda_1^{2}(n) E_{D}(\v{f}), \\
    \label{energy-of-solution1}
    &E_{D}(\tilde{\v{f}}) \le \lambda_1(n) E_{D}(\v{f}),
  \end{align}
  where $\lambda_1^{2}(n)$ is the spectrum radius of $A(n)$ computed in the previous section.
\end{lem}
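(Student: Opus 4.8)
The plan is to recognize that both estimates are immediate consequences of the computation already carried out in the proof of Proposition \ref{prop-m}; the lemma only repackages the intermediate inequalities (\ref{eq:localenergy0}) and (\ref{eq:localenergy1}) in the notation $E_{D}(\v{f}')$ and $E_{D}(\tilde{\v{f}})$. As there, I would first reduce to the case $i=0$ by the shift-invariance of the construction, and write $\v{f}=\v{f}^{(0)}$ for the boundary $n$-gon and $\v{f}^{(1)}=A(n)\v{f}^{(0)}$ for the inner vertices produced by the Dirichlet minimizer, where $T$ is the cyclic shift and $A(n)=(3I_{n}-T-T^{t})^{-1}$. The key observation is that $\v{f}'$ and $\tilde{\v{f}}$ share the same inner vertices $\v{f}^{(1)}$ and differ only in which edges are counted: counting the inner polygon edges alone gives $E_{D}(\v{f}')=\|\v{f}^{(1)}-T\v{f}^{(1)}\|^{2}$, whereas counting also the $n$ radial edges $\v{v}_{0_{i}}\sim\v{v}_{1_{i}}$ gives $E_{D}(\tilde{\v{f}})=\|\v{f}^{(1)}-T\v{f}^{(1)}\|^{2}+\|\v{f}^{(0)}-\v{f}^{(1)}\|^{2}$, while $E_{D}(\v{f})=\|\v{f}^{(0)}-T\v{f}^{(0)}\|^{2}$.

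For (\ref{energy-of-solution0}), since $A(n)$ is a circulant and hence commutes with $T$, I would write $\v{f}^{(1)}-T\v{f}^{(1)}=A(n)(\v{f}^{(0)}-T\v{f}^{(0)})$ and invoke the orthogonality $(\v{f}^{(0)}-T\v{f}^{(0)})\perp\phi_{0}$ established in (\ref{eq-a}). Because $\phi_{0}$ spans the eigenspace of the top eigenvalue $\lambda_{0}=1$, the operator norm of $A(n)$ restricted to $\phi_{0}^{\perp}$ equals the second eigenvalue $\lambda_{1}(n)=1/(1+4\sin^{2}(\pi/n))$; squaring yields $E_{D}(\v{f}')\le\lambda_{1}^{2}(n)E_{D}(\v{f})$, which is exactly (\ref{eq:localenergy0}).

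For (\ref{energy-of-solution1}), I would note that $E_{D}(\tilde{\v{f}})$ is precisely the total local Dirichlet energy that Proposition \ref{prop-m} bounds, so the estimate is that proposition specialized to $i=0$. Concretely, combining the bound just obtained on $\|\v{f}^{(1)}-T\v{f}^{(1)}\|^{2}$ with the companion bound $\|\v{f}^{(0)}-\v{f}^{(1)}\|^{2}\le\lambda_{1}(n)(1-\lambda_{1}(n))\|\v{f}^{(0)}-T\v{f}^{(0)}\|^{2}$ from that same proof, the two factors telescope through the identity $\lambda_{1}^{2}(n)+\lambda_{1}(n)(1-\lambda_{1}(n))=\lambda_{1}(n)$ to give $E_{D}(\tilde{\v{f}})\le\lambda_{1}(n)E_{D}(\v{f})$, which is (\ref{eq:localenergy1}).

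I do not anticipate a genuine obstacle, since every ingredient is already in hand. The only care needed is conceptual bookkeeping: matching $\v{f}'$ and $\tilde{\v{f}}$ to the correct edge sets, and ensuring the spectral estimate uses $\lambda_{1}(n)$ rather than $\lambda_{0}=1$. The latter hinges entirely on the fact that the discrete difference operator $I-T$ annihilates the constant vector $\phi_{0}$, so that every energy appearing here lives on $\phi_{0}^{\perp}$; this is what forces the contraction factors of $\lambda_{1}(n)$ to appear, and is ultimately the whole content of the lemma.
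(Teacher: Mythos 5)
Your proposal is correct and takes essentially the same route as the paper: the paper's proof of Lemma \ref{lem-e} is precisely the observation that (\ref{energy-of-solution0}) and (\ref{energy-of-solution1}) are the inequalities (\ref{eq:localenergy0}) and (\ref{eq:localenergy1}) already established in the proof of Proposition \ref{prop-m}, once $E_D(\v{f}')$, $E_D(\tilde{\v{f}})$, and $E_D(\v{f})$ are identified with $\| \v{f}^{(1)} - T\v{f}^{(1)}\|^2$, $\| \v{f}^{(1)} - T\v{f}^{(1)}\|^2 + \| \v{f}^{(0)} - \v{f}^{(1)}\|^2$, and $\| \v{f}^{(0)} - T\v{f}^{(0)}\|^2$, respectively. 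The extra details you supply (commutation of $A(n)$ with $T$, orthogonality to $\phi_0$, and the identity $\lambda_1^2(n) + \lambda_1(n)(1-\lambda_1(n)) = \lambda_1(n)$) simply reproduce the content of that earlier proof, so nothing is missing.
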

\begin{proof}
  The inequalities (\ref{energy-of-solution0}) 
  and (\ref{energy-of-solution1}) are equivalent to (\ref{eq:localenergy0}) and (\ref{eq:localenergy1}), respectively.
\end{proof}
\begin{figure}[ht]
  \centering
  \includegraphics[scale=1.00]{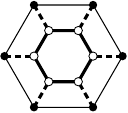}
  \caption{
    Assume $\v{f} \in \MF_i$ consists of thin edges, 
    then $\v{f}' \in \MF_{i+1}$ consists of thick edges, 
    and $\tilde{\v{f}}$ consists of thick dashed edges.
    Here black vertices are vertices in $\MV_i$ and white vertices are in $\MV_{i+1}$.
  }
  \label{fig:face}
\end{figure}
\begin{thm}[monotonicity of  the Dirichlet energy]\label{monotonicity}
  Let $\{ M_{i} \}_{i}^{\infty}$ be the sequence of discrete surfaces constructed from a leaf or a bounded domain.
  The Dirichlet energy of $M_{i}$ is bounded from above by a constant independent of $n$.
  Moreover it monotonically decreases if $M_0$ contains no $n$-gonal faces with $n > 6$ 
  and contains an $n$-gonal face with $n < 6$.
\end{thm}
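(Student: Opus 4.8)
The plan is to compare $E_D(M_{i+1})$ with $E_D(M_i)$ face by face, combining the two estimates of Lemma \ref{lem-e} with the combinatorial fact that a $\GC$-subdivision only creates new hexagons and preserves the number of $n$-gons with $n\neq6$. Working on the given bounded domain (so that the number of faces is finite and the side counts are bounded by some $N$), I would first record the edge structure of a single step: writing $\widetilde{M}_{i+1}$ for the surface before the projection $\pi_{i+1}$, every edge of $\widetilde{M}_{i+1}$ is either an \emph{inner edge}, i.e.\ an edge of some shrunken face $\tilde{\v{f}}$, or a \emph{radial edge} joining a vertex $\v{v}\in\MV_i$ to its copy in an adjacent shrunken face. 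In particular no edge joins two vertices of $\MV_i$, so $\MV_i$ is an independent set in $\widetilde{M}_{i+1}$. Grouping edges by the face of $M_i$ that produced them gives, with no double counting, $E_D(\widetilde{M}_{i+1})=\sum_{f\in F_i}E_D(\tilde{\v{f}})$, and \eqref{energy-of-solution1} of Lemma \ref{lem-e} yields $E_D(\widetilde{M}_{i+1})\le\sum_{f\in F_i}\lambda_1(n_f)\,E_D(\v{f})$.

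I would then dispose of the projection. Since $\MV_i$ is independent in $\widetilde{M}_{i+1}$ and $\pi_{i+1}$ fixes every newly created vertex, replacing each $\v{v}\in\MV_i$ by the barycenter of its neighbours minimizes, simultaneously and independently, the local energy $\sum_{\v{w}\sim\v{v}}|\v{v}-\v{w}|^2$; hence $E_D(M_{i+1})\le E_D(\widetilde{M}_{i+1})$. Together with the incidence bound $\sum_{f\in F_i}E_D(\v{f})\le2E_D(M_i)$ (boundary edges are counted once and only lower the sum) this produces the master inequality
\begin{equation*}
  E_D(M_{i+1})\le E_D(M_i)+\sum_{f\in F_i}\Bigl(\lambda_1(n_f)-\tfrac12\Bigr)E_D(\v{f}).
\end{equation*}

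The monotonicity statement follows at once from $\lambda_1(6)=\tfrac12$ and the monotonicity of $n\mapsto\lambda_1(n)$: if $M_0$, and hence every $M_i$, has no face with $n>6$, every bracket is $\le0$ and $E_D(M_{i+1})\le E_D(M_i)$; a face with $n<6$, which persists under subdivision and has strictly positive boundary energy, contributes a strictly negative bracket and forces strict decrease. For the boundedness statement I would treat the finitely many faces with $n>6$ individually: by \eqref{energy-of-solution0} and the fact that $\pi_{i+1}$ fixes the inner vertices, the boundary energy $a_i^{(f)}:=E_D(\v{f}^{(i)})$ of each fixed $n$-gon obeys $a_{i+1}^{(f)}\le\lambda_1^2(n_f)\,a_i^{(f)}$, so it decays geometrically. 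Dropping the non-positive brackets in the master inequality, the positive remainder is at most $\sum_{n_f>6}(\lambda_1(n_f)-\tfrac12)\,a_i^{(f)}\le C\rho^{\,i}$ with $\rho=\max_{n_f>6}\lambda_1^2(n_f)<1$, and telescoping gives $E_D(M_i)\le E_D(M_0)+C/(1-\rho)$, a bound uniform in $i$.

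The crux is the competition between the geometrically growing number of hexagons and the per-face contraction: the face-by-face passage loses a factor $2$ through the incidence relation, so the estimate only closes because $\lambda_1(6)=\tfrac12$ exactly absorbs that factor, while the excess created by the faces with $n>6$ survives only because those faces are finite in number and energetically decaying, making their total contribution a summable geometric tail. The other delicate point is justifying that the barycentric projection cannot raise the energy even though it displaces all old vertices simultaneously; this is precisely where the independent-set structure of $\MV_i$ in $\widetilde{M}_{i+1}$ is indispensable.
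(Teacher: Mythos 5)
Your proof is correct and follows essentially the same route as the paper: the facewise decomposition $E_D(\widetilde{M}_{i+1})=\sum_f E_D(\tilde{\v{f}})$, the contraction estimates of Lemma \ref{lem-e}, the exact cancellation $\lambda_1(6)=\tfrac12$ against the edge-incidence factor $2$, and the geometric decay $E_D(\v{f}^{(i)})\le\lambda_1(n)^{2i}E_D(\v{f}^{(0)})$ of the finitely many non-hexagonal faces to get a summable tail and hence boundedness. The only difference is that you explicitly justify the step $E_D(M_{i+1})\le E_D(\widetilde{M}_{i+1})$ via the independent-set structure of $\MV_i$ in $\widetilde{M}_{i+1}$ and the minimizing property of the barycenter, a point the paper asserts without proof.
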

\begin{proof}
  Since each edge is shared by two faces, we obtain 
  \begin{equation*}
    E_{D}(M_{i}) = \sum_{\v{e}\in \ME_{i}} |\v{e}|^2 = \frac{1}{2} \sum_{\v{f}^{(i)} \in \MF_{i}} E_{D}(\v{f^{(i)}}).
  \end{equation*}
  For any $\v{f}^{(i)} \in \MF_{i}$, 
  let $\widetilde{\v{f}}^{(i)}$ be the set of vertices of $\v{f}^i$ and $\v{f}^{(i+1)} \in M_{i+1}$, the solution of the Dirichlet problem with the boundary $\v{f}^{(i)}$,  and the edges connecting the corresponding vertices as in Lemma \ref{lem-e}.
  Then, 
  \begin{equation*}
    E_{D}(M_{i+1}) \le E_D(\widetilde{M}_{i+1}) = \sum_{\v{f}^{(i)}\in \MF_{i}} E_{D}(\widetilde{\v{f}}^{(i)}).
  \end{equation*}
  By using Lemma \ref{lem-e}, 
  now we compute the Dirichlet energy of $M_{i+1}$ as 
  \begin{equation}
    \label{eq:4.8}
    \begin{aligned}
      E_D(M_{i+1})
      &\le
      E_D(\widetilde{M}_{i+1}) 
      =
      \sum_{\v{f}\in \MF_i} E_D(\tilde{\v{f}}) 
      \le
      \sum_{\v{f}\in \MF_i} \lambda_1(n) E_D(\v{f}) 
      \\
      &=
      \sum_{\v{f}\in \MF_i^{n<6}} \lambda_1(n) E_D(\v{f}) 
      +
      \frac{1}{2} \sum_{\v{f}\in \MF_i^{n=6}} E_D(\v{f}) 
      +
      \sum_{\v{f}\in \MF_i^{n>6}} \lambda_1(n) E_D(\v{f}) 
      \\
      &=
      \frac{1}{2}
      \left( 
        \sum_{\v{f}\in \MF_i^{n<6}} E_D(\v{f}) 
        +
        \sum_{\v{f}\in \MF_i^{n=6}} E_D(\v{f}) 
        +
        \sum_{\v{f}\in \MF_i^{n>6}} E_D(\v{f}) 
      \right)
      \\
      &+
      \sum_{\v{f} \in \MF_i^{n<6}} \left(\lambda_1(n) - \frac{1}{2}\right) E_D(\v{f})
      +
      \sum_{\v{f} \in \MF_i^{n>6}} \left(\lambda_1(n) - \frac{1}{2}\right) E_D(\v{f})
      \\
      &=
      E_D(M_i)
      +
      \sum_{\v{f} \in \MF_i^{n<6}} \left(\lambda_1(n) - \frac{1}{2}\right) E_D(\v{f})
      +
      \sum_{\v{f} \in \MF_i^{n>6}} \left(\lambda_1(n) - \frac{1}{2}\right) E_D(\v{f}).
    \end{aligned}
  \end{equation}
  We also have
  \begin{equation}
    \label{eq:4.9}
    \begin{alignedat}{3}
      \lambda_1(n) &< 1/2 &\quad &\text{ for } n<6, \\
      \lambda_1(6) &= 1/2, \\
      \lambda_1(n) &< 1 &\quad &\text{ for } 6<n.
    \end{alignedat}
  \end{equation}
  If there are no $n$-gonal faces ($n > 6$) and at least one $n$-gonal face ($n < 6$), 
  then, by (\ref{eq:4.8}), we obtain 
  \begin{displaymath}
    E_D(M_{i+1})
    \le 
    E_D(M_i)
    +
    \sum_{\MF_i^{n<6}} \left(\lambda_1(n) - \frac{1}{2}\right) E_D(\v{f}).
  \end{displaymath}
  Since $\lambda_1(n) < 1/2$ ($n < 6$), we obtain 
  \begin{equation}
    \label{eq:monotone_decreasing}
    E_D(M_{i+1}) <  E_D(M_i).
  \end{equation}
  Inequality (\ref{eq:monotone_decreasing}) implies that
  the Dirichlet energy of $M_i$ is monotonically decreasing.
  \par
  On the other hand, by (\ref{eq:4.8}), we also obtain 
  \begin{equation}
    \label{eq:generalcase}
    \begin{aligned}
      E_D(M_{i+1})
      \le
      E_D(M_i)
      &+
      \sum_{\v{f} \in \MF_0^{n<6}} 
      \left(\lambda_1(n) - \frac{1}{2}\right) \lambda_1(n)^{2i} E_D(\v{f})
      \\
      &+
      \sum_{\v{f} \in \MF_0^{n<6}} 
      \left(\lambda_1(n) - \frac{1}{2}\right) \lambda_1(n)^{2i} E_D(\v{f}).
    \end{aligned}
  \end{equation}
  Hence, we obtain 
  \begin{equation}
    \label{eq:7}
    E_D(M_{i+1}) \le E_D(M_i)
    + 
    \sum_{n=3}^5 
    C_n \lambda_1(n)^{2i}
    +
    \sum_{n=7}^N
    C_n \lambda_1(n)^{2i}, 
  \end{equation}
  where $C_n = (\lambda_1(n) - 1/2) N_n E_n$, $N_n$ is the number of $n$-gonal faces in $M_0$, 
  and $E_n = \max\{E_D(\v{f}) : \v{f} \in M_0, \, \v{f} \text{ is an $n$-gonal face}\}$.
  Note that, for $n \not= 6$, $C_n$ are independent of $i$.
  Finally, we obtain 
  \begin{equation}
    \label{eq:8}
    E_D(M_i)
    \le
    E_D(M_0)
    + \sum_{n=3}^5 C_n \frac{1 - \lambda_1(n)^{2i-2}}{1-\lambda_1(n)^2}
    + \sum_{n=7}^N C_n \frac{1 - \lambda_1(n)^{2i-2}}{1-\lambda_1(n)^2}, 
  \end{equation}
  and (\ref{eq:8}) implies the boundedness of $E_D(M_i)$.
\end{proof}


\section{The limit set $\MM_{\infty}$}
Let
$M_{0} = \{\MV_{0}, \ME_{0}, \MF_{0} \}$ be a $3$-valent graph in $\R^{3}$ and $\{ M_{i} = \{\MV_{i}, \ME_{i}, \MF_{i}\} \}_{i=0}^{\infty}$
be the sequence constructed by the GC-subdivision.
The limit  set in the Hausdorff topology is divided into three kinds:
\begin{displaymath}
  \MM_{\infty} = \MM_{\MF} \cup \MM_{\MV} \cup \MM_{\MS}.
\end{displaymath}
The first two come from accumulating points of the leafwise convergence and the third one emerges as a global accumulation.
\subsection{unbranched surfaces}
\label{sec:5.3}
For a general discrete surface $M$,
we know little about $\MM_{\MS}$ in general,
but under a natural condition, we prove $\MM_\MS$ is empty.
\par
When every edge of $M$ is shared by two faces only, we say $M$ is unbranched.
\begin{thm}
  \label{thm-f}
  Let
  $M_{0} = \{\MV_{0}, \ME_{0}, \MF_{0} \}$
  be a $3$-valent graph in $\E^{3}$ which satisfies
  \begin{enumerate}
  \item
    Each edge of $M_{0}$ is shared by at most two faces.
  \item
    Any two faces intersect at one edge or not at all.
  \item
    The convex hulls $\con(L(\v{f}_1))$ of leaves $L(\v{f}_1)$ and the convex hull $\con(L(\v{f}_2))$ of $L(\v{f}_2)$ 
  intersect when either $\v{f}_1 \sim \v{f}_2$ 
  {\upshape(}$\v{f}_1$ and $\v{f}_2$ share a common edge{\upshape)}
  or there is a connecting face $\v{f}_1 \diamond \v{f}_2$ of $\v{f}_1$ and $\v{f}_2$.
  \end{enumerate}
  Then
  \begin{math}
    \MM_{\infty} = \MM_{\MV} \cup \MM_{\MF}.
  \end{math}
\end{thm}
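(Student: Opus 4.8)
The plan is to show that the stratum $\MM_\MS$ is empty by proving that every accumulation point of $\MM_\infty=\overline{\cup_i M_i}$ is in fact a \emph{leafwise} limit, hence already recorded in $\MM_\MV\cup\MM_\MF$. Recall that, applied to a single leaf $L(\v{f})$, the argument of Theorem~\ref{cauchy} makes the iterated GC-subdivisions of $L(\v{f})$ converge in the Hausdorff topology to a compact continuum which I will call $\Sigma_{\v{f}}$, and that the vertices and face-barycenters produced along the way (the latter converging by Lemma~\ref{lem-d}) are dense in $\Sigma_{\v{f}}$; thus $\Sigma_{\v{f}}\subseteq\overline{\MM_\MV\cup\MM_\MF}$. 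By definition $\MM_\MS$ collects exactly those points of $\MM_\infty$ that lie in no single $\Sigma_{\v{f}}$, i.e. limits of sequences $\v{v}_k\in M_{i_k}$ whose host leaves vary without stabilizing. The whole problem is to rule such points out using the separation encoded in hypotheses (1)--(3).

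First I would establish a \emph{confinement} principle: the iterated subdivisions of a leaf never leave its convex hull, so that $\Sigma_{\v{f}}\subseteq\con(L(\v{f}))$. At a single step this is transparent, because the inner vertices of $\widetilde{\Phi}_{i+1}$ solve the discrete Dirichlet problem with boundary $\partial L(\v{f})$, so the discrete maximum principle places them in the convex hull of the boundary data, while the projection $\pi_{i+1}$ only replaces an old vertex by the barycenter of its neighbours, an operation that also preserves convex hulls. Iterating, every sub-leaf created inside $L(\v{f})$ keeps its vertices in $\con(L(\v{f}))$, and passing to the Hausdorff limit yields $\Sigma_{\v{f}}\subseteq\con(L(\v{f}))$. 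The delicate point is that the outer boundary of a leaf is itself moved by the process; I would handle this by running the induction on the whole initial domain simultaneously, using that hypotheses (1)--(2) force each leaf to be an embedded topological disk so that the leafwise Dirichlet problems do not interfere and the maximum principle applies on each leaf separately.

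Next I would turn the \emph{combinatorial} local finiteness of the graph into the \emph{metric} local finiteness needed for a pigeonhole argument, which is where hypothesis (3) enters. Fix $p\in\R^3$ and set $S=\{\v{f}:p\in\con(L(\v{f}))\}$; if $\v{f}_1,\v{f}_2\in S$ then $\con(L(\v{f}_1))\cap\con(L(\v{f}_2))\ni p$, so by (3) the faces satisfy $\v{f}_1\sim\v{f}_2$ or $\v{f}_1\diamond\v{f}_2$. Since each face of a trivalent surface graph has only finitely many neighbours and finitely many faces at combinatorial distance two, the faces in $S$ are pairwise adjacent-or-connected and $S$ is finite; the same applies to the leaves whose hulls meet a fixed small ball $B_\epsilon(p)$. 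Now let $p\in\MM_\infty$ and choose $\v{v}_k\to p$ with $\v{v}_k\in M_{i_k}$. Each $\v{v}_k$ lies in the subdivision of some initial leaf $L(\v{f}_k)$, so by confinement $\v{v}_k\in\con(L(\v{f}_k))$, and for large $k$ this hull meets $B_\epsilon(p)$. By local finiteness only finitely many distinct leaves occur, so some fixed $L(\v{f}^*)$ hosts infinitely many $\v{v}_k$; as the subdivisions of $L(\v{f}^*)$ converge to $\Sigma_{\v{f}^*}$ we conclude $p\in\Sigma_{\v{f}^*}\subseteq\overline{\MM_\MV\cup\MM_\MF}$. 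Hence $\MM_\MS=\emptyset$ and $\MM_\infty=\MM_\MV\cup\MM_\MF$.

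I expect the confinement principle of the second paragraph to be the main obstacle: the real work is to verify that the maximum principle survives the iteration even though the projection step perturbs the boundary of each leaf, and that hypotheses (1)--(2) genuinely decouple the leafwise Dirichlet problems. Once the inclusion $\Sigma_{\v{f}}\subseteq\con(L(\v{f}))$ is secured, hypothesis (3) does the remaining conceptual work—upgrading the combinatorial neighbour structure of $M_0$ to metric local finiteness—after which the pigeonhole step is routine and the vanishing of $\MM_\MS$ is immediate.
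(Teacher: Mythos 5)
Your first two steps (the confinement principle and the use of hypothesis (3) as a separation tool) are in the same spirit as the paper, which proves exactly the nested-hull lemma $\bigcup_{\v{F}\in\MF(M_{i+1})}\con(L(\v{F}))\subset\bigcup_{\v{f}\in\MF(M_i)}\con(L(\v{f}))$. But your final step has a fatal gap: after the pigeonhole you conclude only that $p\in\Sigma_{\v{f}^*}\subseteq\overline{\MM_{\MV}\cup\MM_{\MF}}$. That closure statement is vacuous. By definition $\MM_{\infty}=\overline{\cup_i M_i}=\overline{\MM_{\MV}}$, so $\MM_{\infty}\subseteq\overline{\MM_{\MV}\cup\MM_{\MF}}$ holds with no hypotheses whatsoever; the density of vertices in $\Sigma_{\v{f}}$ is true by the very definition of a Hausdorff limit and gives nothing. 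The content of Theorem \ref{thm-f} is precisely that the closure is unnecessary: every accumulation point that is not an actual vertex is an actual face-barycenter limit, i.e.\ $\MM_{\MV}\cup\MM_{\MF}$ is already closed. Your reduction to a single leaf does not touch this question, because a priori the exceptional set $\MM_{\MS}$ could be nonempty \emph{inside} the limit set $\Sigma_{\v{f}^*}$ of one leaf; in effect your opening sentence (``hence already recorded in $\MM_{\MV}\cup\MM_{\MF}$'') assumes the statement to be proven.

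What is missing is the paper's stage-by-stage face-tracking argument, which is where hypothesis (3) actually does its work. Given $\v{x}_{\infty}\notin\MM_{\MV}$ and a convergent sequence of vertices $\v{x}_i\in M_i$, one per stage, the paper uses (3) to show that if $\v{x}_i$ lies on a face $\v{f}_i$, then $\v{x}_{i+1}$ can only lie on the solution face $\mathcal{A}\v{f}_i$ or on a connecting face $\mathcal{A}\v{f}_i\diamond\mathcal{A}\v{f}_{i,\alpha}$; the connecting-face case is then excluded by contradiction with the choice of the sequence, so the vertices are trapped in the nested faces $\mathcal{A}^k\v{f}_i$, whose energies (hence diameters) decay by Proposition \ref{prop-m} and which collapse to the common barycenter of Lemma \ref{lem-d}, giving $\v{x}_{\infty}\in\MM_{\MF}$. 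Your pigeonhole only localizes the sequence to one initial leaf and never forces it into such a nested sequence of faces, so the limit cannot be identified with a point of $\MM_{\MF}$. A secondary issue: your metric local-finiteness claim does not follow from (3) alone, since two leaf hulls that both meet a small ball $B_{\epsilon}(p)$ need not meet each other; finiteness of the relevant leaf family really comes from boundedness (finitely many faces in $M_0$) or a properness assumption on the embedding, not from the pairwise-intersection hypothesis.
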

In the following, we prove Theorem \ref{thm-f}.
A leaf with an $n$-gonal face $\v{f}$ as its core is a set
\begin{displaymath}
  L(\v{f}) =\{ \v{f}, \v{f}_1, \cdots, \v{f}_n\}
\end{displaymath}
of $\v{f}$ and the neighboring faces $\v{f}_{\alpha}$, $\alpha = 1, \ldots, n$ of $\v{f}$.
The set of vertices of faces belonging to $L(\v{f})$ is denoted by $\MV(L(\v{f}))$.
\begin{lem}
  \begin{displaymath}
    \bigcup_{\v{F} \in \MF(M_{i+1})} \con (L(\v{F})) \subset 
    \bigcup_{\v{f} \in \MF(M_i)} \con (L(\v{f})), 
  \end{displaymath}
  where $\con(\Omega)$ is the convex hull of the set $\Omega$.
\end{lem}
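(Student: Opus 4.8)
The statement is a nesting claim: the union of convex hulls of all leaves at stage $i+1$ sits inside the union of convex hulls of all leaves at stage $i$. The natural strategy is to argue \emph{vertexwise} and then invoke convexity. Every vertex of $M_{i+1}$ arises in one of two ways from the GC-subdivision: either it is a vertex $\v{v} \in \MV_i$ that has been relocated to the barycenter of its neighbors by the projection $\pi_{i+1}$, or it is an \emph{inner} vertex $\v{w} \in \MV_{i+1}^{i}$ produced as a solution of the leafwise Dirichlet problem. The plan is to show that in either case the new vertex lies in $\bigcup_{\v{f} \in \MF(M_i)} \con(L(\v{f}))$, and that moreover whenever a collection of new vertices forms a face $\v{F} \in \MF(M_{i+1})$, they all lie in the convex hull of a \emph{single} stage-$i$ leaf (or a bounded union of adjacent leaves controlled by hypotheses (2) and (3)). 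Since $\con(L(\v{F}))$ is the convex hull of those vertices, the nesting then follows by taking convex hulls, using that the convex hull of a subset of a convex set is contained in that set.

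\emph{Key steps, in order.} First I would fix a face $\v{F} \in \MF(M_{i+1})$ and trace which stage-$i$ leaf (or leaves) its vertices come from. By the structure of the GC-construction recalled in Section~\ref{sec:3}, the subdivision is carried out leafwise: a leaf $L(\v{f})$ with an $n$-gonal core in $M_i$ subdivides into a smaller $n$-gon surrounded by $n$ hexagons. Hence each face $\v{F}$ of $M_{i+1}$ is geometrically contained in (the subdivision of) one parent leaf $L(\v{f})$ of $M_i$, or straddles two adjacent ones. Second, I would verify that the relocated old vertices $\pi_{i+1}(\v{v})$, being barycenters of their neighbors, lie in the convex hull of those neighbors and hence in $\con(L(\v{f}))$ for an appropriate parent face. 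Third, for the inner vertices I would use that the Dirichlet minimizer satisfies the averaging identity $-\v{v}_{1_{i-1}} + 3\v{v}_{1_i} - \v{v}_{1_{i+1}} = \v{v}_{0_i}$ derived in the proof of Proposition~\ref{prop-m}; solving shows each inner vertex is a \emph{convex} combination (positive weights summing to one, since $A(n)$ has nonnegative entries with row sums one) of the boundary vertices $\v{v}_{0_i}$, which are exactly the vertices of the parent face. Consequently every inner vertex lies in $\con$ of the parent face's vertices, a fortiori in $\con(L(\v{f}))$.

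\emph{Assembling the inclusion.} Having placed every vertex of every $\v{F}$ inside the convex hull of a stage-$i$ leaf, I would conclude $\con(L(\v{F})) \subset \con(L(\v{f}))$ for the corresponding parent $\v{f}$, and then take the union over all $\v{F}$; this is where hypotheses (2) and (3) of Theorem~\ref{thm-f} enter, guaranteeing that the parent leaves of neighboring child faces overlap correctly so that no child leaf escapes the stage-$i$ union.

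\textbf{Main obstacle.} The delicate point is the bookkeeping at the \emph{boundary} between leaves: a child face $\v{F}$ near the rim of a parent leaf may have vertices drawn from two adjacent parent leaves, so I cannot always attribute $\con(L(\v{F}))$ to a single $\con(L(\v{f}))$. The crux of the argument is to show that the leaf $L(\v{F})$ in $M_{i+1}$ — which includes $\v{F}$ \emph{together with its own neighbors} — still fits inside the union of the convex hulls of the parent leaves meeting it, and this is precisely what the intersection condition~(3) (via the notions $\v{f}_1 \sim \v{f}_2$ and the connecting face $\v{f}_1 \diamond \v{f}_2$) is designed to secure. Verifying that the positivity/convex-combination property of $A(n)^{-1}$ genuinely yields convex (not merely affine) combinations — i.e. confirming the sign pattern and row-sum-one property of the inverse circulant — is the essential algebraic lemma underpinning the whole argument, and I expect the boundary matching to be the step requiring the most care.
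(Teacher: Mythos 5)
Your proposal follows essentially the same route as the paper's own proof: the paper likewise classifies each face of $M_{i+1}$ as either a Dirichlet-solution face $\v{f}' = \mathcal{A}\v{f}$ or a connecting face $\v{f}' \diamond \v{f}_{\alpha}'$ straddling two parent leaves, expresses every vertex of the child leaf as a convex combination of vertices of the parent leaf(ves) — using exactly the row-stochastic, nonnegative structure of $A(n)$ and the barycenter action $\pi$ that you identify — and concludes by taking convex hulls, with hypothesis (3) of Theorem \ref{thm-f} governing which parent leaves a straddling face can touch. The boundary bookkeeping you flag as the main obstacle is precisely the paper's second (connecting-face) case, so your plan and the paper's argument coincide.
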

\begin{proof} 
In the subdividing process, we have two kinds of faces; the first kind is
obtained as a solution $f'$ of the Dirichlet problem with the boundary
condition $\v{f}$ by the equation (\ref{3.2}). 
Let us denote $\v{f}' = \mathcal{A}\v{f}$. 
Note that
\begin{displaymath}
  \MV(\v{f}') = A \MV (\v{f}),
\end{displaymath}
and the leaf with $\v{f}'$ as its core is
\begin{displaymath}
  L(\v{f}') = \{ \v{f}',\, \v{f}' \diamond \v{f}_1', \cdots, \v{f}' \diamond \v{f}_n' \}.
\end{displaymath}
\par
The second kind is a face connecting two faces $\v{f}' = \mathcal{A}(\v{f})$ and
$\v{f}_{\alpha}' = \mathcal{A}(\v{f}_{\alpha})$ ($\alpha \in \{1, \cdots n\}$) of
the first kind. We denote it $\v{f}' \diamond \v{f}_{\alpha}'$.  
The set of its vertices is
\begin{displaymath}
  \MV(\v{f}'\diamond \v{f}_{\alpha}') \subset
  \MV(\v{f}') \cup \MV(\v{f}_{\alpha}') \cup \MV(\v{f})
  = A\MV(\v{f}) \cup A\MV(\v{f}_{\alpha})\cup \Pi \MV(\v{f}),
\end{displaymath}
and the leaf with $\v{f}'\diamond \v{f}_{\alpha}'$ as its core is
\begin{displaymath}
  L(\v{f}'\diamond \v{f}_{\alpha}') = \{  \v{f}'\diamond \v{f}_{\alpha}', \v{f}', \v{f}_{\alpha}',
  \v{f}'\diamond \v{f}_{\alpha-1}',  \v{f}'\diamond \v{f}_{\alpha+1}', \v{f}_{\alpha}' \diamond
  \v{f}_{\alpha-1}',  \v{f}_{\alpha}' \diamond \v{f}_{\alpha+1}' \}.
\end{displaymath}
\par
Let $\MF(M_i)$ be the set of all faces in $M_i$ and notice that for a given $\v{F}\in \MF(M_{i+1})$, 
there is a face $\v{f} \in \MF(M_i)$ such that $\v{F} \in L(\v{f})$.
More precisely $\v{F}$ is either a solution face $\v{f}' = \mathcal{A}\v{f}$ or a
connecting face $\v{f}' \diamond \v{f}_{\alpha}'$.
\begin{figure}[ht]
  \centering
  \subfigure[]{\label{fig:leaffigue:a}\includegraphics[scale=0.80]{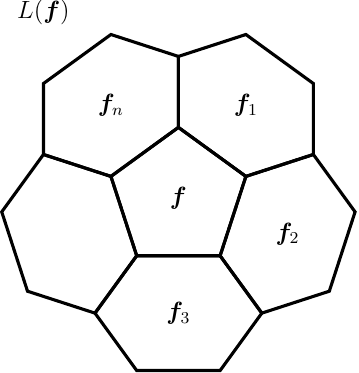}}
  \subfigure[]{\label{fig:leaffigue:b}\includegraphics[scale=0.80]{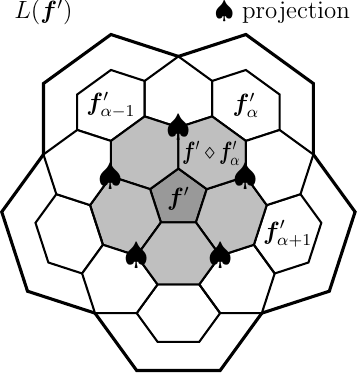}}
  \subfigure[]{\label{fig:leaffigue:c}\includegraphics[scale=0.80]{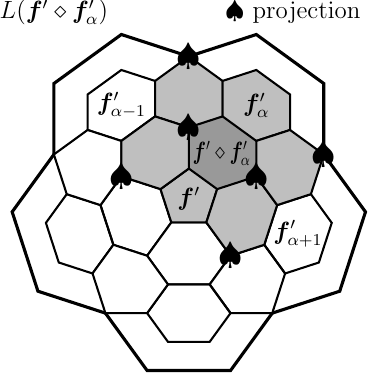}}
  \caption{
    \ref{sub@fig:leaffigue:a}
    $L(\v{f}) = \{\v{f}, \v{f}_1, \ldots, \v{f}_n\}$, 
    \ref{sub@fig:leaffigue:b}
    $L(\v{f}')$ consists of the gray face and light-gray faces,
    \ref{sub@fig:leaffigue:c}
    $L(\v{f}' \diamond \v{f}'_{\alpha})$ consists of the gray face and light-gray faces.
    }
  \label{fig:leaffigure}
\end{figure}
\par
For the first case, namely
for $\v{F} = \v{f}' = \mathcal{A}\v{f}$ in $\MF(M_{i+1})$ with $\v{f} \in \MF(M_i)$,
we have
\begin{math}
  \con (L(\v{F})) \subset \con(L(\v{f})).
\end{math}
Since we have the relation
\begin{displaymath}
  \MV (L(\v{f}))  \subset A \MV(\v{f})  \cup \left(\cup_{\alpha} A\MV(\v{f}_{\alpha})\right) \cup \pi\MV(\v{f}),  
\end{displaymath}
where $\pi$ is the action of taking the barycenter of the three nearest neighboring vertices.
They are all combination of elements of $\MV(L(\v{f}))$, 
and this relation yields the claim.
\par
For the second case, namely for
$\v{F} = \v{f}'\diamond \v{f}_{\alpha}'$ in $\MF(M_{i+1})$ with $\v{f} \in \MF(M_i)$ and a neighboring face $\v{f}_{\alpha}$ of $\v{f}$,
\begin{displaymath}
  \con (L(\v{F})) \subset \con( L(\v{f}) ) \cup \con (L(\v{f}_{\alpha})).
\end{displaymath}
We also have the relation
\begin{displaymath}
  \MV (L(\v{F})) \subset \pi \MV(\v{f}) \cup  \pi \MV(\v{f}_{\alpha}) \cup A \MV(\v{f}) \cup A\MV(\v{f}_{\alpha}) \cup A\MV(\v{f}_{\alpha-1}) \cup A \MV(\v{f}_{\alpha +1}),  
\end{displaymath}
where $\pi$ is the action of taking the barycenter of the three nearest neighboring vertices, 
and here we use the vertices of $L(\v{f}_{\alpha})$ only.
Therefore elements in $\MV (L(\v{F}))$ are again all combination of elements of $\MV(L(\v{f}))$ and $\MV(L(\v{f}_\alpha))$.
\par
Putting those two cases together, we have
\begin{displaymath}
  \bigcup_{\v{F} \in \MF(M_{i+1})} \con (L(\v{F})) \subset 
  \bigcup_{\v{f} \in \MF(M_i)} \con (L(\v{f})). 
\end{displaymath}
\end{proof}
We define
\begin{displaymath}
  \MC_{i}: = \bigcup_{\v{f} \in \MF(M_i)} \con (L(\v{f})).  
\end{displaymath}
The lemma gives
\begin{displaymath}
  \MM_{\infty} \subset \cdots \subset \MC_{i+1} \subset \MC_{i} \subset \cdots \subset \MC_{0}.  
\end{displaymath}
\par
For any $\v{x}_{\infty} \in \MM_{\infty}$, assume that $\v{x}_{\infty} \notin \MM_{\MV}$ and take a sequence of vertices $\v{x}_k$ such that $\lim \v{x}_k = \v{x}_{\infty}$.
Because $\v{x}_{\infty} \notin \MM_{\MV}$, we can assume no two $\v{x}_k$ and $\v{x}_j$ are in the same stage,
i.e., there is a unique $\v{x}_i \in \MF(M_i)$ for every $i$ without loss of generality.
\par
Let $\v{x}_i \in \v{f}_i$ and $\v{x}_{i+1} \in \v{f}_{i+1}$, then we have
\begin{displaymath}
  \v{f}_{i+1} = \mathcal{A} \v{f}_i
\end{displaymath}
or 
\begin{displaymath}
  \v{f}_{i+1} = \mathcal{A}\v{f}_i \diamond \mathcal{A} \v{f}_{i,\alpha}, \quad \v{f} \sim \v{f}_{i, \alpha} \in \MF(M_{i}), 
\end{displaymath}
since we assume the two convex hulls of leaves $L(\v{f})$ and $L(\v{f}_{\alpha})$ 
intersect only when $\v{f} \sim \v{f}_{\alpha}$ or there is a connecting face $\v{f} \diamond \v{f}_{\alpha}$.
\par
In the latter case, $\v{x}_{i+1} \in \v{f}_i \cap \v{f}_{i,\alpha}$, which contradicts the choice of the sequence. 
Therefore
\begin{displaymath}
  \v{x}_{i+1} \in \mathcal{A}\v{f}_i, \quad \v{x}_i \in \v{f}_i.
\end{displaymath}
That implies $\v{x}_{\infty}$ is an accumulation point of a face, 
that is, $\v{x}_{\infty} \in \MM_{\MF}$, 
completing the proof of Theorem \ref{thm-f}.
\subsection{The limit sets associated with faces.}
The $\MM_{\MF}$ is the set of accumulating points associated with each face in $M_i$. We have the following
\begin{prop}
  \begin{equation*}
    \MM_\MF \coloneqq \bigcup_i  \{ \v{f}^{(i)}_{\infty} \mid \text{ the barycenter of all faces } \v{f}^{(i)} \in M_{i} \}.
  \end{equation*}
\end{prop}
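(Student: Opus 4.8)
The plan is to collapse the entire statement into one linear-algebraic fact about the matrix $A(n)$ already introduced in Proposition \ref{prop-m}, and to read off both the convergence of the core faces and the identification of the limit with the barycenter directly from its spectrum. First I would observe that, for a fixed core face $\v{f}^{(i)}$, one GC-subdivision produces an inner $n$-gon whose vertices obey (\ref{3.2}), i.e.\ $\v{f}^{(i)}_{1} = A(n)\v{f}^{(i)}$, and that the projection $\pi$ displaces only the old (boundary) vertices while fixing the freshly created inner vertices that make up this inner $n$-gon. Hence the core face at stage $i+k$ is untouched by the projection and is produced by iterating the same matrix:
\begin{equation*}
  \v{f}^{(i)}_{k} = A(n)^{k}\,\v{f}^{(i)}_{0}, \qquad \v{f}^{(i)}_{0} = \v{f}^{(i)},
\end{equation*}
where $A(n)$ acts coordinatewise on the $\R^3$-valued array of the $n$ vertices.

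Next I would invoke the spectral data from Proposition \ref{prop-m}: $A(n)$ is symmetric with eigenvalues $\lambda_{k}(n) = 1/(1+4\sin^2(k\pi/n))$, the top one $\lambda_{0}(n)=1$ being simple with eigenvector $\phi_{0}=(1,\ldots,1)^{t}$, while every other eigenvalue is bounded by $\lambda_{1}(n) = 1/(1+4\sin^2(\pi/n))<1$. Splitting each coordinate as $\v{f}^{(i)}_{0} = c^{0}\phi_{0} + (\v{f}^{(i)}_{0})_{\perp}$ with $(\v{f}^{(i)}_{0})_{\perp}\perp\phi_{0}$ — exactly the decomposition used around (\ref{eq-a}) — gives
\begin{equation*}
  A(n)^{k}\v{f}^{(i)}_{0} = c^{0}\phi_{0} + A(n)^{k}(\v{f}^{(i)}_{0})_{\perp}, \qquad
  \bigl\| A(n)^{k}(\v{f}^{(i)}_{0})_{\perp}\bigr\| \le \lambda_{1}(n)^{k}\,\bigl\|(\v{f}^{(i)}_{0})_{\perp}\bigr\|.
\end{equation*}
Since $\lambda_{1}(n)<1$, the orthogonal part decays geometrically, so $\v{f}^{(i)}_{k}\to c^{0}\phi_{0}$. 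As $c^{0}\phi_{0}$ has all $n$ entries equal to $c^{0}$, every vertex of $\v{f}^{(i)}_{k}$ converges to the single point $c^{0}$, which establishes existence and uniqueness of the accumulation point $\v{f}^{(i)}_{\infty}=c^{0}$ together with convergence of the whole sequence $\{\v{f}^{(i)}_{k}\}$.

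Finally I would identify $c^{0}$ as the barycenter and prove its invariance along the sequence. Pairing with $\phi_{0}$ yields $c^{0} = \langle \v{f}^{(i)}_{0},\phi_{0}\rangle/\langle\phi_{0},\phi_{0}\rangle = \tfrac{1}{n}\sum_{j}\v{v}_{j}$, the barycenter of $\v{f}^{(i)}$. Moreover, because $A(n)$ is symmetric and $A(n)\phi_{0}=\phi_{0}$, the barycenter of $\v{f}^{(i)}_{k}=A(n)^{k}\v{f}^{(i)}_{0}$ equals $\tfrac{1}{n}\langle A(n)^{k}\v{f}^{(i)}_{0},\phi_{0}\rangle = \tfrac{1}{n}\langle \v{f}^{(i)}_{0},\phi_{0}\rangle = c^{0}$ for every $k$, so this barycenter is common to the original face and to all of its subdivisions, and coincides with the limit $\v{f}^{(i)}_{\infty}$. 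Taking the union over all faces and stages then gives the asserted description of $\MM_{\MF}$. The step needing genuine care — the main obstacle — is the bookkeeping of the first paragraph: verifying that $\pi$ never moves the core-face vertices, so that the clean iteration $A(n)^{k}$ holds exactly rather than up to an accumulating perturbation; once that is secured, everything else is the geometric-series estimate already packaged in Proposition \ref{prop-m}.
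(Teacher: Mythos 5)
Your argument is correct, and it reaches the conclusion by a genuinely different route than the paper. The paper splits the statement into two separate mechanisms: Lemma \ref{lem-d} shows $\v{f}^{(i+1)} = A\v{f}^{(i)} \subset \con(\v{f}^{(i)})$ using that $A(n)$ has nonnegative entries and unit row sums, and the same computation gives that all the faces $\v{f}^{(i+k)}$ share one barycenter $\v{f}^{b}$; then the energy estimate of Proposition \ref{prop-m} gives $E_{D}(\v{f}^{(i+k)}) \le \lambda_{1}^{k} E_{D}(\v{f}^{(i)}) \to 0$, so the faces degenerate to a single point $\v{f}^{\infty}$; finally $\v{f}^{\infty} = \v{f}^{b}$ because both points lie in the convex hull of every $\v{f}^{(i+k)}$, whose diameter shrinks to zero. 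You instead run a single spectral-projection (power-iteration) argument: writing $\v{f}^{(i)} = c^{0}\phi_{0} + \v{f}_{\perp}$, the symmetry of $A(n)$ and the simplicity of the eigenvalue $1$ give $\| A(n)^{k}\v{f}_{\perp}\| \le \lambda_{1}(n)^{k}\|\v{f}_{\perp}\|$, so convergence to a point and the identification of that point as the barycenter $c^{0}$ come in one stroke. Each approach buys something: yours needs only the spectral facts already recorded in Proposition \ref{prop-m} and entirely avoids the entrywise nonnegativity of $A(n)$, a point the paper's Lemma \ref{lem-d} asserts without real justification (it is true because $3I_{n}-T-T^{t}$ is a diagonally dominant M-matrix, hence has an entrywise nonnegative inverse, but that deserves a line of proof); the paper's route, in exchange, produces the convex-hull lemma that is reused later as the nesting $\MC_{i+1}\subset\MC_{i}$ in the proof of Theorem \ref{thm-f}, so it is not wasted work there. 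Also to your credit, you make explicit the bookkeeping that the projection $\pi_{i+1}$ fixes the new vertices $\MV_{i+1}^{i}$, so the exact recursion $\v{f}^{(i+k)} = A(n)^{k}\v{f}^{(i)}$ holds with no accumulated perturbation; the paper uses this silently when it writes $\v{f}^{(i+k)} = A^{k}\v{f}^{(i)}$.
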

Recall for a leaf with its center $\v{f}^{(i)}$, which is an $n$-gon in $M_{i}$, 
its GC-subdivision is an $n$-gon $\v{f}^{(i+1)}$ in $M_{i+1}$ and its neighboring $n$ hexagons (see Figure~\ref{fig-b}). 
\begin{lem}
  \label{lem-d}
  For any $\v{f}^{(i)} \in \MF_{i}$, 
  let $\v{f}^{(i+1)} = A\v{f}^{(i)}$.
  Then in the sense of vertices of a face,
  \begin{displaymath}
    \v{f}^{(i+1)} \subset \con (\v{f}^{(i)}).
  \end{displaymath}
\end{lem}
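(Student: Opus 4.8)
The plan is to reduce this geometric containment to a single linear-algebraic fact about the matrix $A(n)$: that it is row-stochastic with nonnegative entries. Recall from \eqref{3.2} that the subdivided face satisfies $\v{f}^{(i+1)} = A(n)\v{f}^{(i)}$ with $A(n) = (3I_n - T - T^t)^{-1}$, so that the $j$-th new vertex is $\v{v}_{1_j} = \sum_k A(n)_{jk}\,\v{v}_{0_k}$, a linear combination of the vertices of $\v{f}^{(i)}$. If every row of $A(n)$ consists of nonnegative entries summing to $1$, then each $\v{v}_{1_j}$ is a \emph{convex} combination of the vertices of $\v{f}^{(i)}$, which is exactly the assertion $\v{f}^{(i+1)} \subset \con(\v{f}^{(i)})$ in the sense of vertices.

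First I would dispatch the row-sum condition. Since $T$ and $T^t$ are cyclic permutation matrices, $T\phi_0 = T^t\phi_0 = \phi_0$ for $\phi_0 = (1,\ldots,1)^t$, so $(3I_n - T - T^t)\phi_0 = \phi_0$ and hence $A(n)\phi_0 = \phi_0$. This is precisely the statement that each row of $A(n)$ sums to $1$; it is the same observation ($\lambda_0(n)=1$ with eigenvector $\phi_0$) already used in the proof of Proposition \ref{prop-m}.

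The main step is the nonnegativity of the entries of $A(n)$. Here I would write $3I_n - T - T^t = 3I_n - C$, where $C = T + T^t$ is the adjacency matrix of the $n$-cycle: a nonnegative matrix with spectral radius $2$. Because $\|C/3\| = 2/3 < 1$, the Neumann series converges and yields
\begin{equation*}
  A(n) = (3I_n - C)^{-1}
  = \frac{1}{3}\Big(I_n - \tfrac{1}{3}C\Big)^{-1}
  = \frac{1}{3}\sum_{m=0}^{\infty} \Big(\tfrac{1}{3}C\Big)^m.
\end{equation*}
Every power $C^m$ has nonnegative entries since $C$ does, so the sum is entrywise nonnegative, and therefore $A(n) \ge 0$ entrywise. (Equivalently, $3I_n - T - T^t$ is a symmetric, strictly diagonally dominant matrix with nonpositive off-diagonal entries, hence a nonsingular M-matrix, whose inverse is known to be entrywise nonnegative.) Together with the row-sum identity, this shows $A(n)$ is row-stochastic.

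I expect the only genuine obstacle to be this nonnegativity claim, and the Neumann-series identity settles it cleanly: the circulant structure of $3I_n - T - T^t$ guarantees summability through the explicit bound $\|C/3\| = 2/3$. Everything else is a direct translation between the matrix statement and the convex-hull containment, so once $A(n)$ is recognized as row-stochastic the lemma follows immediately.
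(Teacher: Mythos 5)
Your proof is correct and follows essentially the same route as the paper's: the paper likewise reduces the lemma to the fact that $A(n)$ is row-stochastic, writing $\v{v}_{(i+1)_l} = \sum_m a_{lm}\v{v}_{i_m}$ with $\sum_m a_{lm} = 1$ and $a_{lm} \ge 0$, and gets the row sums from $A \cdot \vecone = \vecone$ exactly as you do. The one point where you go beyond the paper is the entrywise nonnegativity: the paper simply asserts $a_{lm} \ge 0$ with no justification, whereas your Neumann-series argument $(3I_n - C)^{-1} = \tfrac{1}{3}\sum_{m \ge 0} (C/3)^m$ with $C = T + T^t \ge 0$ and $\|C/3\| = 2/3 < 1$ (equivalently, the M-matrix observation) actually proves it. So on the only genuinely nontrivial step your write-up is more complete than the paper's own proof.
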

\begin{proof}
  Let $A = (a_{lm})$ ($l,\,m=0,\,1,\ldots,n-1$), 
  $\v{f}^{(i)} = (\v{v}_{i_0}, \v{v}_{i_1}, \ldots, \v{v}_{i_{n-1}})^t$. 
  \par
  Noticing that $A \cdot 1 = 1$,   
  then for any $l$, 
  \begin{equation}
    \sum_{m=0}^{n-1}a_{lm}\v{v}_{i_{m}} = \v{v}_{(i+1)_{l}},
  \end{equation}
  where 
  \begin{equation}
    \sum_{m=0}^{n-1} a_{lm} = 1,\quad a_{lm} \geq 0.
  \end{equation}
  That is, 
  $$\v{f}^{(i+1)} \subset \con (\v{f}^{(i)}).$$ 
\end{proof}
\par
It also shows $\v{f}^{(i)}$ and $\v{f}^{(i+1)}$ share the same barycenter $\v{f}^{b}$.
Furthermore, 
since $\v{f}^{(i+k)} = A^k \v{f}^{(i)}$, 
by Proposition \ref{prop-m}, 
\begin{equation}
  E_{D}(\v{f}^{(i+k)}) < \lambda^{k}_{1} E_{D}( \v{f}^{(i)} ) \to 0
  \text{ as }k\to \infty, 
\end{equation}
which means $\v{f}^{(i)}$ degenerates to a single point as $i$ goes to $\infty$. 
We call this point $\v{f}^{\infty}$ the accumulation point associated with $\v{f}^{(i)}$.
It is easy to see for any $k$, 
$\v{f}^{\infty}$ and $\v{f}^{b}$ are lying in the convex hull of $\v{f}^{(i+k)}$.
Therefore $\v{f}^{\infty} = \v{f}^{b}$.
\subsection{The limit sets associated with vertices}
$\MM_{\MV}$ is the set of all vertices,  i.e., 
\begin{equation}
  \mathcal{M}_{\MV} = \bigcup_{i} M_i.
\end{equation}
The convergence to a point in $\MM_{\MV}$ is pathological, 
although we have the energy monotonicity formula (Theorem \ref{monotonicity}).
It seems a balancing condition plays an important role.
\par
For example,
when we take the atomic configuration of the fullerene \ce{C60},
a polygonal graph on the sphere, which does not satisfy the balancing condition,
we obtain a pathological shape as the limit of its subdivisions.
 It seems the modified method gives a better convergence than the original method proposed earlier \cite{Tao}.
For numerical calculations for \ce{C60}, Mackay crystal of type P and their subdivisions, see Section \ref{sec:numerical}.


\section{Examples: \ce{C60} and Mackay crystals of type P}
\label{sec:numerical}
We include the numerical tests on both \ce{C60} and the Mackay crystal 
(Figure~\ref{fig:C60-Mackay}).
Figures~\ref{fig:planarity}-\ref{fig:mackay:curvature}
and Tables~\ref{table:c60:curvature}-\ref{table:mackay:curvature} 
are at the end of this paper.
\begin{figure}[H]
  \centering
  \subfigure[]{\label{fig:C60-Mackay:C60}\includegraphics[width=80pt]{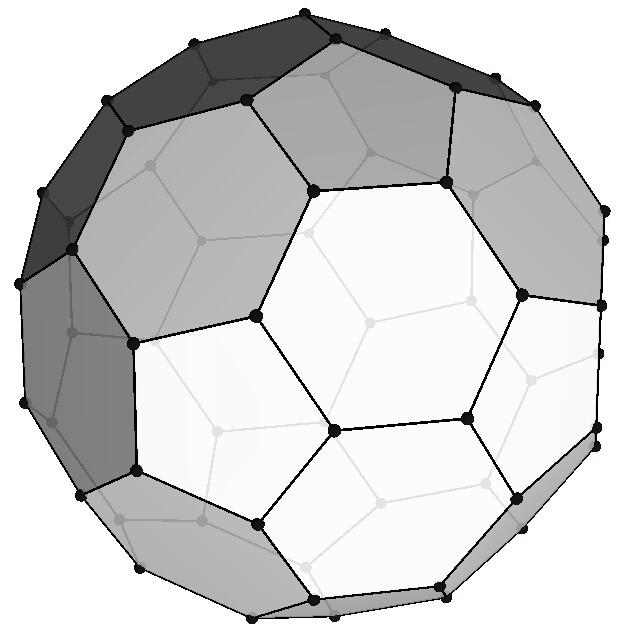}}
  \hspace{0.1in}
  \subfigure[]{\label{fig:C60-Mackay:Mackay}\includegraphics[width=80pt]{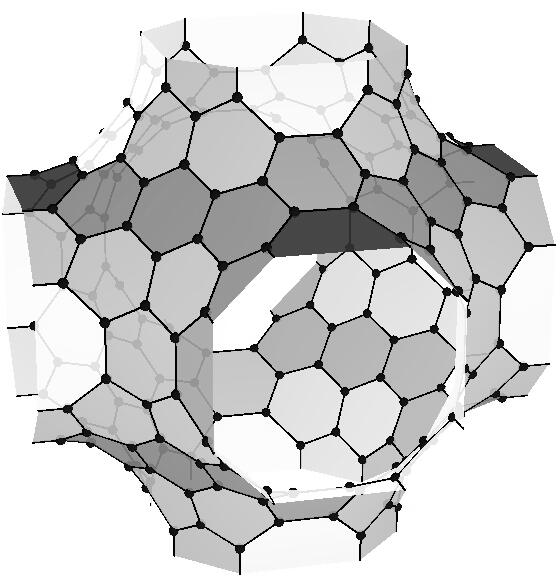}}
  \caption{
    \ref{sub@fig:C60-Mackay:C60}
    \ce{C60}, 
    \ref{sub@fig:C60-Mackay:Mackay}
    Mackay crystal of type P.
  }
  \label{fig:C60-Mackay}
\end{figure}
\par
The Mackay crystal is a carbon network introduced by Mackay and Terrones \cite{Mackay-Terrones:1991} as  a discrete triply periodic minimal surface (Schwarz P surface). 
In \cite{Kotani-Naito-Omori}, its geometry is carefully studied. 
\par
\ce{C60} is the atomic structure of the famous fullerene and is studied as a carbon network on the sphere.
Because each carbon atom has three bonds, we can apply our method to study its subdivisions.
\ce{C60} is a good test to illustrate difficulties in Discrete Surface Theory as it has ``positive curvature'' and does not satisfy the balancing condition. 
\par
Firstly we point out the Mackay crystal satisfies the balancing condition (as it is a discrete minimal surface) while \ce{C60} is not.
In the present paper, we modify the original subdivision method we used in \cite{Tao}.
Numerical computations (Figure \ref{fig:planarity}) show we have smooth convergence to the Mackay crystal in both the original and modified subdivisions, 
while a better situation appears in the modified rather than the original method with  \ce{C60}.
We observe singularities appearing at the vertices of the given discrete surface and that of the subdivided discrete surfaces and therefore modify the original subdivision method so that vertices at each step satisfy the balancing conditions (see the first row in Figure~\ref{fig:planarity}).
\par
Now we exhibit numerical tests of curvatures.
The Gauss curvature and mean curvature of the sequence of subdivisions constructed from the Mackay crystal 
(Figure \ref{fig:mackay:curvature} and Table \ref{table:mackay:curvature})
are computed and showed their convergence.
The Gauss curvature remains negative and the mean curvature goes to zero.
In particular, the limit surface exactly is the Schwarz P surface (minimal surface).
The Gauss curvature and mean curvature of the sequence of subdivisions constructed from the \ce{C60} 
(Figure \ref{fig:c60:curvature} and Table \ref{table:c60:curvature}) are also computed.
The convergence is better in the modified version.
Curvature seems to concentrate at the barycenter of the pentagons in the modified version and at all vertices in the original version. 
\par
Lastly, we include the graph of the Dirichlet energy (Figure \ref{fig:energy}, cf. Theorem \ref{monotonicity}).
\ce{C60} has hexagonal faces and pentagonal faces.
The energy monotonically decreases since it has no face with $n$-gons ($n>6$).
On the other hand, the Mackay has hexagonal faces and octagonal faces.
The energy monotonically increases since it has no face with $n$-gons ($n<6$) but we have an upper bound for the energy. 
It would be interesting to study the regularity of the convergence.
\begin{figure}[H]
  \centering
  \begin{tabular}{cc}
    \includegraphics[width=200pt]{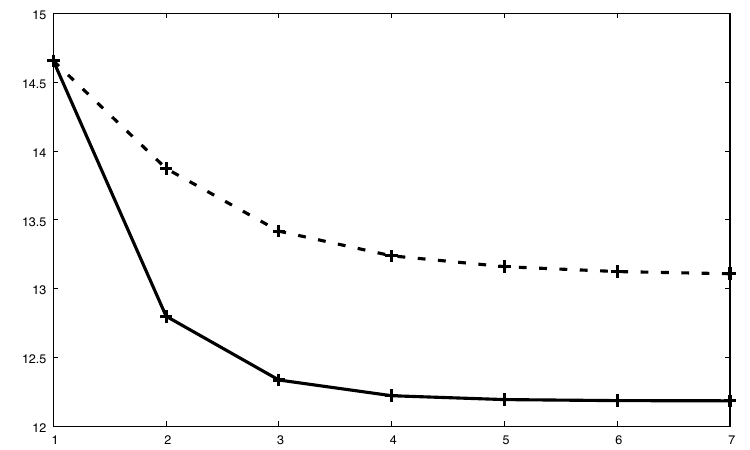}
    &\includegraphics[width=200pt]{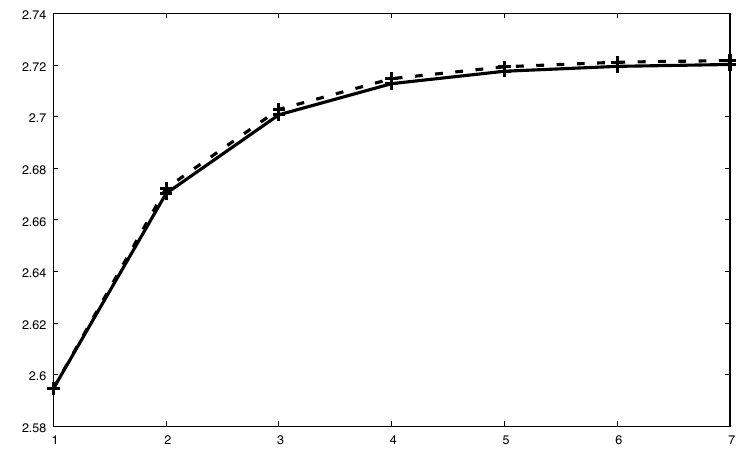}\\
    \ce{C60}&Mackay crystal
  \end{tabular}
  \caption{
    The Dirichlet energies for subdivision of \ce{C60} and Mackay crystal.
    Dashed line: original subdivisions.
    Solid line: modified subdivisions.
  }
  \label{fig:energy}
\end{figure}


\section{Branched surface: the $K_4$-lattice}
\label{section:k4}
In this section, we study an example of a branched surface whose limit surface is branched.
When a face $f$ has a branched edge, 
i.e., an edge which is shared with more than two faces, 
the Goldberg-Coxeter construction cannot be done for the whole graph 
but only for the leaf with the central face $f$. 
For each leaf, we take the subdivision process 
and obtain its limit surface as we proved in the previous section. 
\par
Now we see such an example.
The $K_{4}$-lattices is a triply periodic trivalent graph in $\R^3$ discovered by Sunada \cite{Sunada}, 
as one of the two structures satisfy the strong-isotropic property.  
The $K_{4}$-lattice is branched, actually, as each edge is shared by 10 faces. 
The numerical computation shows each leaf of $K_{4}$ converges to a smooth leaf.
Two leaves, however, can have all common neighboring faces 
but still not converge to the same leaf. 
\begin{figure}[ht]
  \centering
  \subfigure[]{\label{fig:leaf:K4}\includegraphics[width=80pt]{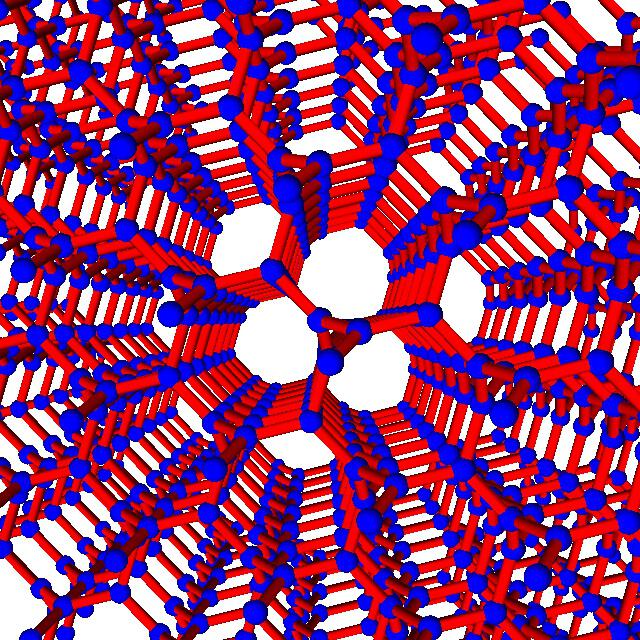}}
  \hspace{0.1in}
  \subfigure[]{\label{fig:leaf:leaf01}\includegraphics[width=80pt]{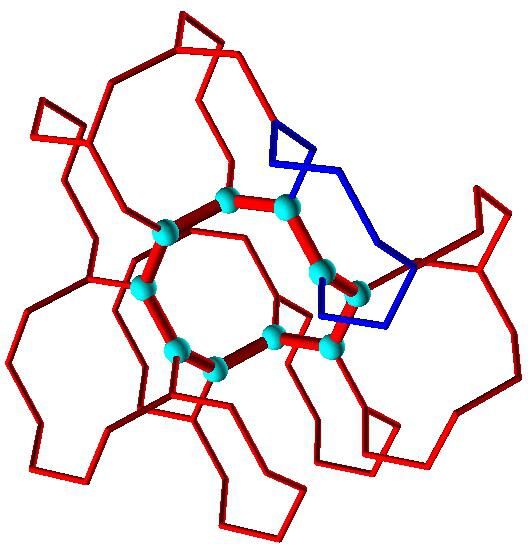}}
  \hspace{0.1in}
  \subfigure[]{\label{fig:leaf:leaf02}\includegraphics[width=80pt]{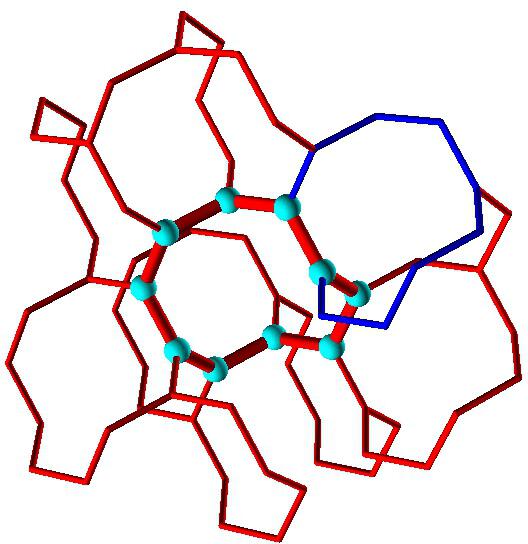}}
  \hspace{0.1in}
  \subfigure[]{\label{fig:leaf:leaf11}\includegraphics[width=80pt]{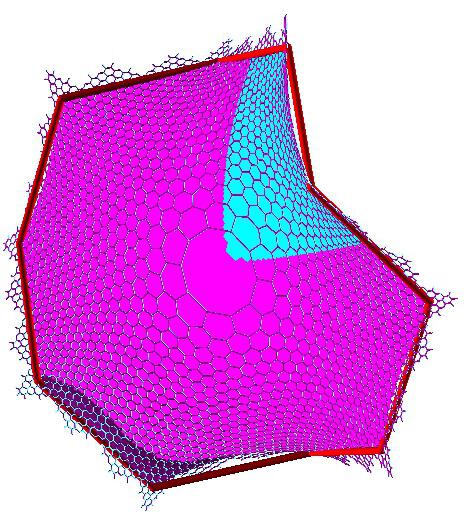}}
  \hspace{0.1in}
  \subfigure[]{\label{fig:leaf:leaf12}\includegraphics[width=80pt]{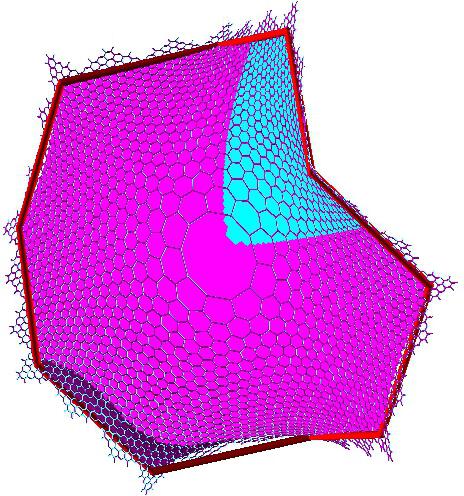}}
  \caption{
    \ref{sub@fig:leaf:K4} K4 lattice: triply periodic lattice.
    \ref{sub@fig:leaf:leaf01} and \ref{sub@fig:leaf:leaf02} Leaves with a central face $\v{f}_0$ (drawn by bold lines), 
    $\v{f}_0$  has 10 different leaves and each leaf has $\v{f}_0$ as its center and
    ten neighboring faces. \ref{sub@fig:leaf:leaf01} and \ref{sub@fig:leaf:leaf02} have 9 common faces ($\v{f}_2, \ldots, \v{f}_9$)
    (drawn by red thin lines) and a different face (drawn by blue thin lines)
    ($\v{f}_{1a}$ in \ref{sub@fig:leaf:leaf01}, $\v{f}_{1b}$ in \ref{sub@fig:leaf:leaf02}), respectively.
    \ref{sub@fig:leaf:leaf11} limit set constructed from the leaf 
    $L_a = \{\v{f}_0,\v{f}_{1a},\v{f}_2,\ldots,\v{f}_9\}$.
    \ref{sub@fig:leaf:leaf12} limit set constructed from the leaf $L_b =\{\v{f}_0, \v{f}_{1b}, \v{f}_2,\ldots, \v{f}_9\}$, 
    \ref{sub@fig:leaf:leaf11} and \ref{sub@fig:leaf:leaf12}
    are the results of five-time subdivisions and the mesh is 
    showing three-time subdivisions. The blue areas are different parts of two limit sets.
  }
  \label{leaf}
\end{figure}



\begin{figure}[H]
  \centering
  \begin{tabular}{l|cccc}
    &3&4&5&6\\
    \hline
    \raisebox{35pt}{\parbox[c]{50pt}{original\\\ce{C60}}}
    &\includegraphics[width=80pt]{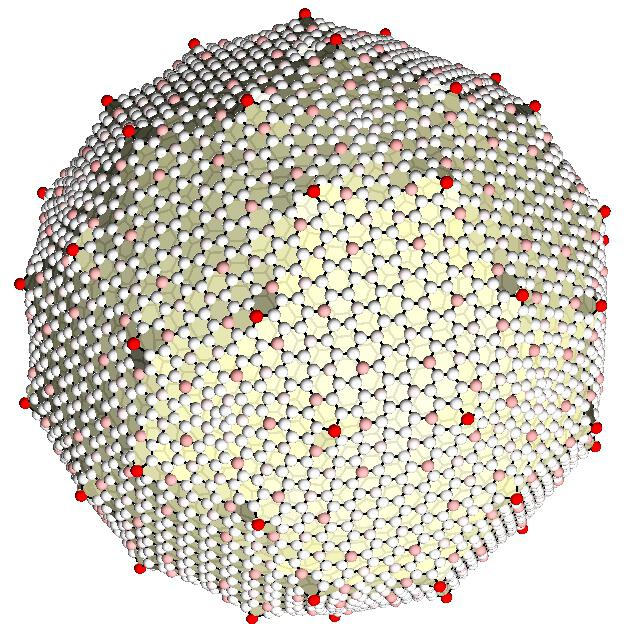}
      &\includegraphics[width=80pt]{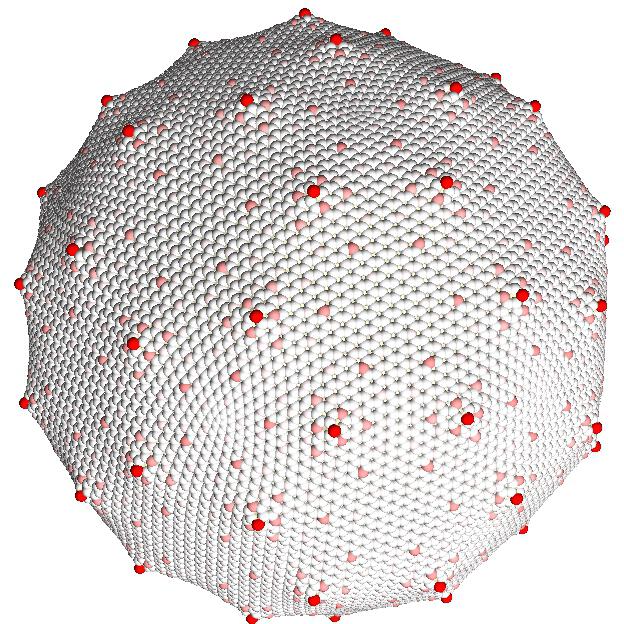}
        &\includegraphics[width=80pt]{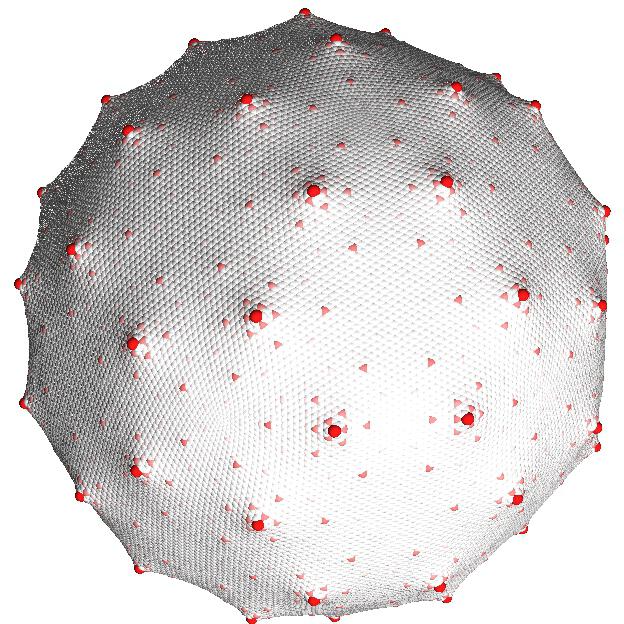}
          &\includegraphics[width=80pt]{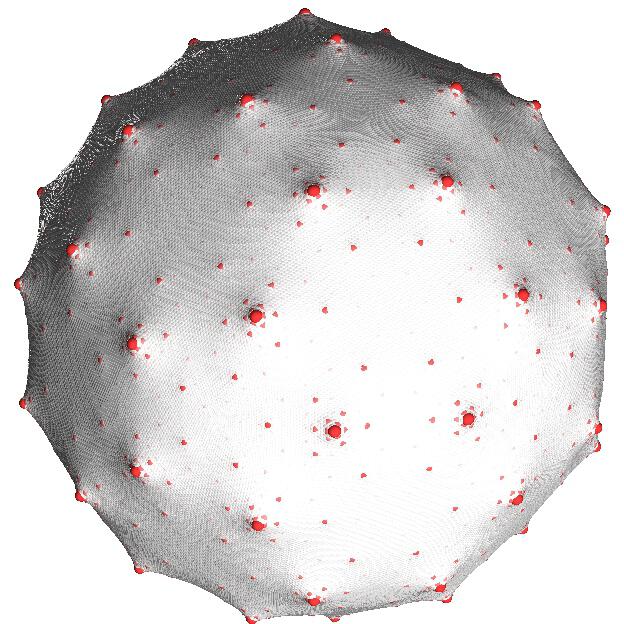}\\
    \hline
    \raisebox{35pt}{\parbox[c]{50pt}{modified\\\ce{C60}}}
    &\includegraphics[width=80pt]{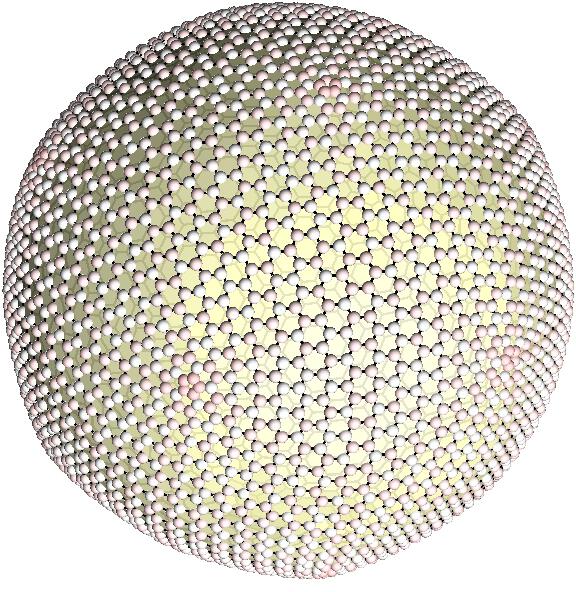}
      &\includegraphics[width=80pt]{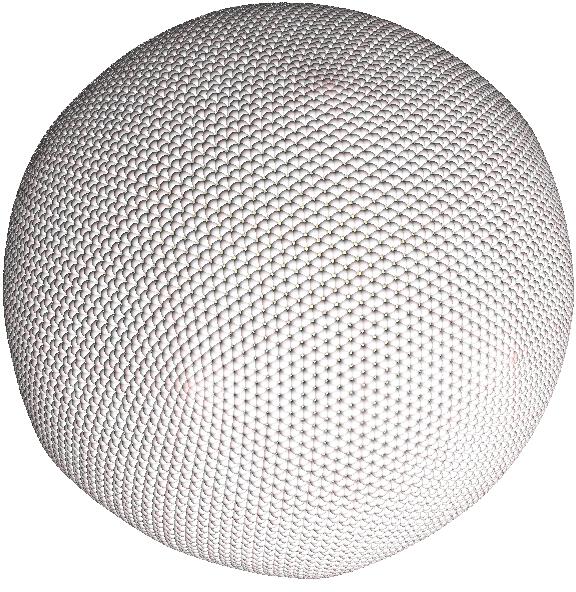}
        &\includegraphics[width=80pt]{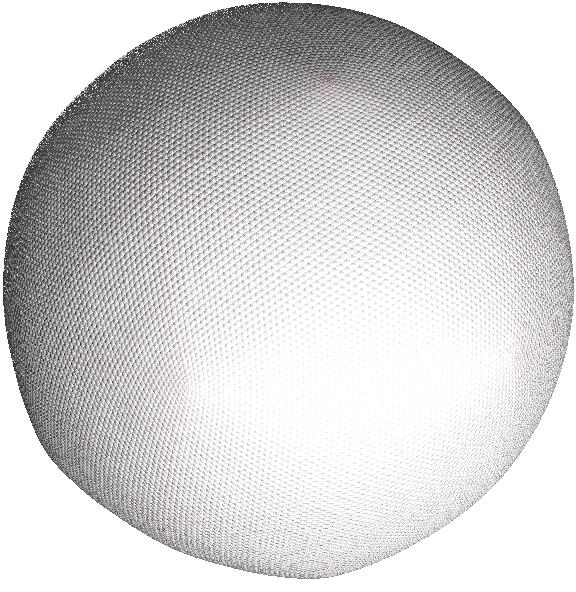}
          &\includegraphics[width=80pt]{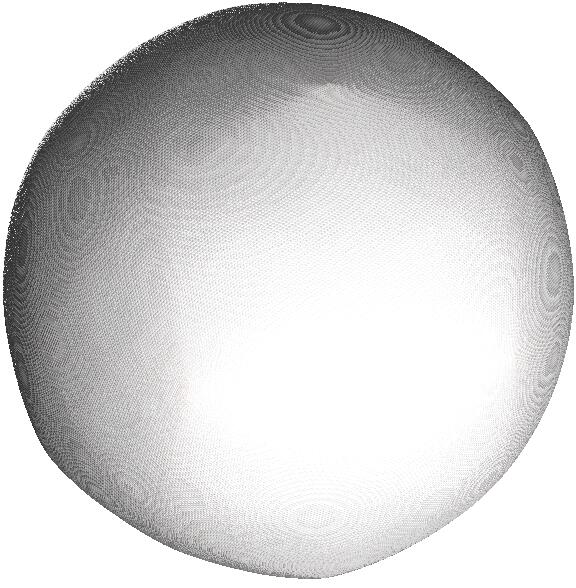}\\
    \hline
    \hline
    \raisebox{35pt}{\parbox[c]{50pt}{original\\ Mackay}}
    &\includegraphics[width=80pt]{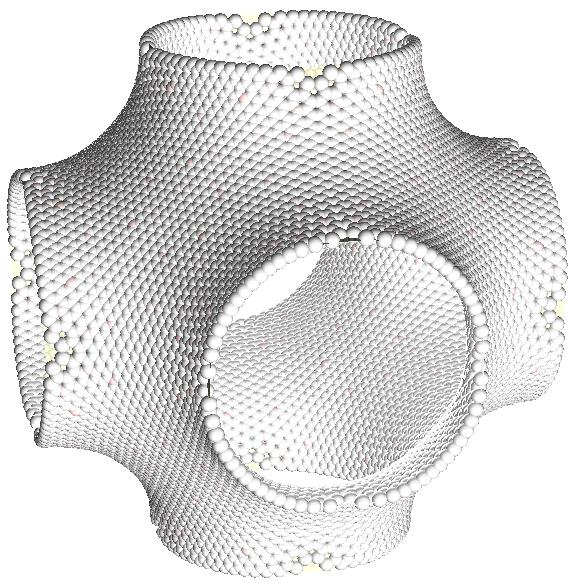}
      &\includegraphics[width=80pt]{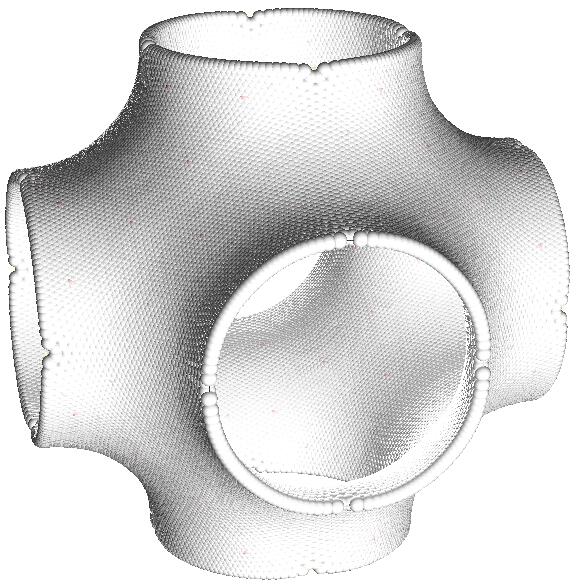}
        &\includegraphics[width=80pt]{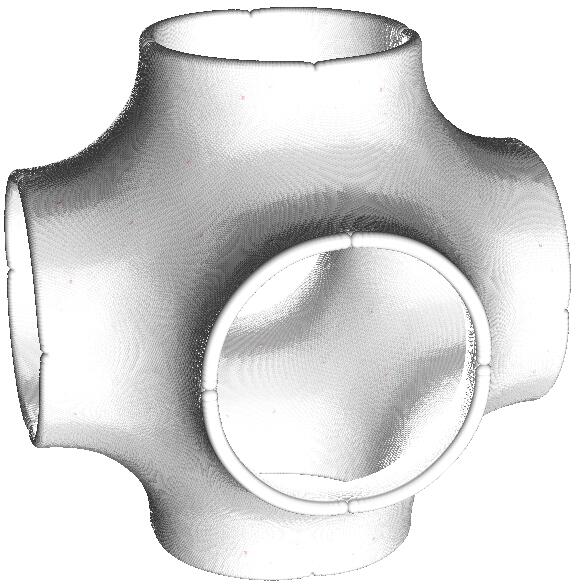}
          &\includegraphics[width=80pt]{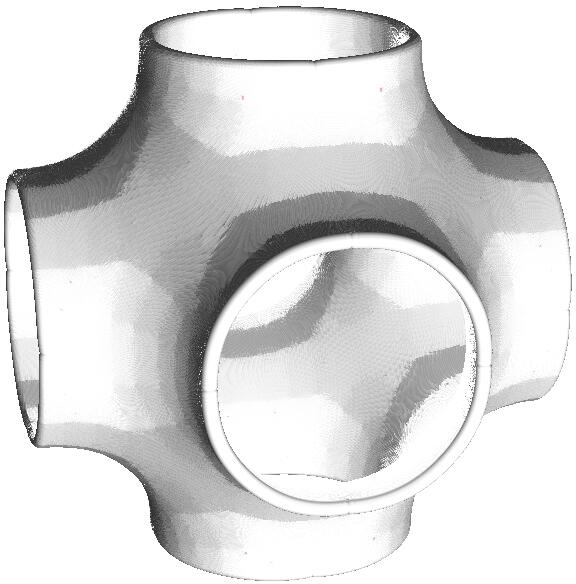}\\
    \hline
    \raisebox{35pt}{\parbox[c]{50pt}{modified\\ Mackay}}
    &\includegraphics[width=80pt]{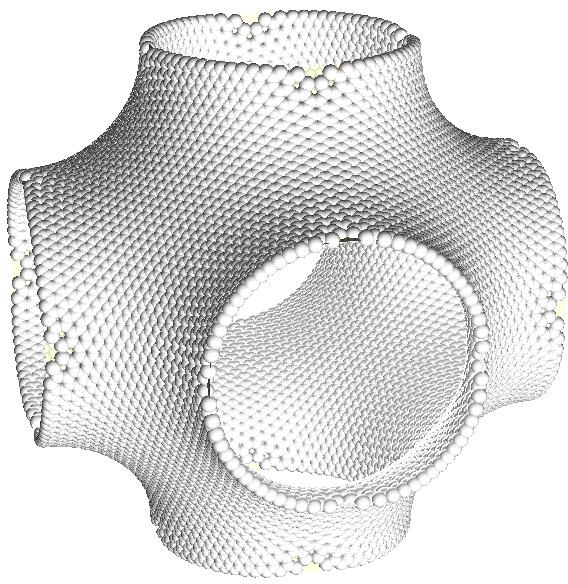}
      &\includegraphics[width=80pt]{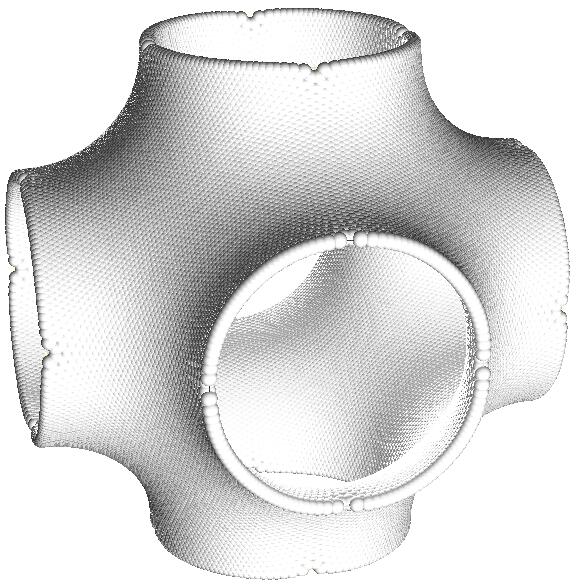}
        &\includegraphics[width=80pt]{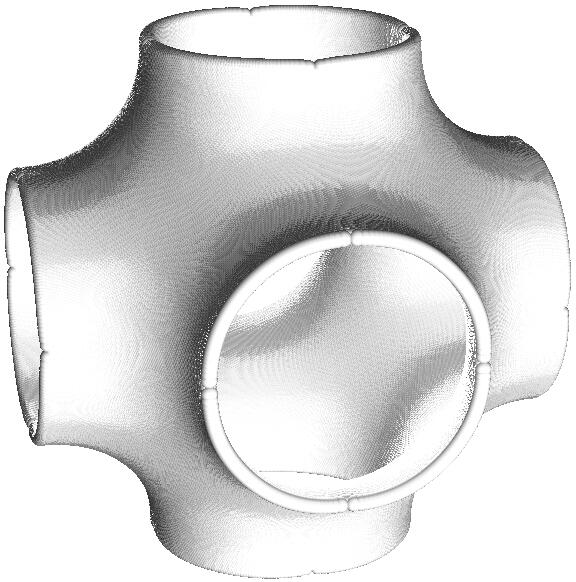}
          &\includegraphics[width=80pt]{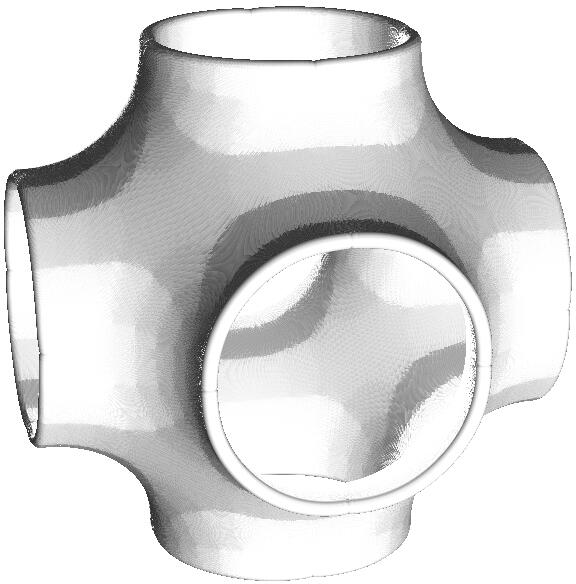}\\
  \end{tabular}
  \caption{
    Numerical computations of subdivisions (from three times up through six times) of \ce{C60} and Mackay crystal.
    Red points in original subdivisions of \ce{C60} are angled, 
    which are located at original vertices of it (see Figure~\ref{fig:C60-Mackay:C60}).
  }
  \label{fig:planarity}
\end{figure}
\begin{figure}[H]
  \centering
  \begin{tabular}{l|cccc}
    &3&4&5&6\\
    \hline
    \raisebox{35pt}{\parbox[c]{50pt}{original\\Gauss curvature}}
    &\includegraphics[width=80pt]{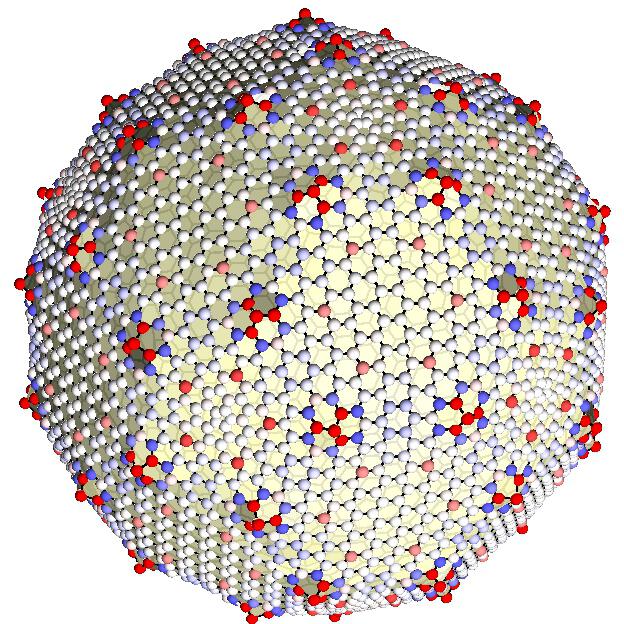}
      &\includegraphics[width=80pt]{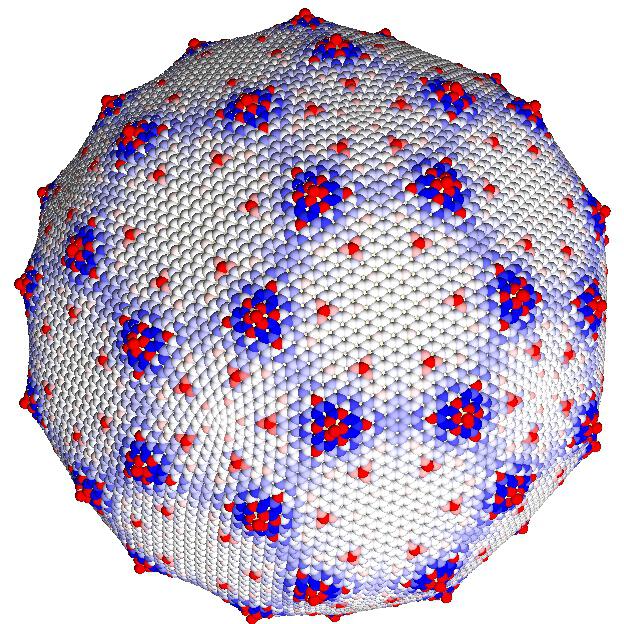}
        &\includegraphics[width=80pt]{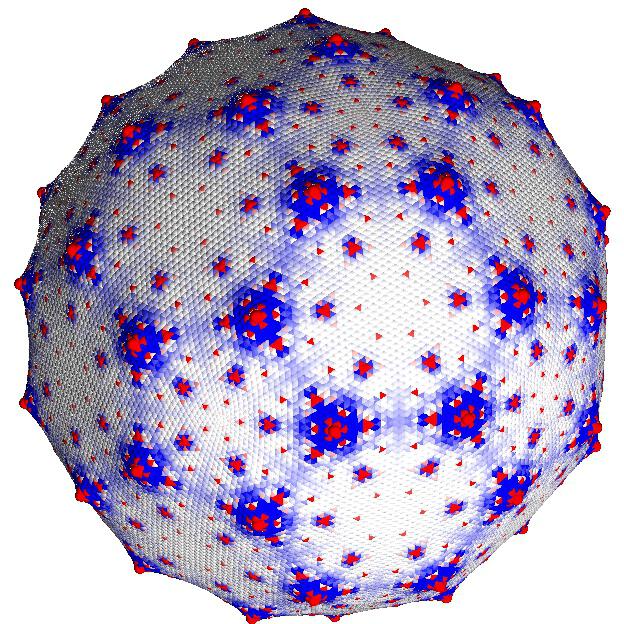}
          &\includegraphics[width=80pt]{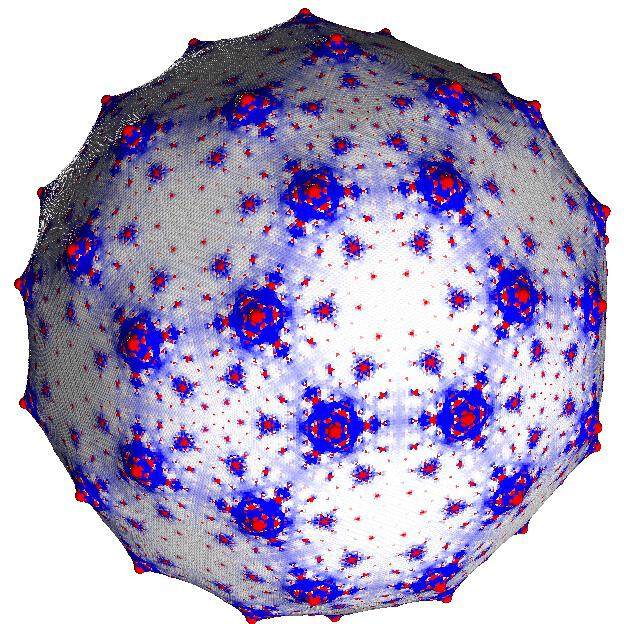}\\
    \hline
    \raisebox{35pt}{\parbox[c]{50pt}{modified\\Gauss curvature}}
    &\includegraphics[width=80pt]{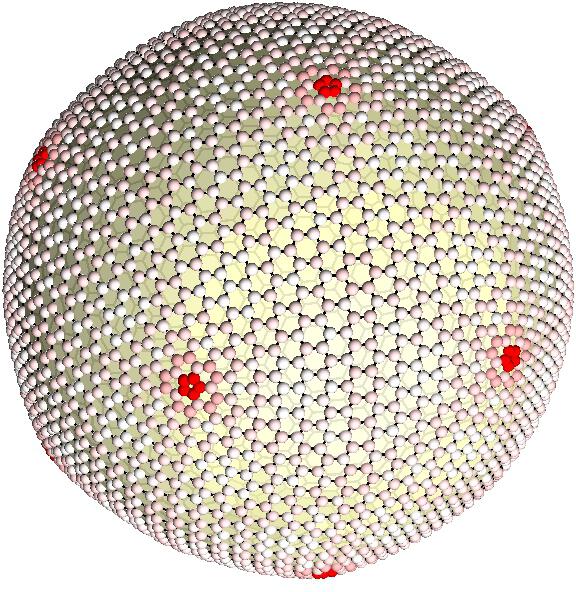}
      &\includegraphics[width=80pt]{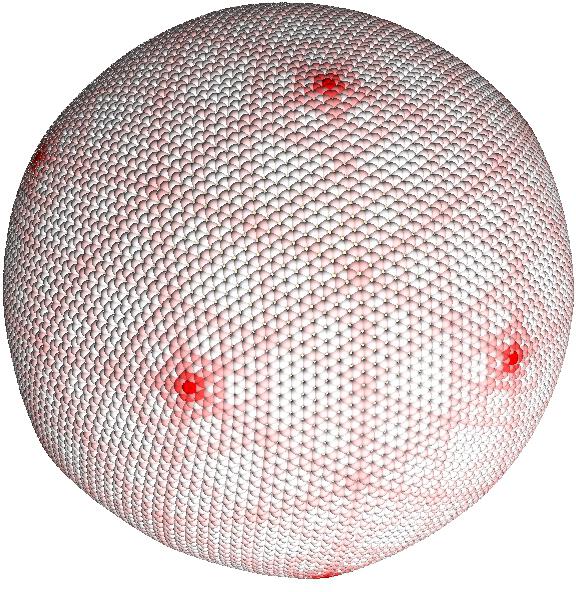}
        &\includegraphics[width=80pt]{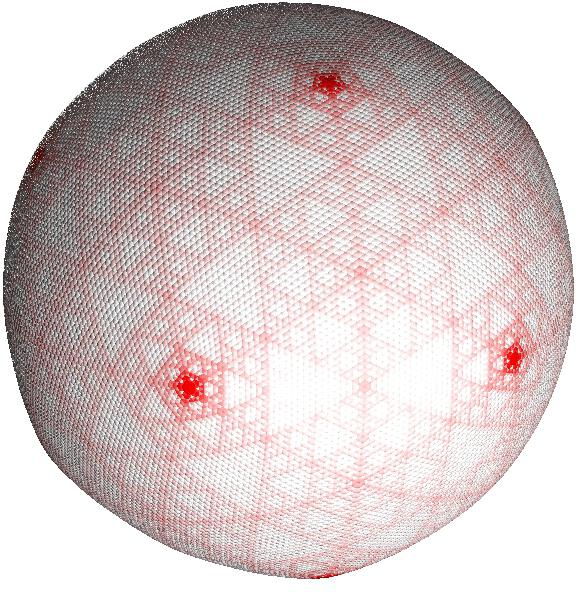}
          &\includegraphics[width=80pt]{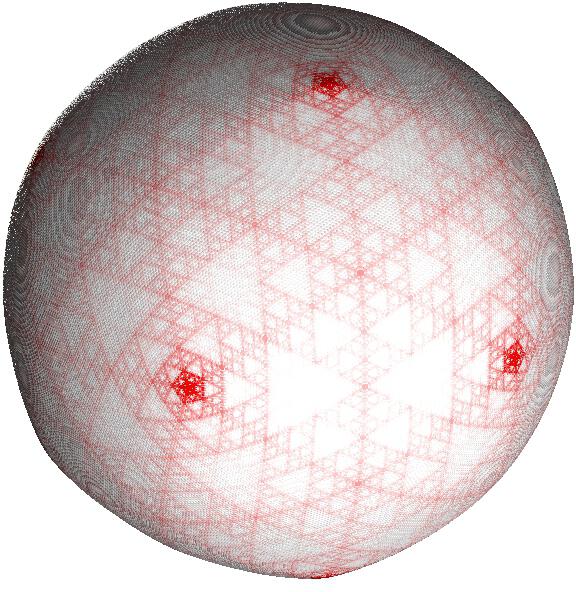}\\
    \hline
    \hline
    \raisebox{35pt}{\parbox[c]{50pt}{original\\mean curvature}}
    &\includegraphics[width=80pt]{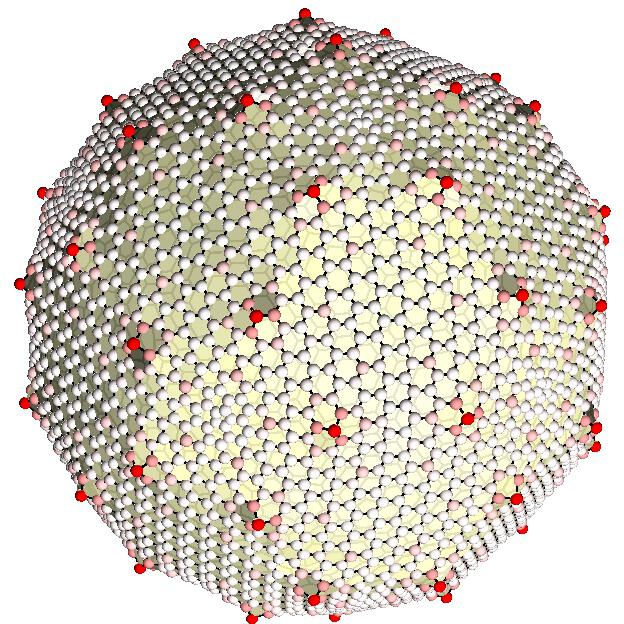}
      &\includegraphics[width=80pt]{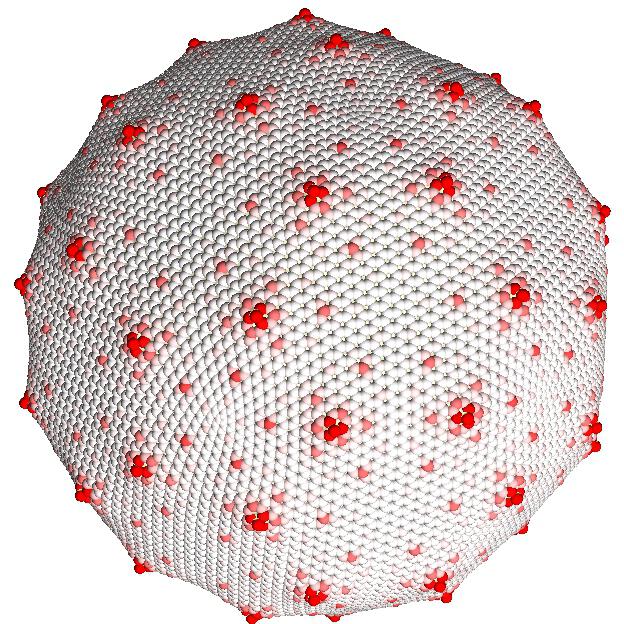}
        &\includegraphics[width=80pt]{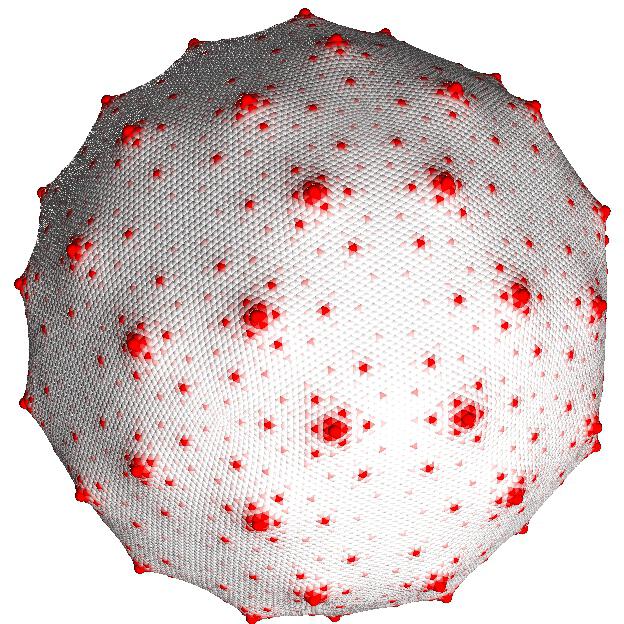}
          &\includegraphics[width=80pt]{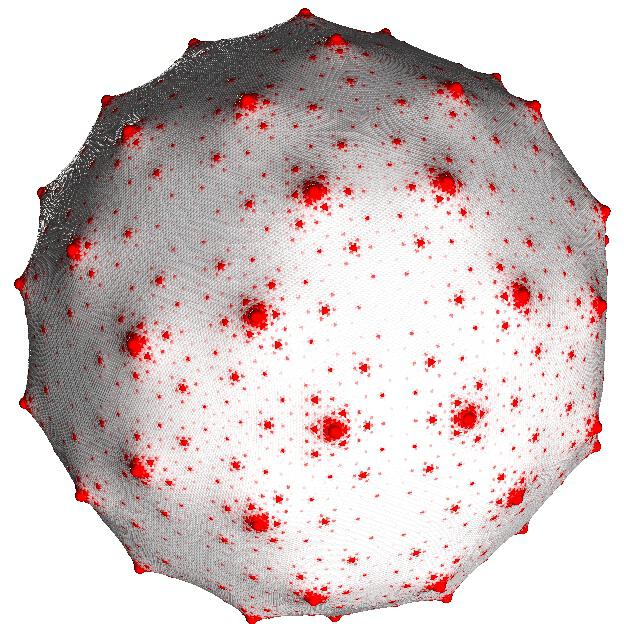}\\
    \hline
    \raisebox{35pt}{\parbox[c]{50pt}{modified\\mean curvature}}
    &\includegraphics[width=80pt]{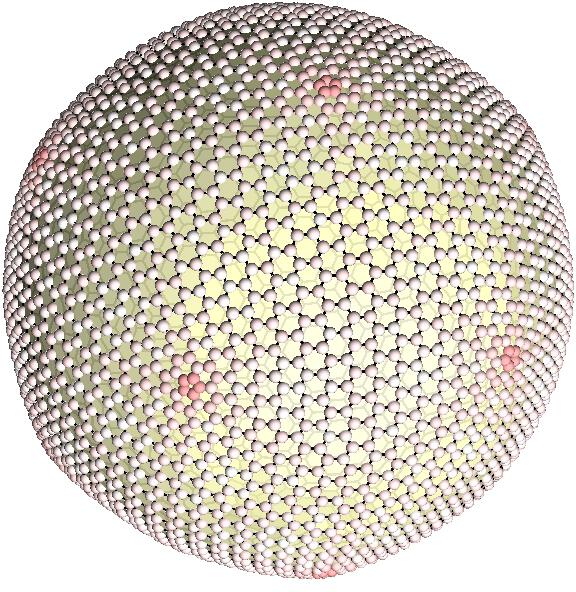}
      &\includegraphics[width=80pt]{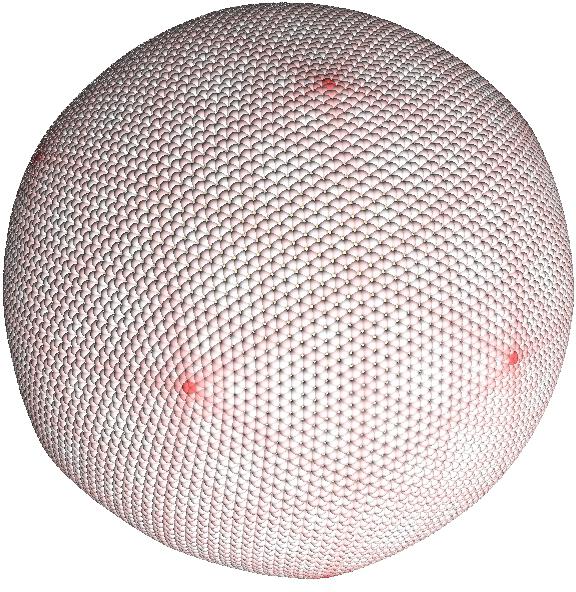}
        &\includegraphics[width=80pt]{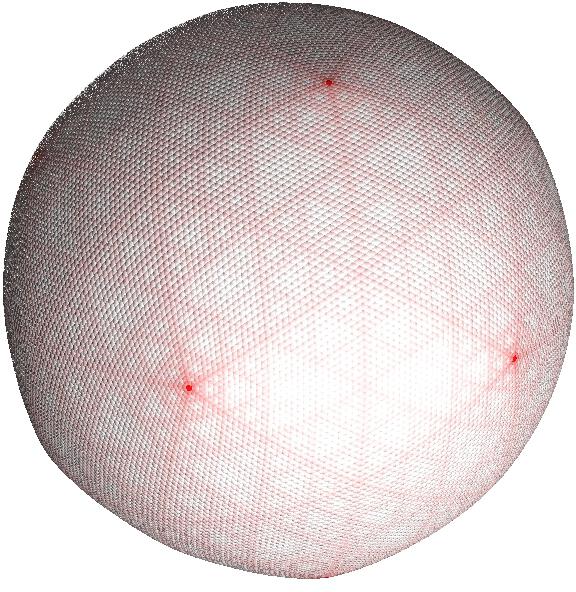}
          &\includegraphics[width=80pt]{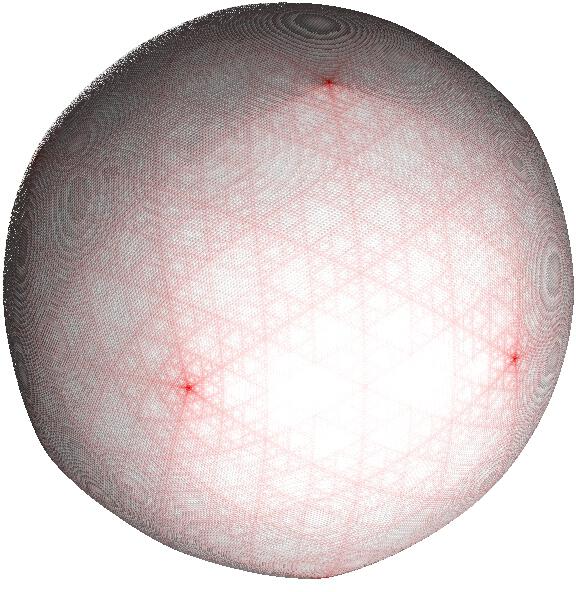}\\
    \mbox{} \\
    \multicolumn{5}{c}{\includegraphics[scale=0.50]{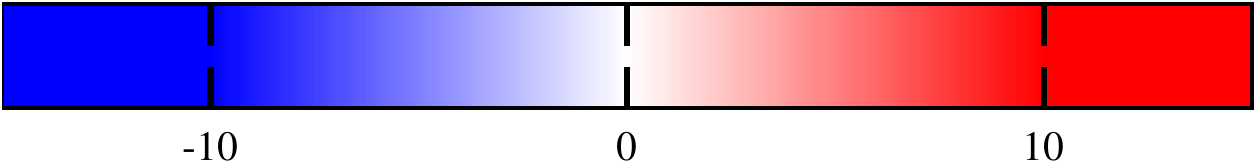}}
  \end{tabular}
  \caption{
    Gauss and mean curvatures of subdivisions for \ce{C60}.
  }
  \label{fig:c60:curvature}
\end{figure}
\begin{figure}[H]
  \centering
  \begin{tabular}{l|cccc}
    &3&4&5&6\\
    \hline
    \raisebox{35pt}{\parbox[c]{50pt}{original\\Gauss curvature}}
    &\includegraphics[width=80pt]{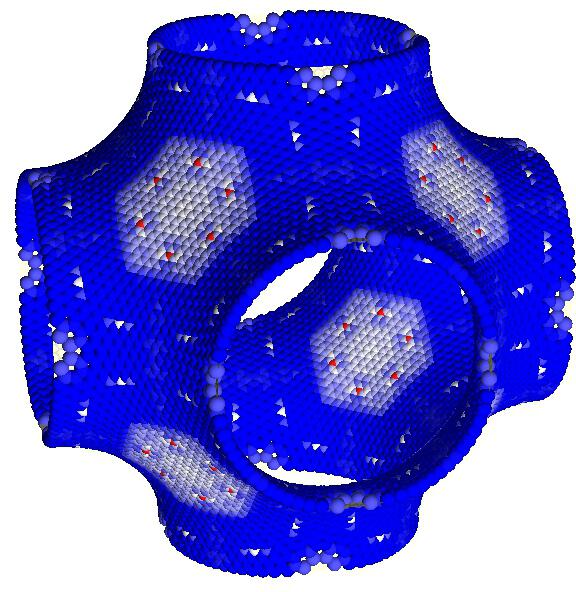}
      &\includegraphics[width=80pt]{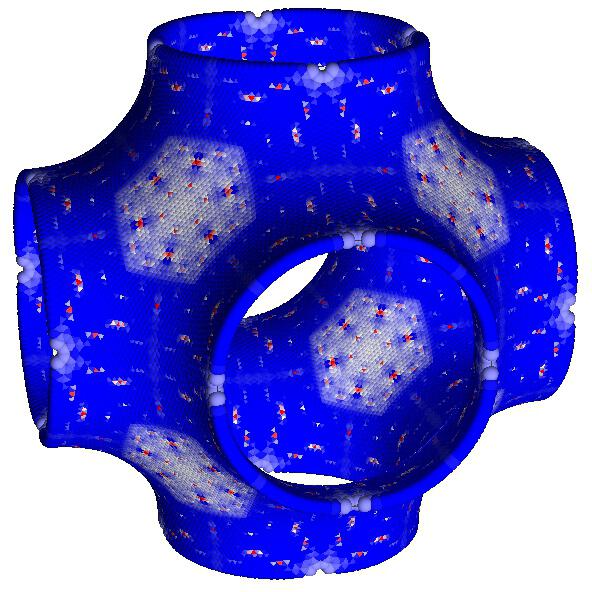}
        &\includegraphics[width=80pt]{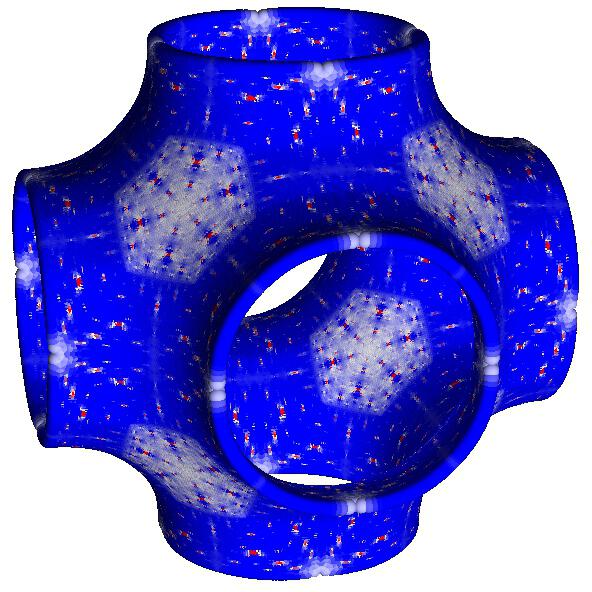}
          &\includegraphics[width=80pt]{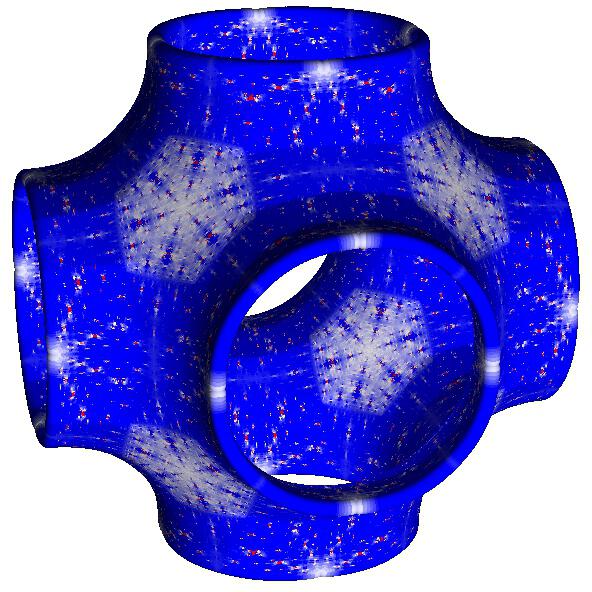}\\
    \hline
    \raisebox{35pt}{\parbox[c]{50pt}{modified\\Gauss curvature}}
    &\includegraphics[width=80pt]{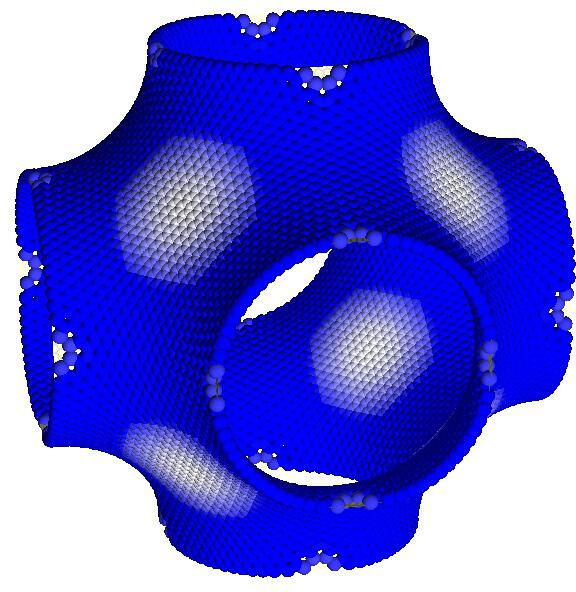}
      &\includegraphics[width=80pt]{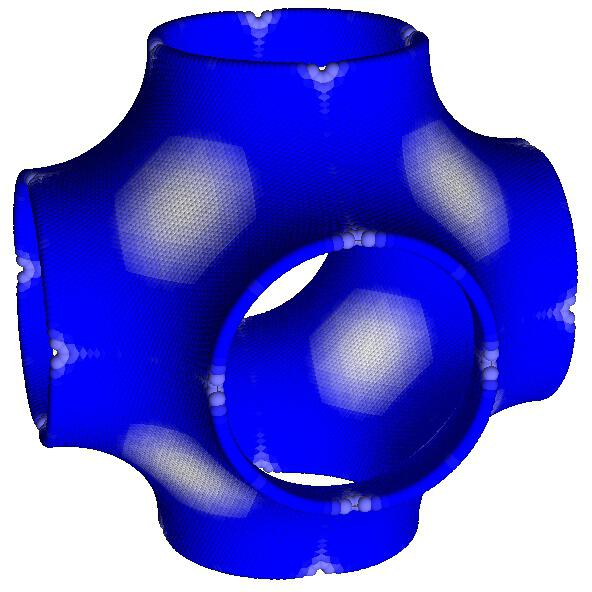}
        &\includegraphics[width=80pt]{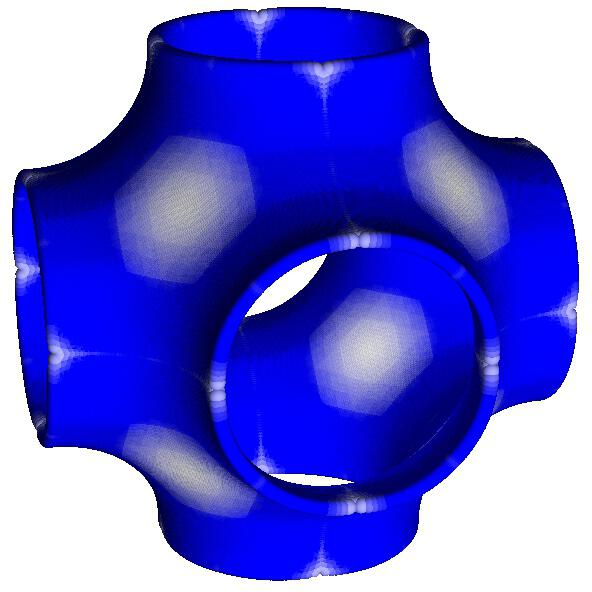}
          &\includegraphics[width=80pt]{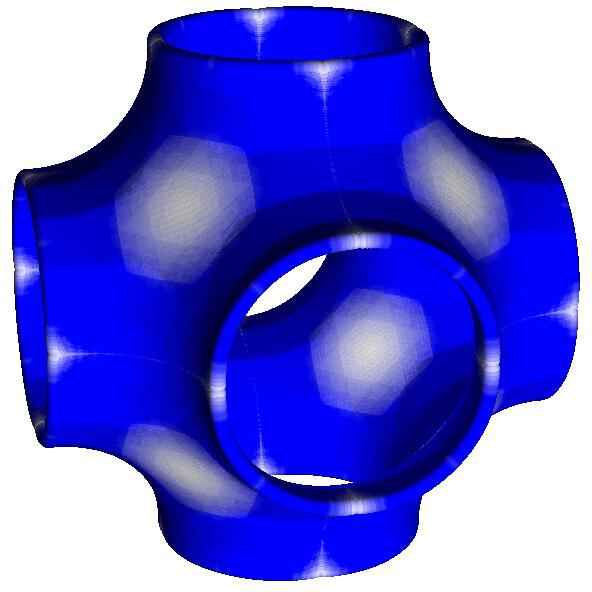}\\
    \hline
    \raisebox{35pt}{\parbox[c]{50pt}{original\\mean curvature}}
    &\includegraphics[width=80pt]{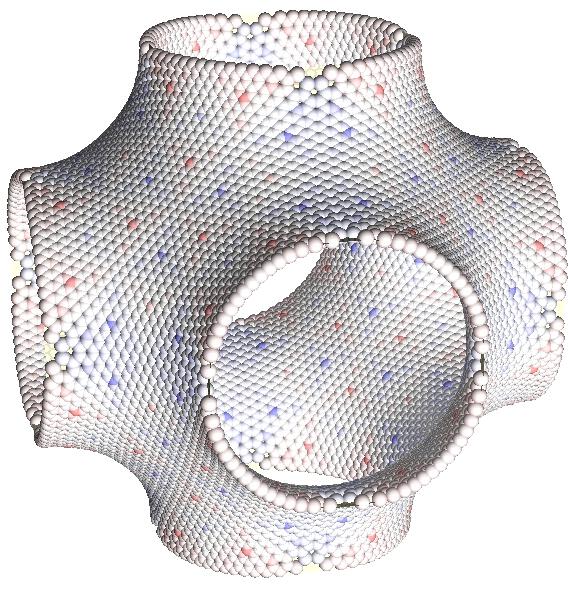}
      &\includegraphics[width=80pt]{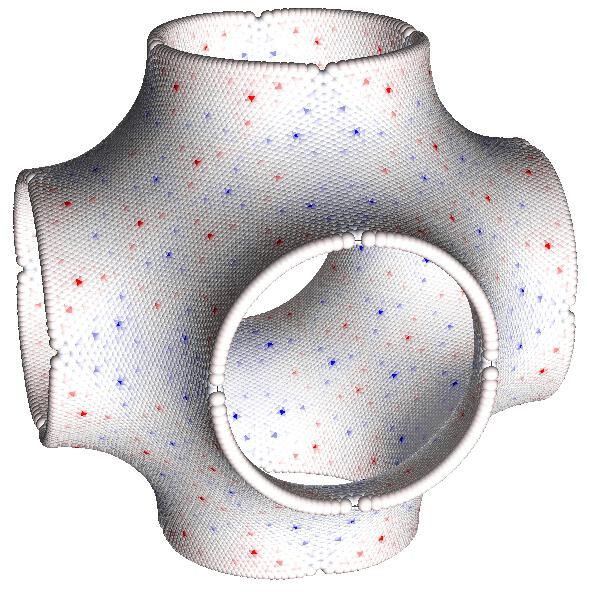}
        &\includegraphics[width=80pt]{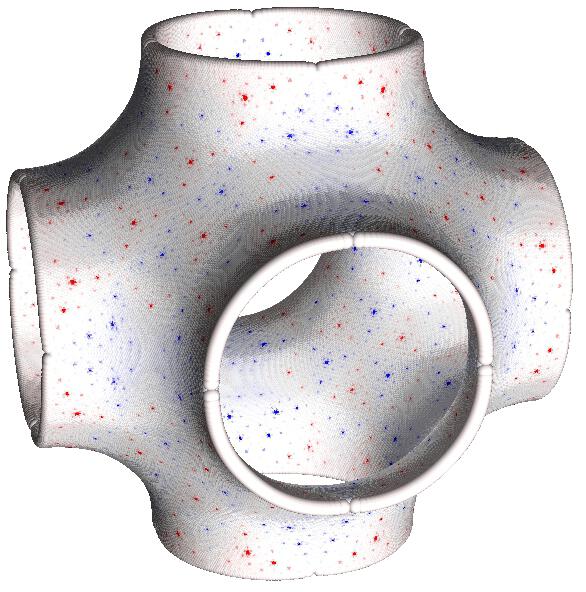}
          &\includegraphics[width=80pt]{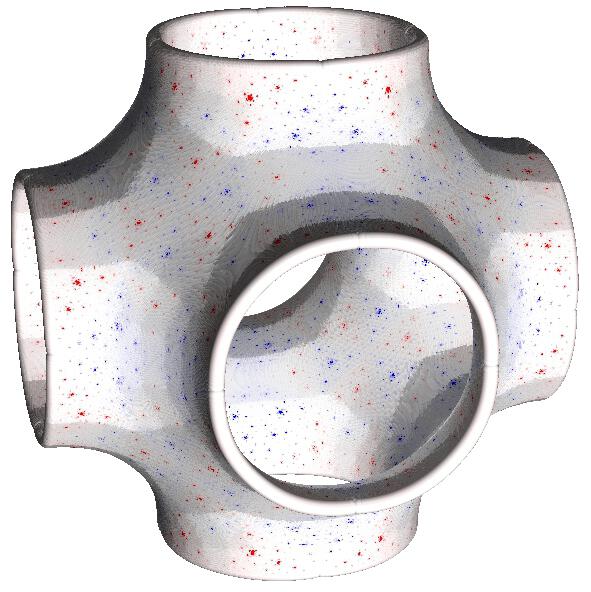}\\
    \hline
    \raisebox{35pt}{\parbox[c]{50pt}{modified\\mean curvature}}
    &\includegraphics[width=80pt]{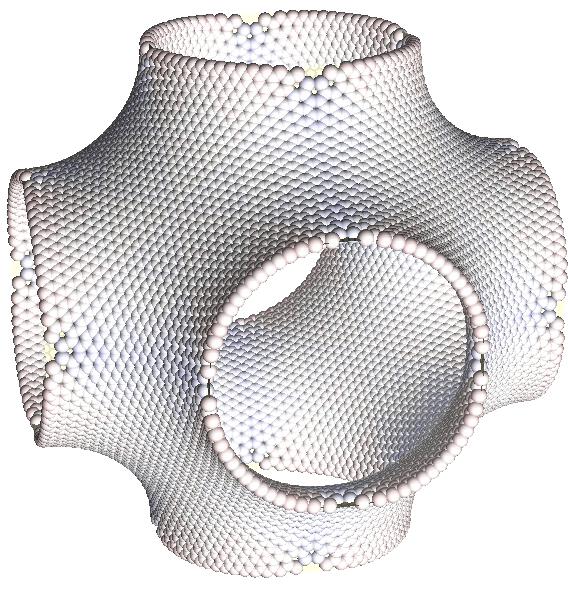}
      &\includegraphics[width=80pt]{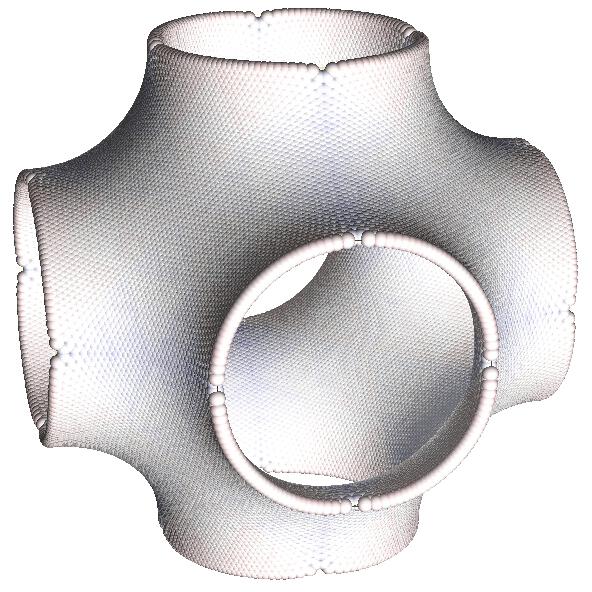}
        &\includegraphics[width=80pt]{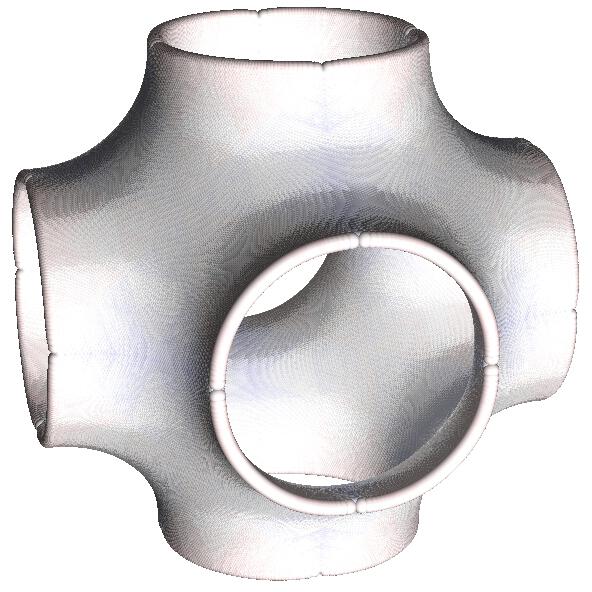}
          &\includegraphics[width=80pt]{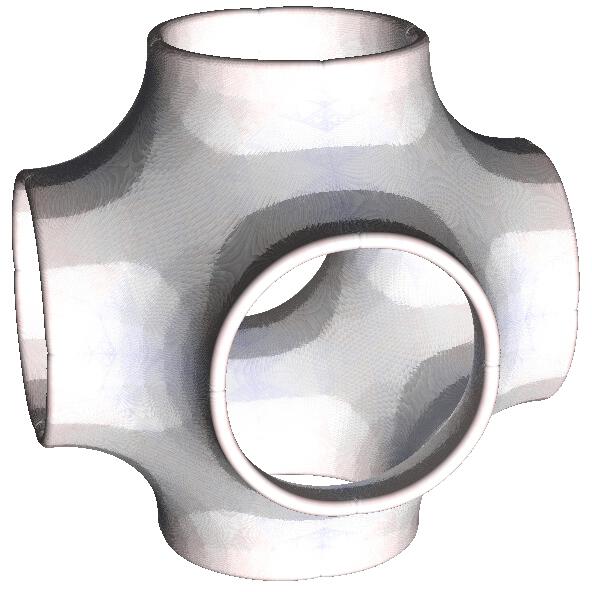}\\
    \mbox{} \\
    \multicolumn{5}{c}{\includegraphics[scale=0.50]{figures/C60-mean-crop.pdf}}
  \end{tabular}
  \caption{
    Gauss and mean curvatures of subdivisions for Mackay crystal.
  }
  \label{fig:mackay:curvature}
\end{figure}
{
  \small
  \begin{table}[H]
    \centering
    \begin{tabular}{l|r|r||r|r}
      \mbox{}&\multicolumn{4}{c}{Gauss curvature for \ce{C60} and its subdivisions}\\
      \mbox{}&\multicolumn{2}{c||}{original}&\multicolumn{2}{c}{modified}\\
      \mbox{}&\multicolumn{1}{c|}{min}&\multicolumn{1}{c||}{max}&\multicolumn{1}{c|}{min}&\multicolumn{1}{c}{max}\\
      \hline
      0&+1.000000000000&+1.000000000000&+1.000000000000&+1.000000000000\\
      1&+0.000000000000&+4.252873545461&+0.525777056286&+1.984363480318\\
      2&+0.000000000000&+23.710805396918&+0.121201828300&+4.635544466166\\
      3&+0.000000000000&+144.213543580835&+0.019088250650&+11.471725490635\\
      4&+0.000000000000&+796.106165831539&+0.004795736307&+29.519631805770\\
      5&+0.000000000000&+4158.979312625750&+0.001553719365&+77.844026602244\\
      6&+0.000000000002&+20183.584874378899&+0.001513497809&+208.379943586661\\
      \hline
      \hline
      \multicolumn{5}{c}{\mbox{}} \\
      \mbox{}&\multicolumn{4}{c}{Absolute values of mean curvature for \ce{C60} and its subdivisions}\\
      \mbox{}&\multicolumn{2}{c||}{original}&\multicolumn{2}{c}{modified}\\
      \mbox{}&\multicolumn{1}{c|}{min}&\multicolumn{1}{c||}{max}&\multicolumn{1}{c|}{min}&\multicolumn{1}{c}{max}\\
      \hline
      0&  +1.000000000000& +1.000000000000	& +1.000000000000& +1.000000000000 \\
      1&  +0.632214023392& +2.065746461116	& +0.736142329795& +1.445952675469 \\
      2&  +0.301253084257& +4.870995934069	& +0.378978641299& +2.226705188612 \\
      3&  +0.147387759935& +12.016186135011	& +0.208167970265& +3.522259721963 \\
      4&  +0.073257722208& +28.216464341791	& +0.140779306392& +5.670917251952 \\
      5&  +0.036573409537& +64.514723533582	& +0.086133053358& +9.230418205413 \\
      6&  +0.001203528409& +142.071641599498	& +0.057571796752& +15.123897080553 \\
    \end{tabular}
    \caption{
      Numerical results of maximum and minimum of Gauss and (absolute values of) mean curvatures 
      of subdivisions for \ce{C60}.
    }
    \label{table:c60:curvature}
  \end{table}
}
{
  \small
  \begin{table}[H]
    \centering
    \begin{tabular}{l|r|r||r|r}
      \mbox{}&\multicolumn{4}{c}{Gauss curvature for Mackay crystals and its subdivisions}\\
      \mbox{}&\multicolumn{2}{c||}{original}&\multicolumn{2}{c}{modified}\\
      \mbox{}&\multicolumn{1}{c|}{min}&\multicolumn{1}{c||}{max}&\multicolumn{1}{c|}{min}&\multicolumn{1}{c}{max}\\
      \hline
      0&  +3.771349862259& +17.666681446413	& +3.771349862259& +17.666681446413 \\
      1&  +0.000000000021& +14.397226459450	& +0.000000000021& +13.119488293420 \\
      2&  +0.000000000000& +16.656461806495	& +0.000000000071& +15.066544032947 \\
      3&  +0.000000000014& +23.082901682419	& +0.000000000142& +16.641060037832 \\
      4&  +0.000000000000& +85.143362076065	& +0.000000000330& +17.437385717505 \\
      5&  +0.000000000000& +601.064421328300	& +0.000000000016& +18.086560932206 \\
      6&  +0.000000000000& +4088.346117693280	& +0.000000001469& +18.572122871784 \\
      \hline
      \hline
      \multicolumn{5}{c}{\mbox{}} \\
      \mbox{}&\multicolumn{4}{c}{Absolute values of mean curvature for Mackay crystals and its subdivisions}\\
      \mbox{}&\multicolumn{2}{c||}{original}&\multicolumn{2}{c}{modified}\\
      \mbox{}&\multicolumn{1}{c|}{min}&\multicolumn{1}{c||}{max}&\multicolumn{1}{c|}{min}&\multicolumn{1}{c}{max}\\
      \hline
      0&  +0.029880403429& +0.586577778209	& +0.029880403429& +0.586577778209 \\
      1&  +0.063943067848& +0.802761087825	& +0.059318491199& +0.833555289165 \\
      2&  +0.014255956693& +1.673683654486	& +0.002132925185& +1.075287177741 \\
      3&  +0.000713636661& +3.949630889326	& +0.000262137958& +1.215025924198 \\
      4&  +0.001540074493& +9.607278290803	& +0.001724570423& +1.350175041239 \\
      5&  +0.000037916564& +24.956905972599	& +0.000114955116& +1.509305677441 \\
      6&  +0.000052056290& +64.679724464465	& +0.000116955793& +1.681630672170 \\
    \end{tabular}
    \caption{
      Numerical results of maximum and minimum of Gauss and (absolute values of) mean curvatures 
      of subdivisions for Mackay crystal.
    }
    \label{table:mackay:curvature}
  \end{table}
}




\end{document}